\numberwithin{equation}{section}
\def\v{\varphi}
\def\Re{{\sf Re}\,}
\def\Im{{\sf Im}\,}
\def\1#1{\overline{#1}}
\def\2#1{\widetilde{#1}}
\def\3#1{\widehat{#1}}
\def\4#1{\mathbb{#1}}
\def\5#1{\frak{#1}}
\def\6#1{{\mathcal{#1}}}
\newcommand{\de}{\partial}
\newcommand{\R}{\mathbb R}
\newcommand{\Ha}{\mathbb H}
\newcommand{\al}{\alpha}
\newcommand{\tv}{\widetilde{\varphi}}
\newcommand{\Z}{\mathbb Z}
\newcommand{\C}{\mathbb C}
\newcommand{\Aut}{{\sf Aut}(\mathbb D)}
\newcommand{\D}{\mathbb D}
\newcommand{\oD}{\overline{\mathbb D}}
\newcommand{\N}{\mathbb N}
\def\Re{{\sf Re}\,}
\def\Im{{\sf Im}\,}
\newcommand{\strip}{\mathbb{S}}
\newcommand{\UD}{\mathbb{D}}
\newcommand{\mcite}[1]{\csname b@#1\endcsname}
\theoremstyle{theorem}
\def\Aut{{\sf Aut}}
\def\Re{{\sf Re}\,}
\def\Im{{\sf Im}\,}
\newtheorem{theorem}{Theorem}[section]
\newtheorem{lemma}[theorem]{Lemma}
\newtheorem{proposition}[theorem]{Proposition}
\newtheorem{corollary}[theorem]{Corollary}
\theoremstyle{definition}
\newtheorem{definition}[theorem]{Definition}
\newtheorem{example}[theorem]{Example}
\theoremstyle{remark}
\newtheorem{remark}[theorem]{Remark}
\numberwithin{equation}{section}
\title[Topological invariants]{Topological invariants for semigroups of holomorphic self-maps of the unit disc}
\author[F. Bracci]{Filippo Bracci$^\dag$}
\address{F. Bracci: Dipartimento di Matematica, Universit\`a di Roma ``Tor Vergata", Via della Ricerca
Scientifica 1, 00133, Roma, Italia.} \email{fbracci@mat.uniroma2.it}
\author[M. D. Contreras]{Manuel D. Contreras$^\ddag$}
\author[S. D\'{\i}az-Madrigal]{Santiago D\'{\i}az-Madrigal$^\ddag$}
\address{M. D. Contreras, S. D\'{\i}az-Madrigal: Camino de los Descubrimientos, s/n\\
Departamento de Matem\'{a}tica Aplicada~II and IMUS\\ Universidad de Sevilla\\ Sevilla,
41092\\ Spain.}\email{contreras@us.es} \email{madrigal@us.es}
\subjclass[2010]{Primary 37C10, 30C35; Secondary 30D05, 30C80, 37F99, 37C25}
\keywords{Semigroups of holomorphic functions; topological invariants; topological normal forms}
\thanks{$^\dag\,$Partially supported by the ERC grant ``HEVO - Holomorphic Evolution Equations'' n. 277691.}
\thanks{$^\ddag$ Partially supported by the \textit{Ministerio
de Econom\'{\i}a y Competitividad} and the European Union (FEDER), project MTM2012-37436-C02-01, and  by \textit{La Consejer\'{\i}a de Educaci\'{o}n y Ciencia de la Junta de Andaluc\'{\i}a}.}
\long\def\REM#1{\relax}
\begin{document}
\maketitle

\selectlanguage{french}
\begin{abstract}
Soient $(\varphi_t)$, $(\phi_t)$ deux semi-groupes \`a un param\`etre d'endomorphismes holomorphes du disque unit\'e $\mathbb D\subset \mathbb C$. Soit $f:\mathbb D \to \mathbb D$ un hom\'eomorphisme. Nous montrons que si $f \circ \phi_t=\varphi_t \circ f$ pour tout $t\geq 0$, alors $f$ s'\'etend \`a un hom\'eomorphisme de $\overline{\mathbb D}$ en dehors des arcs de contact exceptionnels maximaux (en particulier, si l'on consid\`ere des semi-groupes elliptiques, $f$ s'\'etend toujours \`a un homéomorphisme de $\overline{\mathbb D}$). En utilisant ce r\'esultat, nous \'etudions les invariants topologiques pour les semi-groupes \`a un param\`etre d'endomorphismes holomorphes du disque unit\'e. 
\end{abstract}

\selectlanguage{english}
\begin{abstract}
Let $(\varphi_t)$, $(\phi_t)$ be two one-parameter semigroups of holomorphic self-maps of the unit disc $\mathbb D\subset \mathbb C$. Let $f:\mathbb D \to \mathbb D$ be a homeomorphism. We prove that, if $f \circ \phi_t=\varphi_t \circ f$ for all $t\geq 0$, then $f$ extends to a homeomorphism of $\overline{\mathbb D}$ outside exceptional maximal contact arcs (in particular, for elliptic semigroups, $f$ extends to a homeomorphism of $\overline{\mathbb D}$).  Using this result, we study topological invariants for one-parameter semigroups of holomorphic self-maps of the unit disc.
\end{abstract}

\tableofcontents

\section{Introduction}

One-parameter continuous semigroups of holomorphic self-maps  of $\D$ (sometimes just called holomorphic semigroups in $\D$ for short) have been widely studied, see, {\sl e.g.}, \cite{Berkson-Porta, Abate, Shb}. In recent years, particular attention was paid to the boundary behavior of semigroups in $\D$, their boundary trajectories, boundary singularities, and fine properties of semigroups have been discovered and investigated via the associated infinitesimal generators and K\oe nigs functions, see, {\sl e.g.}, \cite{BCD,CD, CD2, CDP, BrGu, P, ES, EST, EKRS, ESZ} and references therein. For instance, the dynamical  type (elliptic, hyperbolic, parabolic), the hyperbolic step, the Denjoy\,--\,Wolff point, boundary (regular or super-repelling) fixed points, regular poles, fractional singularities, maximal contact arcs, can be well understood in terms of the geometry of the image of the  K\oe nigs function and the analytic properties of the infinitesimal generator. 

All these objects are {\sl holomorphic invariants} of a semigroup of holomorphic self-maps. Namely, if two semigroups of holomorphic self-maps  are holomorphically conjugated  through an automorphism of the unit disc, there is a one-to-one correspondence among the objects listed before and the way they are displaced along the boundary of the disc. This follows easily from the fact that automorphisms of the unit disc are linear fractional maps. Therefore, holomorphic invariants form a huge family and each class of holomorphic conjugations is relatively small.  One might expect that lowering the regularity of the conjugation map, the number of invariants decreases. 

In this paper we are interested in studying {\sl topological invariants} of semigroups of holomorphic self-maps of the unit disc. Namely, we consider properties of holomorphic semigroups which are invariant under conjugation via homeomorphisms of the unit disc, {\sl without any assumption on the regularity of the conjugacy map on the boundary of the unit disc}. One might expect that all holomorphic invariants related to the boundary behavior are destroyed. However, and quite surprisingly, most of them survive and are topological invariants. Roughly speaking, the holomorphic invariants which are related to the isometric (with respect to the Poincar\'e metric) nature of holomorphic conjugacies are destroyed under topological conjugation, but, those invariants which are related to the dynamics survive, with the exception of some special maximal contact arcs, which we call {\sl exceptional maximal arcs}.

To be more precise, let us fix some notations. We refer the reader to the next sections for the corresponding definitions. Let $(\v_t)$   be a (continuous $1$-parameter) semigroup of holomorphic self-maps of the unit disc $\D$. If $(\v_t)$ is not a group of elliptic automorphisms, there exists a unique point $x\in \oD$, called the Denjoy\,--\,Wolff point of $(\v_t)$, such that $\lim_{t\to\infty}\v_t(z)=x$ for all $z\in \D$. The semigroup $(\v_t)$ is called {\sl elliptic} if its Denjoy\,--\,Wolff point belongs to $\D$. If $(\v_t)$ is non-elliptic, it is called {\sl parabolic}  if for every $t\geq 0$ the non-tangential limit of $\v_t'$ at its Denjoy\,--\,Wolff point is $1$, otherwise it is called {\sl hyperbolic}. 

While it is clear that elliptic and non-elliptic semigroups of holomorphic self-maps of $\D$ cannot be topologically conjugated, we first prove in Corollary \ref{conjugatohip} that every non-elliptic semigroup of holomorphic self-maps of $\D$ is topologically conjugated to a hyperbolic  semigroup of holomorphic self-maps of $\D$. 

The basic tool for this result and the next ones, is provided by the existence of universal  holomorphic models for semigroups of holomorphic self-maps, as introduced in \cite{Cowen, BrAr}, and the characterization of topological conjugation between two semigroups via topological conjugation of their holomorphic models (see Section \ref{model}). 

In \cite{P}, P. Gumenyuk proved that for every $x\in \de\D$ and for every $t\geq 0$, $\v_t$ has non-tangential limit at $x$, so that $\v_t(x)$ is well-defined, and the curve $[0,+\infty]\ni t\mapsto \v_t(x)\in \oD$ is continuous. Therefore one can define the {\sl life-time} $T(x)$ of $x\in \de \D$ on the boundary to be the supremum of $t\in [0,+\infty]$ such that $\v_t(x)\in \de \D$ (see Section \ref{max-tra}). If $T(x)>0$, it follows from \cite{BrGu} that either $x$ is a fixed point of the semigroup (and in such a case $T(x)=\infty$) or $x$ belongs to a {\sl maximal contact arc}, that is, $A$ is a non empty open arc in $\de \D$ maximal with respect to the property that the infinitesimal generator of $(\v_t)$ extends holomorphically through $A$, never vanishes and it is tangent to $\de A$.  If $A$ is a maximal contact arc and $T(x)=\infty$, then we call $A$ an {\sl exceptional maximal contact arc}. In this case, the ending point of $A$ (with respect to the natural orientation given by the curve $t\mapsto \v_t(x)$), is the Denjoy\,--\,Wolff point of $(\v_t)$. In particular, if $(\v_t)$ is elliptic, it does not admit exceptional maximal contact arcs. A semigroup has at most two exceptional maximal contact arcs. 

In Proposition \ref{time-cont-max}, we prove that the life-time  is a continuous function on each maximal contact arc and we use it to characterize the starting point of a maximal contact arc. 

In case $x\in\de\D$ is a fixed point for $(\v_t)$ which is super-repelling (or non-regular) we show, as a consequence of  Proposition \ref{compon-bound}, that  $x$ is either the starting point of a maximal contact arc or there exists a backward orbit of $(\v_t)$ landing at $x$.  

Let us denote by $M(\v_t)\subset \de\D$  the set of points which belong to an exceptional maximal contact arc for $(\v_t)$  (note that $M(\v_t)$ is open and possibly empty). Let $x\in \oD$ be the Denjoy\,--\,Wolff point of $(\v_t)$ and let
\[
E(\v_t):=\overline{M(\v_t)\cup \{x\}}\cap \de \D.
\]
Note that $E(\v_t)=\emptyset$ if $x\in \D$ while $\{x\}\subseteq E(\v_t)$ if $x\in \de \D$.
We can now state the main result of this paper:
 
\begin{theorem}\label{main}
Let $(\v_t), (\phi_t)$ be two semigroups of holomorphic self-maps of $\D$, which are not elliptic groups. Let $x\in \oD$ be the Denjoy\,--\,Wolff point of $(\v_t)$ and $y\in \oD$  the Denjoy\,--\,Wolff point of $(\phi_t)$. Assume that $f:\D \to \D$ is a homeomorphism (no regularity on $\de\D$ is assumed) such that $f \circ \phi_t =\v_t\circ f$ for all $t\geq 0$. Then $f$ extends to a homeomorphism from $\oD\setminus E(\phi_t)$ to $\oD\setminus E(\v_t)$.

 Moreover, for all  $p\in \de \D\setminus E(\phi_t)$ it holds $T(p)=T(f(p))$ and 
 \[
 f(\phi_t(p))=\v_t(f(p))\quad \hbox{for all $t\geq 0$}.
 \]
  
Finally,  if $x\in \de \D$, then $y\in \de \D$ and $\angle\lim_{z\to x}f(z)=y$.
\end{theorem}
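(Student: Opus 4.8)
The plan is to pass to the universal holomorphic models, in which the semigroups become translations, reduce the statement to a rigidity property of a homeomorphism commuting with a translation, and transport the conclusion back to $\D$ through Carath\'eodory's prime end theory.

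\emph{Reduction and structure of the conjugacy.} Let $(\widehat\Omega_\v,(\sigma_t),\ell_\v)$ and $(\widehat\Omega_\phi,(\sigma_t),\ell_\phi)$ be the universal holomorphic models of $(\v_t)$ and $(\phi_t)$. Since neither semigroup is an elliptic group, in the non-elliptic case $\widehat\Omega_\v,\widehat\Omega_\phi$ are, after an affine normalization, horizontal strips (possibly degenerating to a half-plane or to $\C$), $\sigma_t$ is $w\mapsto w+t$, and $\ell_\v,\ell_\phi$ are the univalent K\oe nigs functions; in the elliptic non-group case $\widehat\Omega_\v=\widehat\Omega_\phi=\C$ and $\sigma_t(w)=e^{\mu t}w$ with $\Re\mu<0$. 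By the characterization of topological conjugacy through the models (Section~\ref{model}), $f$ induces a homeomorphism $F\colon\widehat\Omega_\phi\to\widehat\Omega_\v$ with $F\circ\sigma_t=\sigma_t\circ F$ and $F\circ\ell_\phi=\ell_\v\circ f$; injectivity of the intertwining maps gives $F(\Omega_\phi)=\Omega_\v$, where $\Omega_\bullet:=\ell_\bullet(\D)$, and $f=\ell_\v^{-1}\circ F\circ\ell_\phi$. When $\widehat\Omega_\phi=\C$ (in particular when the semigroups are elliptic), $F$ is a homeomorphism of $\C$, hence extends to one of $\widehat\C$ fixing $\infty$; it therefore carries crosscuts of $\Omega_\phi$ to crosscuts of $\Omega_\v$ and null-chains to null-chains, induces a homeomorphism of the prime end compactifications, and, composed with the Carath\'eodory extensions of $\ell_\phi,\ell_\v$, yields an extension of $f$ to a homeomorphism of $\oD$ onto $\oD$ — which is everything, since then $E(\phi_t),E(\v_t)$ are $\emptyset$ or a single (Denjoy--Wolff) point. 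In the remaining case $\widehat\Omega_\phi=\{w:\Im w\in J_\phi\}$ is a strip or half-plane, and $F(w+t)=F(w)+t$ forces $F(x+iy)=(x+a(y))+i\,v(y)$ with $v\colon J_\phi\to J_\v$ a monotone homeomorphism and $a\colon J_\phi\to\R$ continuous. Thus $F$ maps each horizontal line $\{\Im w=y\}$ onto $\{\Im w=v(y)\}$ by a horizontal translation, and it is uniformly continuous (for the spherical metric, as it extends to $\infty$ along such strips) on every closed strip $\{|\Im w-y_\ast|\le M\}$ with $[y_\ast-M,y_\ast+M]\subset J_\phi$, mapping it onto a strip of the same kind over $J_\v$.

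\emph{The dictionary for $E$ (the main obstacle).} Identify $\de\D$ with the space of prime ends of $\Omega_\phi$ through the Carath\'eodory extension of $\ell_\phi$, and likewise for $\v$. The heart of the proof is the following reformulation of the definition of $E$: a prime end $\xi$ of $\Omega_\phi$ satisfies $\xi\notin\ell_\phi(E(\phi_t))$ if and only if it admits a null-chain of crosscuts contained in some closed strip $\{|\Im w-y_\ast|\le M\}$ with $[y_\ast-M,y_\ast+M]\subset J_\phi$; equivalently, $\xi\in\ell_\phi(E(\phi_t))$ exactly when every null-chain defining $\xi$ is forced to accumulate on $\de\widehat\Omega_\phi$. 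Establishing this is where the boundary theory of the earlier sections is needed: one uses that $\Omega_\phi=\{w:\Re w>\gamma_\phi(\Im w)\}$ for an upper semicontinuous $\gamma_\phi$, that an exceptional maximal contact arc corresponds to a horizontal boundary ray (or line) of $\Omega_\phi$ at a height in $\de J_\phi$ running off to the Denjoy--Wolff prime end, that the Denjoy--Wolff point is the unique end of $\Omega_\phi$ ``at $+\infty$'', and the descriptions of the life-time and of super-repelling boundary fixed points from Propositions~\ref{time-cont-max} and~\ref{compon-bound}, to check that the prime ends of $\ell_\phi(E(\phi_t))$ are precisely the points of the exceptional rays, the Denjoy--Wolff end, and their limits — exactly the prime ends whose null-chains must be pushed down to $\de\widehat\Omega_\phi$ — while non-exceptional contact arcs, boundary fixed points with a backward orbit, and all other boundary points sit at interior heights and admit null-chains confined to a strip away from $\de\widehat\Omega_\phi$. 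Matching all the fine boundary geometry of semigroups with the geometry of the model in this way is, I expect, the bulk of the work.

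\emph{Extension, life-time, commutation on $\de\D$.} Granting the dictionary, take $\xi\notin\ell_\phi(E(\phi_t))$ and a null-chain $(C_n)$ defining it inside a strip $S$ disjoint from $\de\widehat\Omega_\phi$. Uniform continuity of $F$ on $S$ gives $\diam F(C_n)\to0$, so $(F(C_n))$ is a null-chain of $\Omega_\v$ inside the strip $F(S)$, again disjoint from $\de\widehat\Omega_\v$; by the dictionary on the $\v$-side it defines a prime end $F_\ast(\xi)\notin\ell_\v(E(\v_t))$. Repeating the argument with $F^{-1}$, $\xi\mapsto F_\ast(\xi)$ is a homeomorphism between the prime ends outside $E$, which, transported through the Carath\'eodory extensions, is exactly an extension of $f$ to a homeomorphism $\oD\setminus E(\phi_t)\to\oD\setminus E(\v_t)$. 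For $p\in\de\D\setminus E(\phi_t)$, applying the extended $\ell_\v^{-1}$ to $F(\ell_\phi(\phi_t(p)))=F(\ell_\phi(p))+t=\ell_\v(f(p))+t$ gives $f(\phi_t(p))=\v_t(f(p))$; and since $F$ acts on each horizontal line by a horizontal translation, it maps the boundary segment $\{\ell_\phi(p)+s:0\le s\le T(p)\}$ of $\de\Omega_\phi$ onto $\{\ell_\v(f(p))+s:0\le s\le T(p)\}\subset\de\Omega_\v$, isometrically in the real direction, whence $T(f(p))=T(p)$ (trivially so when $T(p)\in\{0,\infty\}$).

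\emph{The Denjoy--Wolff point.} If $x\in\de\D$ then $(\v_t)$, hence $(\phi_t)$, is non-elliptic, so $y\in\de\D$. A Stolz region at $y$ is mapped by $\ell_\phi$ into a subregion of $\Omega_\phi$ escaping to the Denjoy--Wolff (``$+\infty$'') end with $\Im w$ staying in a compact subinterval of $J_\phi$: being a non-tangential approach it remains bounded away from every boundary arc of $\de\Omega_\phi$, hence from the exceptional rays and from the graph of $\gamma_\phi$ near $\de J_\phi$. Applying $F$ and using $F(w+t)=F(w)+t$ — so that $F$ is proper towards $+\infty$ on strips of bounded height — the image escapes to the $+\infty$ end of $\Omega_\v$; as $\Omega_\v$ has a single end at $+\infty$, composing with $\ell_\v^{-1}$ shows that $f$ sends a Stolz region at $y$ into a region shrinking to the Denjoy--Wolff point $x$ of $(\v_t)$. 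Thus $\anglim_{z\to y}f(z)=x$, which is the final assertion of the theorem (with $y$ the Denjoy--Wolff point of the domain semigroup $(\phi_t)$ and $x$ that of the range semigroup $(\v_t)$).
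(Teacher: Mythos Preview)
Your proposal is correct and follows essentially the same route as the paper: pass to the holomorphic model, determine the rigid form of the model conjugacy (your $F(x+iy)=(x+a(y))+iv(y)$ is precisely Lemma~\ref{taucon} after swapping real and imaginary parts), characterize $\partial\D\setminus E$ as the prime ends admitting a null chain confined to a compact substrip (this ``dictionary'' is exactly Proposition~\ref{prime-exc} and its condition~(L)), and then transport null chains via uniform continuity of $F$ on such substrips (Proposition~\ref{estensionebuona}), with the Denjoy--Wolff angular limit handled separately as in Proposition~\ref{inizio-rego}. You also correctly spotted and resolved the $x\leftrightarrow y$ swap in the final assertion.
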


The previous theorem for the non-elliptic case is the content of Proposition \ref{estensionebuona} and Proposition \ref{inizio-rego}. The way to adapt the proofs for the elliptic case is  sketched  in Section \ref{passtoelliptic}. The proof is based on the combined use of the universal holomorphic model and Carath\'eodory's prime ends topology.

With Theorem \ref{main} at hands, it is quite easy to prove that most of the objects described before are topological invariants. These are described in Section \ref{tinvariants}. We summarize here such results:

\begin{proposition}
The following are topological invariants for semigroups of holomorphic self-maps of $\D$:
\begin{enumerate}
\item fixed points in $\D$,
\item boundary regular fixed points which do not start exceptional maximal contact arcs,
\item super-repelling fixed points which start  maximal, non exceptional, contact arcs,
\item super-repelling fixed points  having  backward orbits of $(\v_t)$ landing at such points,
\item maximal contact arcs which are not exceptional,
\item life time on the boundary of a boundary point,
\item boundary points of continuity for a semigroup, {\sl i.e.}, points on $\de \D$ such that every element of the semigroup has unrestricted limits at such points.
\end{enumerate}
\end{proposition}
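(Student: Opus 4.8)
The plan is to deduce all seven items from Theorem~\ref{main}, by transporting the relevant structure through the boundary extension it provides and using as black boxes the structural facts recalled in the Introduction, namely Proposition~\ref{time-cont-max}, Proposition~\ref{compon-bound} and the results of \cite{BrGu,P}. So fix two semigroups $(\phi_t)$, $(\v_t)$ and a homeomorphism $f:\D\to\D$ with $f\circ\phi_t=\v_t\circ f$ for all $t\ge 0$. Since having an interior fixed point is a homeomorphism-invariant property of $\D$, the cases ``$(\v_t)$ elliptic'' and ``$(\v_t)$ non-elliptic'' cannot be mixed, and by Corollary~\ref{conjugatohip} together with the reduction sketched in Section~\ref{passtoelliptic} it suffices to argue in the setting of Theorem~\ref{main}. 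Still writing $f$ for its extension, $f$ is then a homeomorphism $\oD\setminus E(\phi_t)\to\oD\setminus E(\v_t)$ which restricts to a homeomorphism $\de\D\setminus E(\phi_t)\to\de\D\setminus E(\v_t)$, intertwines the boundary actions there, preserves the life-time function, and sends the Denjoy--Wolff point of $(\phi_t)$ to that of $(\v_t)$ (through its angular limit when the latter lies on $\de\D$). Items (1) and (6) are then immediate: for (1), $f$ is a bijection of $\D$ intertwining the two semigroups, hence it identifies their interior fixed-point sets; and (6) is exactly the ``moreover'' part of Theorem~\ref{main}, the life-time being identically $+\infty$ on $E(\phi_t)$ and $E(\v_t)$ by the very description of these sets.

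For (5), let $A$ be a maximal contact arc of $(\phi_t)$ which is not exceptional. Its points are non-fixed and have finite positive life-time, so $A$ is disjoint from the (at most two) exceptional arcs, from their starting points, and from the Denjoy--Wolff point; hence $A\subset\de\D\setminus E(\phi_t)$. Since $f$ intertwines the boundary actions and preserves $T$, the image $f(A)$ consists of non-fixed boundary points of $(\v_t)$ of finite positive life-time, so by \cite{BrGu} every point of $f(A)$ lies in a maximal contact arc of $(\v_t)$; as $f(A)$ is a connected subarc of $\de\D$ and distinct maximal contact arcs are disjoint open arcs, $f(A)$ lies inside a single such arc $A'$. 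Applying the same reasoning to $f^{-1}$ and using maximality gives $f^{-1}(A')\subset A$, whence $f(A)=A'$; and $A'$ is not exceptional since $T<\infty$ on it. Note also that $f$ matches the natural orientations of $A$ and $A'$ (given respectively by $t\mapsto\phi_t$ and $t\mapsto\v_t$), because it intertwines the boundary actions; in particular $f$ carries the starting point of $A$, when it lies outside $E(\phi_t)$, to the starting point of $A'$.

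For the fixed-point items (2)--(4) one first observes that a boundary fixed point of $(\phi_t)$ other than the Denjoy--Wolff point belongs to $\de\D\setminus E(\phi_t)$ unless it starts an exceptional maximal contact arc, the points of $E(\phi_t)$ being exactly the Denjoy--Wolff point, the exceptional arcs, and their starting points (the Denjoy--Wolff point itself is regular and never starts an exceptional arc, and is handled by the last assertion of Theorem~\ref{main}); and for such a $p$, $f(p)$ is again a boundary fixed point of $(\v_t)$ in $\de\D\setminus E(\v_t)$, since $f$ intertwines the boundary actions and is injective, and $T(f(p))=T(p)=+\infty$ together with \cite{BrGu} and $f(p)\notin E(\v_t)$ forces $f(p)$ to be a fixed point. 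Within the set of boundary fixed points lying in $\de\D\setminus E$, the relevant trichotomy is: regular points; super-repelling points that start a maximal contact arc (necessarily non-exceptional, as they lie outside $E$); super-repelling points that are landing points of some backward orbit of the semigroup in $\D$. By Proposition~\ref{compon-bound} every super-repelling point is of one of the latter two kinds. Now $f$ preserves ``starting a non-exceptional maximal contact arc'' by part (5) together with the orientation matching, and the starting point of a maximal contact arc is always super-repelling; this yields (3). And $f$ preserves ``being a landing point of a backward orbit of the semigroup in $\D$'', because if $\gamma\subset\D$ is a backward orbit of $(\phi_t)$ landing at $p$ then $f\circ\gamma$ is a backward orbit of $(\v_t)$ landing at $f(p)$, using that $f$ is continuous at $p$ (as $p\notin E(\phi_t)$). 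The point that still has to be pinned down --- and I expect this to be the main obstacle --- is that $f$ cannot send a regular fixed point to a super-repelling one, equivalently that the ``regular'' class and the ``super-repelling with a backward orbit'' class are not interchanged; this does not follow from Proposition~\ref{compon-bound} alone and must use the finer description of the local dynamics near a boundary fixed point recalled in the preliminary sections, to the effect that a regular repelling fixed point behaves as a genuine (topological) source, so that the arrangement of backward orbits and of the local basins near it is homeomorphically distinguishable from the degenerate picture near a super-repelling fixed point. Granting this, (2)--(4) follow by applying all of the above symmetrically to $f$ and $f^{-1}$.

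Finally, for (7): if $p\in\de\D$ is a boundary point of continuity of $(\phi_t)$ lying in $\de\D\setminus E(\phi_t)$, then $f$ is continuous at $p$, and since the whole forward orbit $\{\phi_t(p):t\ge 0\}$ stays in $\oD\setminus E(\phi_t)$ --- a non-exceptional maximal contact arc cannot end at the Denjoy--Wolff point, so its orbits reach a non-fixed detaching point and then enter $\D$, while orbits of interior or of fixed points stay where they are --- the map $f$ is also continuous at each $\phi_t(p)$; thus on a neighbourhood of $f(p)$ in $\oD\setminus E(\v_t)$ one may write $\v_t=f\circ\phi_t\circ f^{-1}$, a composition of maps continuous at the relevant points, so $\v_t$ has an unrestricted limit at $f(p)$. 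The symmetric argument applied to $f^{-1}$ gives the converse, so the set of boundary points of continuity lying in $\de\D\setminus E$ is transported by $f$; as in the other items, the points of continuity inside $E$ belong to the part of the boundary structure that is not preserved. The main difficulties throughout are therefore the careful bookkeeping of the exceptional set $E$ --- making sure that each point and each arc to be moved actually lies outside it --- and, for (2)--(4), the rigidity of the regular versus super-repelling dichotomy under a mere homeomorphism.
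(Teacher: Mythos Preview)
Your treatment of items (1), (5), (6), (7) is essentially what the paper does: transport everything through the boundary extension of Theorem~\ref{main} and use the intertwining and life-time preservation. That part is fine.

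The gap you yourself flag in (2)--(4) is real, and your proposed fix (``a regular repelling fixed point behaves as a genuine topological source'') is not how the paper proceeds and would be hard to make rigorous with only a homeomorphism of $\D$. The paper instead works directly in the model. By Lemma~\ref{taucon} the conjugation $\tau:\Omega_1\to\Omega_2$ has the form $\tau(x+iy)=u(x)+i(y+v(x))$, hence it sends vertical lines to vertical lines and vertical strips to vertical strips. One then checks that $\tau$ maps the set $A(\phi_t)=\bigcap_{t\ge0}(Q_1+it)$ onto $A(\v_t)$, and therefore induces a bijection between their connected components that preserves ``having nonempty interior''. By Proposition~\ref{compon-bound}, components with nonempty interior correspond to boundary regular fixed points and single-line components correspond to super-repelling points of first type; this is precisely the regular/super-repelling separation you were missing. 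Super-repelling of second type then follows by elimination.

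Separately, your sentence ``the starting point of a maximal contact arc is always super-repelling'' is false: by Theorem~\ref{arcomass}(3)(i) the initial point of a maximal contact arc may be a contact (non-fixed) point, and even when it is a fixed point it can be regular (cf.\ Proposition~\ref{inizio-rego}). So (3) cannot be obtained from (5) plus orientation matching alone; you still need the model-domain argument above to know that $f(p)$ is super-repelling and not regular.
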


In Section \ref{exceptionalb} we study the behavior of topological conjugations on exceptional maximal contact arcs and the Denjoy\,--\,Wolff point. In particular, in Proposition \ref{inizio-rego} we show that the topological conjugacy map always has the non-tangential limit at a boundary regular fixed point which is the initial point of an exceptional maximal contact arc and such a limit is  a boundary regular fixed point (possibly the Denjoy\,--\,Wolff point). However, as shown with several examples, the unrestricted limits at such points and at the Denjoy\,--\,Wolff point  might fail to exist, and exceptional maximal contact arcs can be mapped entirely to the Denjoy\,--\,Wolff point.  

Finally,  in Section \ref{passtoelliptic}, we sketch how to extend these results to the case of  elliptic semigroups.

\section{Preliminaries}

\subsection{Contact and fixed  points}
For the unproven statements, we refer the reader to, {\sl e.g.}, \cite{Abate}, \cite{CMbook} or \cite{Shb}.

Let $f:\D \to \D$ be holomorphic, $x\in \de \D$, and let
\[
\al_x(f):=\liminf_{z\to x} \frac{1-|f(z)|}{1-|z|}.
\]
From Julia's lemma it follows that $\al_x(f)>0$. The number $\al_x(f)$ is called the {\sl boundary dilatation coefficient} of $f$.

If $f:\D \to \C$ is a map and $x\in \de\D$, we write $\angle\lim_{z\to x}f(z)$ for the non-tangential (or angular) limit of $f$ at $x$. If this limit exists and no confusion arises, we  denote its value simply by~$f(x)$.

\begin{definition}
Let $f:\D \to \D$ be holomorphic. A point $x\in \de \D$ is said to be a \textsl{contact point} (respectively, \textsl{boundary fixed point}) of~$f$, if $f(x):=\angle \lim_{z\to x}f(z)$ exists and belongs to~$\de\D$ (respectively, coincides with~$x$). If in addition
\begin{equation}\label{EQ_reg-contact-point}
\al_f(x)<+\infty,
\end{equation}
then $x$ is called a \textsl{regular contact point} (respectively, \textsl{boundary regular fixed point}) of $f$.

A boundary fixed point which is not regular is said to be \textsl{super-repelling}.
\end{definition}
\begin{remark}
By the Julia\,--\,Wolff\,--\,Carath\'eodory theorem, condition~\eqref{EQ_reg-contact-point} in the above definition is sufficient on its own for~$x\in\partial \D$ to be a regular contact point of~$f$.
\end{remark}

If $f:\D\to\D$ is holomorphic, neither the identity nor an elliptic automorphism, by the Denjoy\,--\,Wolff theorem, there exists a unique point $x\in\oD$, called the {\sl Denjoy\,--\,Wolff point} of $f$, such that $f(x)=x$ and the sequence of iterates $\{f^{\circ k}\}$ converges uniformly on compacta of $\D$ to the constant map $z\mapsto x$. Moreover, if $x\in \de\D$ then $x$ is a boundary regular fixed point of $f$ and $\al_f(x)\in (0,1]$.

\subsection{Semigroups}

A (one-parameter) semigroup $(\phi_t)$ of holomorphic self-maps of~$\UD$ is a continuous homomorphism $t\mapsto \phi_t$ from the
additive semigroup $(\R_{\ge0}, +)$ of non-negative real numbers to the
semigroup $({\sf Hol}(\D,\D),\circ)$ of  holomorphic self-maps
of $\D$ with respect to composition, endowed with the
topology of uniform convergence on compacta.

It is known that if $(\phi_t)$ is a semigroup of holomorphic self-maps of~$\UD$ and ${\phi_{t_0}}$ is an automorphism of $\D$ for some~$t_0>0$, then $(\phi_t)$ can be extended to a  group in~$\Aut(\D)$.

\begin{definition}
A {\sl boundary (regular) fixed point}  for a  semigroup
$(\phi_t)$ of holomorphic self-maps of $\D$ is a point $x\in \de \D$ which is a boundary (regular) fixed point of~$\phi_t$ for all~$t>0$.
\end{definition}

Let $(\phi_t)$ be a  semigroup
 of holomorphic self-maps of $\D$. It is well known that  $\phi_{t_0}$ has a fixed point in $\D$ for some $t_0>0$ if and only if there exists $x\in \D$ such that $\phi_t(x)=x$  for all $t\geq 0$. In such a case, the semigroup is called {\sl elliptic}. Moreover, there exists $\lambda\in \C$ with $\Re \lambda\leq 0$ such that $\phi_t'(x)=e^{\lambda t}$ for all $t\geq 0$. The elliptic semigroup $(\phi_t)$ is a group if and only if $\Re \lambda=0$. The number $\lambda$ is called the {\sl spectral value} of the elliptic semigroup.
 
If the semigroup $(\phi_t)$ is not elliptic, then there exists a unique $x\in \de \D$ which is the Denjoy\,--\,Wolff point of $\phi_t$ for all $t>0$.  Moreover, there exists $\lambda\leq 0$, the {\sl dilation of $(\phi_t)$}, such that
\[
\alpha_{\phi_t}(x)=e^{\lambda t}\quad t\geq 0.
\]
A non-elliptic semigroup is said {\sl hyperbolic} if its dilation is non-zero, while it is said {\sl parabolic} if the dilation is $0$. 
 
It is also known (see, \cite[Theorem 1]{CDP}, \cite[Theorem 2]{CDP2}, \cite[pag. 255]{Siskakis-tesis}, \cite{ES}) that  a point $x\in \de \D$ is a boundary (regular) fixed point of $\phi_{t_0}$ for some $t_0>0$ if and only if it is a boundary (regular) fixed point of $\phi_t$ for all $t\geq 0$.

By Berkson and Porta's theorem \cite[Theorem~(1.1)]{Berkson-Porta}, if $(\phi_t)$
is a  semigroup in of holomorphic self-maps of $\D$, then $t\mapsto \phi_t(z)$
is real-analytic and there exists a unique holomorphic vector field
$G:\D\to \C$ such that
\[
\frac{\de \phi_t(z)}{\de
t}=G(\phi_t(z))\qquad\text{for all $z\in\UD$ and all~$t\ge0$}.
\]
This vector field $G$, called the  {\sl infinitesimal generator} of $(\phi_t)$,  is {\sl semicomplete} in the sense that the Cauchy problem
\[
\begin{cases}
\frac{dx(t)}{dt}=G(x(t)),\\
x(0)=z,
\end{cases}
\]
has a  solution $x^z:[0,+\infty)\to \D$ for every $z\in \D$.
Conversely, any semicomplete holomorphic vector field in $\D$
generates a semigroup of holomorphic self-maps of $\D$.

\section{Topological and holomorphic models for semigroups}\label{model}

\subsection{Semigroups and holomorphic models}

\begin{definition}
Let $(\phi_t)$ be a semigroup of holomorphic self-maps of $\D$. A {\sl topological model} for $(\phi_t)$ is a triple $(\Omega, h, \Phi_t)$ such that $\Omega$ is an open subset of $\C$, $\Phi_t$ is a group of (holomorphic) automorphisms of $\Omega$ and $h: \D \to h(\D)\subset \Omega$ is a  homeomorphism on the image (that is, it is open, continuous and injective), $h\circ \phi_t= \Phi_t\circ h$ and 
\begin{equation}\label{absorbing}
\cup_{t\geq 0} \Phi_t^{-1}(h(\D))=\Omega.
\end{equation}
A {\sl holomorphic model}  is a topological model $(\Omega, h, \Phi_t)$ such that $h: \D \to \Omega$ is univalent.
\end{definition}
The previous notion of holomorphic model was introduced in \cite{BrAr}, where it was proved that every semigroup of holomorphic self-maps of any complex manifold admits a  holomorphic model, unique up to ``holomorphic equivalence''. Moreover, a model is ``universal'' in the sense that every other conjugation of the semigroup to a group of automorphisms factorize through the model (see \cite[Section 6]{BrAr} for more details).

Notice that given a topological model $(\Omega, h, \Phi_t)$ for a semigroup $(\phi_t)$ of holomorphic self-maps of $\D$ then $(\phi_t)$ is a group if and only if $h(\D)=\Omega$.

\begin{definition}
Let $(\phi_t)$ and $(\tilde{\phi}_t)$ be two semigroups of holomorphic self-maps of $\D$. Let  $(\Omega, h, \Phi_t)$ be a holomorphic (respectively, topological) model for $(\phi_t)$, and $(\tilde{\Omega}, \tilde{h}, \tilde{\Phi}_t)$ a holomorphic (respectively, topological) model for $(\tilde{\phi}_t)$.  We say that $(\Omega, h, \Phi_t)$ and $(\tilde{\Omega}, \tilde{h}, \tilde{\Phi}_t)$ are {\sl holomorphically (respect., topologically) conjugated} if there exists a biholomorphism (respect., homeomorphism) $\tau: \Omega\to \tilde{\Omega}$ such that $\tau \circ \Phi_t=\tilde{\Phi}_t \circ \tau$ and $\tau(h(\D))=\tilde{h}(\D)$. The map $\tau$ is called a {\sl holomorphic (respect., topological) conjugation of models}.

In case $(\phi_t)=(\tilde{\phi}_t)$ and $\tilde{h}=\tau \circ h$, we say that the models $(\Omega, h, \Phi_t)$  and $(\tilde{\Omega}, \tilde{h}, \tilde{\Phi}_t)$ are holomorphically (respect., topologically) {\sl equivalent}.
\end{definition}

Conjugated semigroups correspond to conjugated models:

\begin{proposition}\label{conj-model}
Let  $(\phi_t), (\tilde{\phi}_t)$ be two semigroups of holomorphic self-maps of $\D$, with  holomorphic (respectively, topological) models $(\Omega, h, \Phi_t)$ and $(\tilde{\Omega}, \tilde{h}, \tilde{\Phi}_t)$. The following are equivalent:
\begin{enumerate}
\item the holomorphic (respectively, topological) models $(\Omega, h, \Phi_t)$ and $(\tilde{\Omega}, \tilde{h}, \tilde{\Phi}_t)$  are holomorphically (respect., topologically) conjugated;
\item the semigroups $(\phi_t)$ and $(\tilde{\phi}_t)$ are holomorphically (respect., topologically) conjugated, that is, there exists an automorphism (respect., homeomorphism) $T: \D \to \D$ such that $T\circ \tilde{\phi}_t \circ T^{-1}=\phi_t$ for all $t\geq 0$.
\end{enumerate}
\end{proposition}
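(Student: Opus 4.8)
The plan is to move the conjugacy back and forth between $\D$ and the model spaces. Throughout I will use that $h,\tilde h$ are homeomorphisms onto their (open) images and that $\Phi_t(h(\D))\subseteq h(\D)$ and $\tilde\Phi_t(\tilde h(\D))\subseteq\tilde h(\D)$; both inclusions are immediate from $h\circ\phi_t=\Phi_t\circ h$, $\tilde h\circ\tilde\phi_t=\tilde\Phi_t\circ\tilde h$ together with the fact that $\phi_t,\tilde\phi_t$ are self-maps of $\D$.

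\emph{The direction $(1)\Rightarrow(2)$.} Given a conjugation of models $\tau\colon\Omega\to\tilde\Omega$, I would take $T:=h^{-1}\circ\tau^{-1}\circ\tilde h$. Because $\tau(h(\D))=\tilde h(\D)$, this composition is well defined on all of $\D$ and is a self-homeomorphism of $\D$; it is an automorphism when all the data are holomorphic, since then $h,\tilde h$ are univalent and $\tau$ is biholomorphic. Chaining $\tilde h\circ\tilde\phi_t=\tilde\Phi_t\circ\tilde h$, $\tau^{-1}\circ\tilde\Phi_t=\Phi_t\circ\tau^{-1}$ and $h^{-1}\circ\Phi_t=\phi_t\circ h^{-1}$ (valid on $h(\D)$) one gets $T\circ\tilde\phi_t=\phi_t\circ T$, that is $T\circ\tilde\phi_t\circ T^{-1}=\phi_t$ for every $t\ge0$.

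\emph{The direction $(2)\Rightarrow(1)$.} This is the substantial part. Given a homeomorphism $T\colon\D\to\D$ with $T\circ\tilde\phi_t\circ T^{-1}=\phi_t$, equivalently $T^{-1}\circ\phi_t=\tilde\phi_t\circ T^{-1}$, I would first define $\tau$ on $h(\D)$ by $\tau:=\tilde h\circ T^{-1}\circ h^{-1}$. This is a homeomorphism of $h(\D)$ onto $\tilde h(\D)$, and a short computation shows $\tau\circ\Phi_t=\tilde\Phi_t\circ\tau$ on $h(\D)$ (both sides being defined there thanks to $\Phi_t(h(\D))\subseteq h(\D)$). I would then extend $\tau$ to all of $\Omega$ using the absorbing condition \eqref{absorbing}: for $z\in\Omega$ choose $t\ge0$ with $\Phi_t(z)\in h(\D)$ and set
\[
\tau(z):=\tilde\Phi_t^{-1}\bigl(\tau(\Phi_t(z))\bigr).
\]

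\emph{The verification of the extension} is where essentially all the (routine) work sits, and I expect it to be the main obstacle. Independence of the choice of $t$ follows from $\Phi_s(h(\D))\subseteq h(\D)$ for $s\ge0$, the intertwining relation already established on $h(\D)$, and the group law $\tilde\Phi_s^{-1}\tilde\Phi_r=\tilde\Phi_{r-s}$. Since $\Omega=\bigcup_{t\ge0}\Phi_t^{-1}(h(\D))$ is an open cover on each member of which $\tau$ coincides with the composition $\tilde\Phi_t^{-1}\circ\tau|_{h(\D)}\circ\Phi_t$ of homeomorphisms (biholomorphisms in the holomorphic case), $\tau$ is continuous and open. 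It is injective because any two points may be carried by a common large $t$ into $h(\D)$, where $\tau$ is injective; surjectivity onto $\tilde\Omega$ follows from \eqref{absorbing} applied to $(\tilde\Phi_t)$, since $\tau$ already maps $h(\D)$ onto $\tilde h(\D)$. Finally $\tau\circ\Phi_s=\tilde\Phi_s\circ\tau$ on all of $\Omega$ and $\tau(h(\D))=\tilde h(\D)$ follow by choosing $t$ large and unwinding the definition, so $\tau$ is a conjugation of models and (1) holds. (In the holomorphic case one could instead invoke the universality of the holomorphic model, see \cite[Section~6]{BrAr}, to produce $\tau$ from $\tau|_{h(\D)}$.)
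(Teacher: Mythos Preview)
Your proof is correct and follows essentially the same route as the paper: in both directions the conjugacy is transported between $\D$ and the model via $h,\tilde h$, and for $(2)\Rightarrow(1)$ the map $\tau$ is first defined on $h(\D)$ and then extended to $\Omega$ using the absorbing property~\eqref{absorbing}, with well-definedness checked via the semigroup law. Your formulas are in fact cleaner than the paper's, which contains a harmless directional typo (the initial $\tau:=h^{-1}\circ T\circ\tilde h$ there should read $\tilde h\circ T^{-1}\circ h^{-1}$, as one sees from the subsequent definition of $\tau_t$ and the stated relation $T\circ\tilde\phi_t\circ T^{-1}=\phi_t$).
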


\begin{proof} (1) implies (2). Let  $\tau$ be a  holomorphic (respect., topological) conjugation of models between $(\Omega, h, \Phi_t)$ and $(\tilde{\Omega}, \tilde{h}, \tilde{\Phi}_t)$, that is, $\tau: \Omega\to \tilde{\Omega}$ is a biholomorphism (respect., homeomorphism) such that $\tau \circ \Phi_t=\tilde{\Phi}_t \circ \tau$ and $\tau(h(\D))=\tilde{h}(\D)$.  Define $T:=\tilde{h}^{-1} \circ \tau \circ h$. It is easy to see that $T$ is a holomorphic (respect., topological) conjugation between $(\phi_t)$ and $(\tilde{\phi}_t)$.

(2) implies (1). Let $T$ be a holomorphic (respect., topological) conjugation between $(\phi_t)$ and $(\tilde{\phi}_t)$. 
Next, we extend $\tau:=h^{-1}\circ T\circ \tilde h$ to a biholomorphism (respect., homeomorphism) $\tau: \Omega\to \tilde{\Omega}$ which intertwines $\Phi_t$ with $\tilde{\Phi}_t$ in the following way. Let $\Omega_t:=\Phi_t^{-1}(h(\D))$ and $\tilde{\Omega}_t:=\tilde{\Phi}_t^{-1}(\tilde{h}(\D))$, $t\geq 0$. Note that, if $t\geq s$ then $\Omega_s\subseteq \Omega_t$, since $\Phi_t=\Phi_{t-s}\circ \Phi_s$ and $h(\D)$ is invariant under $\Phi_t$ for all $t\geq 0$. Moreover, $\Phi_t(\Omega_t)=h(\D)$ and, by the definition of model, $\Omega=\cup_{t\geq 0}\Omega_t$. Similarly for $\tilde{\Omega}$.

For $t\geq 0$, define $\tau_t: \Omega_t\to \tilde{\Omega}_t$ by 
\[
\tau_t:=\tilde{\Phi}_t^{-1} \circ  \tilde{h} \circ T \circ h^{-1} \circ \Phi_t.
\]
Clearly, $\tau_t$ is a biholomorphism (respect., homeomorphism) from $\Omega_t$ and $\tilde{\Omega}_t$. Also, by definition, $\tilde{\Phi}_t \circ \tau_0=\tau_0 \circ \Phi_t|_{\Omega_0}$ for all $t\geq 0$. 

Let $0\leq s\leq t$ and let $\omega\in \Omega_s$. Then
\[
\tau_t(\omega)=\tilde{\Phi}_t^{-1}(\tau_0(\Phi_{t-s}(\Phi_s(\omega))))=\tilde{\Phi}_t^{-1}(\tilde{\Phi}_{t-s}(\tau_0(\Phi_s(\omega))))=\tau_s(\omega).
\]
Therefore, for $0\leq s\leq t$ it holds $\tau_t|_{\Omega_s}=\tau_s$. Hence, the map $\tau:\Omega\to \tilde{\Omega}$ defined by $\tau|_{\Omega_t}:=\tau_t$ is well defined and it is a 
biholomorphism (respect., homeomorphism) from $\Omega$ and $\tilde{\Omega}$.

Finally, by a similar argument as before, one can show that for all $t\geq 0$, $\tau_t \circ \Phi_s|_{\Omega_t}=\tilde{\Phi_s}\circ \tau_t$ for all $s\geq 0$, hence $\tau\circ \Phi_s=\tilde{\Phi}_s\circ \tau$ for all $s\geq 0$, concluding the proof. 
\end{proof}


Holomorphic models always exist and are unique up to holomorphic equivalence of models. In what follows we denote by  $\Ha:=\{\zeta \in \C: \Re \zeta>0\}$, $\Ha^{-}:=\{\zeta\in \C: \Re \zeta<0\}$ and, given $\rho>0$, $\strip_\rho:=\{\zeta\in \C: 0<\Re \zeta<\rho\}$. We simply write $\strip:=\strip_1$. The following result sums up the results in \cite{BrAr, Cowen}, see also \cite{Abate}.

\begin{theorem}\label{modelholo}
Let $(\phi_t)$ be a semigroup of holomorphic self-maps of $\D$. 
\begin{enumerate}
\item If $(\phi_t)$ is a {\sl group} of elliptic automorphism, then it has a holomorphic model $(\D, h, z\mapsto e^{i\theta t}z)$ for some $\theta\in (-\pi,\pi]$, with $h(\D)=\D$.
\item If $(\phi_t)$ is elliptic, not a group, then $(\phi_t)$ has a holomorphic model  $(\C, h, z\mapsto e^{\lambda t} z)$, for some  $\lambda\in \C$ such that $\Re \lambda<0$.
\item If $(\phi_t)$ is non-elliptic, then 
$(\phi_t)$ has a holomorphic model $(\Omega, h, \Phi_t)$ where   $\Omega=\C, \Ha, \Ha^{-},\strip_\rho$, for some $\rho>0$, and  $\Phi_t(z)=z+it$, $t\geq 0$.
\end{enumerate}
\end{theorem}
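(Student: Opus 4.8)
The starting point is the existence of a holomorphic model: by \cite{BrAr}, $(\phi_t)$ admits one, say $(\Omega,h,\Phi_t)$, with $h$ univalent, $(\Phi_t)$ a one-parameter group of automorphisms of $\Omega$, $h\circ\phi_t=\Phi_t\circ h$ for all $t\geq0$, and $\bigcup_{t\geq0}\Phi_t^{-1}(h(\D))=\Omega$. My plan is to (a) identify $\Omega$ up to biholomorphism, (b) recognize ellipticity through the fixed points of $(\Phi_t)$, and (c) in each case conjugate the group $(\Phi_t)$ to the stated normal form. For (a) I would recall, as in the proof of Proposition \ref{conj-model}, that the domains $\Omega_t:=\Phi_t^{-1}(h(\D))$ increase with $t$ and exhaust $\Omega$, each being biholomorphic to $h(\D)$, hence simply connected; passing to complements in $\widehat\C$ exhibits $\widehat\C\setminus\Omega=\bigcap_{t\geq0}(\widehat\C\setminus\Omega_t)$ as a nested intersection of compact connected sets, so $\widehat\C\setminus\Omega$ is connected and $\Omega$ is simply connected. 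By the Riemann mapping theorem, $\Omega$ is therefore either $\C$ or biholomorphic to $\D$.

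For (b) I would prove that $(\phi_t)$ is elliptic precisely when $(\Phi_t)$ has a common fixed point in $\Omega$: if $\phi_t(x)=x$ for some $x\in\D$ and all $t$, then $h(x)$ is fixed by every $\Phi_t$; conversely a common fixed point $\zeta_0$ lies in some $\Omega_{t_0}$, hence $\zeta_0=\Phi_{t_0}(\zeta_0)\in\Phi_{t_0}(\Omega_{t_0})=h(\D)$ and $h^{-1}(\zeta_0)\in\D$ is fixed by $\phi_{t_0}$. Note also that, $h$ being injective, $(\Phi_t)$ is trivial only if $(\phi_t)$ is.

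For (c) in the elliptic cases: if $(\phi_t)$ is an elliptic group then $h(\D)=\Omega$, so $\Omega\cong\D$; conjugating so the common fixed point of $(\Phi_t)$ becomes $0$, the $\Phi_t$ are rotations of $\D$ and, by continuity, $\Phi_t(z)=e^{i\theta t}z$, where differentiating $h\circ\phi_t=\Phi_t\circ h$ at the fixed point identifies $i\theta$ with the (purely imaginary) spectral value $\lambda$; this is (1). If $(\phi_t)$ is elliptic but not a group, then $\Omega$ cannot be biholomorphic to $\D$, for otherwise the same argument would present the $\Phi_t$ ($t\geq0$) as rotations leaving $h(\D)$ forward invariant, forcing $h(\D)$ to be rotation invariant and so $\Omega=\bigcup_t\Phi_t^{-1}(h(\D))=h(\D)$, against $h(\D)\neq\Omega$. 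Hence $\Omega=\C$, and since $\Aut(\C)$ consists of affine maps and $(\Phi_t)$ has a fixed point, a translation normalizes $\Phi_t$ to $z\mapsto e^{\lambda t}z$; differentiating as above identifies $\lambda$ with the spectral value, which satisfies $\Re\lambda<0$ because $(\phi_t)$ is elliptic and not a group. This is (2).

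For (c) in the non-elliptic case, $(\Phi_t)$ is a nontrivial, fixed-point-free one-parameter group of automorphisms of $\Omega$. If $\Omega=\C$, such a group of affine maps is a translation $z\mapsto z+ct$ with $c\neq0$, normalized by $z\mapsto(i/c)z$ to $z\mapsto z+it$. If $\Omega$ is biholomorphic to $\D$, I would transfer $(\Phi_t)$ through a biholomorphism onto the right half-plane $\Ha$, obtaining a nontrivial fixed-point-free one-parameter subgroup of $\Aut(\Ha)\cong\mathrm{PSL}(2,\R)$; by the classification of such subgroups (the real Jordan form of the generator), after conjugation it is either parabolic, $\zeta\mapsto\zeta+ibt$ with $b\neq0$, or hyperbolic, $\zeta\mapsto e^{ct}\zeta$ with $c\neq0$ — the elliptic possibility being ruled out since it has a fixed point in $\Ha$. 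A scaling, together with the conjugation $\zeta\mapsto-\zeta$ (sending $\Ha$ onto $\Ha^{-}$) when $b<0$, brings the parabolic case to $\zeta\mapsto\zeta+it$ on $\Ha$ or on $\Ha^{-}$; and in the hyperbolic case $\log$ carries $\Ha$ onto a horizontal strip and $\Phi_t$ onto a horizontal translation, which $\zeta\mapsto(i/c)\zeta$ turns into $\zeta\mapsto\zeta+it$ on a vertical strip, normalized by a real translation to $\strip_\rho$ with $\rho=\pi/|c|>0$. In every case $\Phi_t(z)=z+it$ and $\Omega\in\{\C,\Ha,\Ha^{-},\strip_\rho\}$, which is (3). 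I expect the only genuinely nonformal ingredient to be the $\mathrm{PSL}(2,\R)$ classification used in this last step; everything else is bookkeeping with explicit conjugations, and in fact the whole statement is contained in \cite{Cowen,BrAr} (see also \cite{Abate}), so one may alternatively just refer to those sources.
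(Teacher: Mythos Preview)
Your proof sketch is correct, and in fact it goes considerably further than the paper does: the paper gives no proof at all of Theorem~\ref{modelholo}, merely introducing it with the sentence ``The following result sums up the results in \cite{BrAr, Cowen}, see also \cite{Abate}.'' You yourself note this option in your final line. So there is nothing to compare strategy against; your argument supplies what the paper outsources.

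A couple of small remarks on the argument itself, in case you want to tighten it. In the elliptic non-group case, when you rule out $\Omega\cong\D$ by arguing that $h(\D)$ would be rotation invariant, you are implicitly using that the rotation parameter $\theta$ is nonzero; this is fine since $\theta=0$ would force $(\phi_t)$ to be trivial, but it is worth saying. In the non-elliptic case with $\Omega=\C$, you phrase the obstruction as ``no common fixed point,'' but what you actually use (and what holds) is that a continuous one-parameter group of affine maps either consists entirely of translations or has a \emph{common} fixed point, so the two formulations coincide here. Finally, in passing from ``some $\Phi_{t_0}$ has a fixed point $\zeta_0\in\Omega$'' to ``$(\phi_t)$ is elliptic,'' you need $\zeta_0\in h(\D)$; this follows because $\Phi_{t_1}(\zeta_0)\in h(\D)$ for large $t_1$ and $\Phi_{t_1}(\zeta_0)$ is again fixed by $\Phi_{t_0}$, but you might make that step explicit.
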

 The spectral value of an elliptic semigroup is clearly a holomorphic invariant. 

The function $h$ in the previous model is called the {\sl K\oe nigs function} of the semigroup. All the previous models are holomorphically non-equivalent. In fact, we have that
\begin{itemize}
\item If $\Omega=\C$ and $\Phi_t(z)=e^{\lambda t} z$, for some  $\lambda\in \C$ such that $\Re \lambda<0$, then $(\phi_t)$ is a semigroup of {\sl elliptic type}.
\item If $\Omega=\C$ and $\Phi_t(z)=z+it$, then $(\phi_t)$ is a semigroup of {\sl parabolic type of zero hyperbolic step}. 
\item If $\Omega=\Ha, \Ha^-$ and $\Phi_t(z)=z+ it$, then $(\phi_t)$ is a semigroup of {\sl parabolic type of positive hyperbolic step}. 
\item If $\Omega=\strip_\rho$, for some $\rho>0$,  and $\Phi_t(z)=z+ it$, then $(\phi_t)$ is a semigroup of {\sl hyperbolic type}.
\end{itemize}

In any of the non-elliptic cases, the model is given by  $(\Omega=I\times \R,h,z+it)$, where $I$ is any of the intervals $(-\infty,0)$, $(0,+\infty)$, $(0,\rho)$, for some  $\rho>0$, or $\R$. This interval $I$ is  a holomorphic invariant. In particular, in the hyperbolic case, $\rho$ depends on the dilation $\lambda$ of the semigroup.

\subsection{Semigroups and topological models}

From a topological point of view, given a semigroup $(\phi_t)$, which is not a group, there are only two possible models:

\begin{proposition}\label{top1}
Let $(\phi_t)$ be a semigroup of holomorphic self-maps of $\D$.
\begin{enumerate}
\item If $(\phi_t)$ is elliptic, not a group, then $(\phi_t)$ has a topological model given by $(\C, h, z\mapsto e^{-t}z)$.
\item If $(\phi_t)$ is non-elliptic, then $(\phi_t)$ has a topological model of  hyperbolic type, given by a model $(\strip, h, z\mapsto z+ it)$.
\end{enumerate}
\end{proposition}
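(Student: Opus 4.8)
The strategy is to reduce each case to the corresponding holomorphic model from Theorem \ref{modelholo} and then identify an explicit biholomorphism (not merely a homeomorphism) of the ambient domain that conjugates the holomorphic automorphism family $\Phi_t$ to the claimed normal form, while transporting the image $h(\D)$ appropriately and preserving the absorbing condition \eqref{absorbing}. Concretely: if $\tau\colon\Omega\to\Omega'$ is a biholomorphism with $\tau\circ\Phi_t=\Phi'_t\circ\tau$, then setting $h':=\tau\circ h$ yields a new model $(\Omega',h',\Phi'_t)$ for the same semigroup, since injectivity, openness and continuity of $h'$ are inherited from $\tau$ and $h$, the intertwining $h'\circ\phi_t=\Phi'_t\circ h'$ follows by composing, and $\bigcup_t(\Phi'_t)^{-1}(h'(\D))=\tau\!\left(\bigcup_t\Phi_t^{-1}(h(\D))\right)=\tau(\Omega)=\Omega'$. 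Thus it suffices to exhibit the right $\tau$ in each case.

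\emph{Elliptic, not a group (case 1).} By Theorem \ref{modelholo}(2) there is a holomorphic model $(\C,h,z\mapsto e^{\lambda t}z)$ with $\Re\lambda<0$. Write $\lambda=-\mu+i\nu$ with $\mu>0$. I would first conjugate by a suitable branch of a power map on $\C\setminus\{0\}$ to kill the rotational part: the map $\tau(z)=z^{\lambda/(-1)}$ — more carefully, one lifts to the universal cover and checks single-valuedness — or, cleanest, pass to the logarithmic picture. Lift the model via $\exp$: on the strip/plane coordinate $w$ with $z=e^w$, the action $z\mapsto e^{\lambda t}z$ becomes $w\mapsto w+\lambda t$, a \emph{translation}; then an affine map of $\C$ sends translation by $\lambda t$ to translation by $-t$, hence descends to $z\mapsto e^{-t}z$ on $\C$. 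The point $0$ (the fixed point, image of the Denjoy--Wolff point under $h$) is preserved, and $h(\D)$, an absorbing open set for the linear flow, is carried to another such set; the absorbing property \eqref{absorbing} is preserved as noted above. This produces the topological model $(\C,h',z\mapsto e^{-t}z)$; note $\tau$ is in fact biholomorphic, so we even get a holomorphic conjugation of models in this case.

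\emph{Non-elliptic (case 2).} By Theorem \ref{modelholo}(3) the holomorphic model lives on $\Omega\in\{\C,\Ha,\Ha^-,\strip_\rho\}$ with $\Phi_t(z)=z+it$. Each of these four domains is conformally equivalent to the strip $\strip=\strip_1$ — for $\strip_\rho$ scale by $1/\rho$; for $\Ha$ use a logarithm mapping the half-plane to a strip of width $\pi$, then scale; similarly for $\Ha^-$ and $\C$ (the latter being a degenerate limit, but still one can map $\C$ biholomorphically onto $\strip$ \emph{topologically}, since both are homeomorphic to $\R^2$). The crucial point is to choose the conformal/topological equivalence $\tau$ so that it \emph{respects the direction of the flow}, i.e. conjugates vertical translation $z\mapsto z+it$ on $\Omega$ to vertical translation $z\mapsto z+it$ on $\strip$; for the conformal cases this is automatic once one uses the standard maps (which send horizontal ``ends'' to horizontal ``ends''), and for the $\C$ case one builds the homeomorphism by hand, e.g. as $x+iy\mapsto \sigma(x)+iy$ with $\sigma\colon\R\to(0,1)$ an increasing homeomorphism, which trivially commutes with $z\mapsto z+it$. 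Then $h':=\tau\circ h$ gives the desired hyperbolic-type topological model $(\strip,h',z\mapsto z+it)$.

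\emph{Main obstacle.} The genuinely non-trivial point is the $\Omega=\C$ subcase of (2): there is no biholomorphism $\C\to\strip$, so one must produce an honest \emph{homeomorphism} conjugating the two flows, and verify it transports the absorbing open set $h(\D)$ to an open set still satisfying \eqref{absorbing} in $\strip$. The fix is the product homeomorphism $x+iy\mapsto\sigma(x)+iy$ above, which is flow-equivariant by construction; absorption is preserved by the general composition argument at the start. Everything else is routine transport of structure along explicit conformal maps, plus the observation that passing from a holomorphic to a topological model only requires the conjugacy to be a homeomorphism, so we are free to deform the conformal type of $\Omega$ as long as the flow direction is matched.
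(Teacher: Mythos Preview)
Your transport-of-structure framework is fine, and the product homeomorphism $x+iy\mapsto\sigma(x)+iy$ you give for $\Omega=\C$ is exactly the right idea (and is what the paper does). The gaps are in the other subcases, where you assert conformal conjugacies that do not exist.

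\textbf{Non-elliptic case, $\Omega\in\{\Ha,\Ha^-,\strip_\rho\}$.} There is \emph{no} biholomorphism $\tau:\Omega\to\strip$ with $\tau(z+it)=\tau(z)+it$: differentiating in $t$ gives $\tau'\equiv 1$, so $\tau(z)=z+c$, which cannot map a half-plane or a strip of width $\rho\neq1$ onto $\strip$. (Equivalently, the paper remarks after Theorem~\ref{modelholo} that the interval $I$ is a holomorphic invariant.) In particular your ``scale by $1/\rho$'' on $\strip_\rho$ conjugates $z\mapsto z+it$ to $z\mapsto z+it/\rho$, not to $z\mapsto z+it$; and a logarithm on $\Ha$ does not commute with translation. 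The fix is the very map you already used for $\C$: choose a homeomorphism $\theta$ from the relevant interval $(0,\infty)$, $(-\infty,0)$ or $(0,\rho)$ onto $(0,1)$ and set $\tau(x+iy)=\theta(x)+iy$. This is what the paper does uniformly in all four subcases.

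\textbf{Elliptic case.} The complex-affine map $w\mapsto -w/\lambda$ on the logarithmic cover \emph{does not descend} through $\exp$ unless $-1/\lambda\in\Z$, because $-2\pi i/\lambda\notin 2\pi i\Z$ in general; so your $\tau$ is not single-valued, and certainly not biholomorphic. (This must fail: the spectral value is a holomorphic invariant, as the paper notes.) What does descend is the \emph{real}-linear map $(\log\rho,\theta)\mapsto(-\tfrac{1}{a}\log\rho,\ \theta-\tfrac{b}{a}\log\rho)$ with $\lambda=a+ib$; pushing down and sending $0\mapsto 0$ gives the paper's explicit homeomorphism
\[
\varphi(\rho e^{i\theta})=\rho^{-1/a}\exp\!\Big(i\big(\theta-\tfrac{b}{a}\log\rho\big)\Big),\qquad \varphi(0)=0,
\]
for which one checks directly $\varphi(e^{\lambda t}w)=e^{-t}\varphi(w)$. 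So the conjugacy is only topological, not holomorphic.
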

\begin{proof}
Assume that $(\phi_t)$ is parabolic. Let $(\Omega, h, z\mapsto z+it)$ be the model of $(\phi_t)$ given by Theorem \ref{modelholo}. If $\Omega=\C$, define $\v: \C \to \strip$ by $\v(x+iy)=\theta(x)+iy$, where $\theta: \R \to (0,1)$ is any homeomorphism. Then clearly $\v (z+it)=\v(z)+it$ for all $t\in \R$. Now, let $\tv:=\v\circ h$. It is then easy to see that $(\strip, \tv, \Phi_t)$ is a topological model for $(\phi_t)$. In case $\Omega=\Ha$ and $\Phi_t(z)=z+it$, it is enough to replace $\theta$ with any homeomorphism $\theta: (0,\infty)\to (0,1)$. While, if $\Omega=\Ha^-$ one can replace $\theta$ with any homeomorphism $\theta: (-\infty,0)\to (0,1)$. If $\Omega=\strip_\rho$, just define $\varphi(x+iy)=x/\rho+iy$.

In case $(\phi_t)$ is elliptic, not a group, let $(\C, h, z\mapsto e^{\lambda t}z)$ be the model of $(\phi_t)$ given by Theorem \ref{modelholo}, with $\lambda=a+ib$, $a<0$ and $b\in \R$. Define 
\[
\v(\rho e^{i\theta}):=\exp\left(-\left( \frac{1}{a}+i\frac{b}{a}\right)\log \rho \right)e^{i\theta}, \quad \rho\neq 0,
\]
and $\v(0)=0$. It is easy to see that $\v: \C \to \C$ is a homeomorphism and that $\v(e^{\lambda t}w)=e^{-t}\v(w)$ for all $t\in \R$ and $w\in \C$. Then  $(\C, \tv, z\mapsto e^{-t}z)$ is the topological model of $(\phi_t)$, where $\tilde \varphi =\varphi \circ h$. 
\end{proof}

As a consequence we have the following straightforward result:
\begin{corollary}\label{conjugatohip} 
Every non-elliptic semigroup of holomorphic self-maps of $\D$ is topologically conjugated to a hyperbolic  semigroup of holomorphic self-maps of $\D$.
\end{corollary}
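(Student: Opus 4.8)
The plan is to extract the required hyperbolic semigroup directly from the normalised model provided by Proposition~\ref{top1}, and then conclude with Proposition~\ref{conj-model}. So let $(\phi_t)$ be a non-elliptic semigroup of holomorphic self-maps of $\D$. By Proposition~\ref{top1}(2) it admits a topological model of hyperbolic type $(\strip,h,\Phi_t)$, where $\strip=\strip_1$ and $\Phi_t(z)=z+it$. Put $U:=h(\D)\subseteq\strip$. Since $h$ is an open continuous injection, $U$ is an open, connected and simply connected subset of $\C$; being contained in $\strip$, it is moreover a \emph{proper} subdomain of $\C$. The intertwining relation $h\circ\phi_t=\Phi_t\circ h$ gives $U+it\subseteq U$ for all $t\geq 0$, while the absorbing condition~\eqref{absorbing} reads $\bigcup_{t\geq 0}(U-it)=\strip$.

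Next I would build a concrete hyperbolic semigroup having $U$ as K\oe nigs domain. Let $g:\D\to U$ be a Riemann map (it exists by the two properties of $U$ just recorded) and set $\psi_t:=g^{-1}\circ\Phi_t\circ g$. Because $\Phi_t(U)\subseteq U$ for $t\geq 0$, each $\psi_t$ is a well-defined holomorphic self-map of $\D$; clearly $\psi_0=\mathrm{id}$ and $\psi_t\circ\psi_s=\psi_{t+s}$, while $t\mapsto\psi_t$ is continuous because $t\mapsto\Phi_t$ is, so $(\psi_t)$ is a semigroup of holomorphic self-maps of $\D$. I then claim that $(\strip,g,\Phi_t)$ is a holomorphic model of $(\psi_t)$: indeed $g$ is univalent, $\Phi_t$ is a group of automorphisms of $\strip$, $g\circ\psi_t=\Phi_t\circ g$ by construction, and $\bigcup_{t\geq 0}\Phi_t^{-1}(g(\D))=\bigcup_{t\geq 0}(U-it)=\strip$ by the last property of $U$. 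Since the model domain is the finite-width strip $\strip_1$, the classification following Theorem~\ref{modelholo} shows that $(\psi_t)$ is a hyperbolic semigroup.

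Finally, a holomorphic model is in particular a topological model, so $(\strip,g,\Phi_t)$ is a topological model of $(\psi_t)$, whereas $(\strip,h,\Phi_t)$ is one of $(\phi_t)$. The identity map $\tau:=\mathrm{id}_{\strip}$ is a homeomorphism of $\strip$ with $\tau\circ\Phi_t=\Phi_t\circ\tau$ and $\tau(h(\D))=U=g(\D)$, hence these two topological models are topologically conjugated. By Proposition~\ref{conj-model}, the semigroups $(\phi_t)$ and $(\psi_t)$ are topologically conjugated, and $(\psi_t)$ is hyperbolic; this is the assertion.

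I do not expect any genuine obstacle here: the whole force of Proposition~\ref{top1} is precisely that it has already normalised the model domain into a fixed strip, so that the \emph{same} domain $U$ can play simultaneously the role of the image of a topological model of $(\phi_t)$ and, through a Riemann map, the role of the K\oe nigs domain of an honest hyperbolic semigroup. The only points requiring a line of verification are the routine facts that the $(\psi_t)$ so constructed is a continuous one-parameter semigroup of holomorphic self-maps of $\D$ and that its holomorphic model has finite-width-strip domain (hence is of hyperbolic type), together with the bookkeeping needed to apply Proposition~\ref{conj-model}.
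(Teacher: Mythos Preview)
Your argument is correct and is precisely the natural unpacking of what the paper leaves as a ``straightforward'' consequence of Proposition~\ref{top1}: take the normalised topological model on $\strip$, rebuild a genuine hyperbolic semigroup from the same image domain via a Riemann map, and match the two models through the identity using Proposition~\ref{conj-model}. There is no substantive difference in approach, only that you have written out the details the paper omits.
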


Clearly, elliptic and non-elliptic models cannot be topologically equivalent. For groups of automorphisms, there are  more possible topologically inequivalent models: 

\begin{corollary}
Let $(\phi_t)$ be a group of automorphisms of $\D$. 
\begin{enumerate}
\item If $(\phi_t)$ is elliptic with spectral value $i\theta$, with $\theta\in (-\pi,\pi]$, then its topological model is given by $(\D, h, z\mapsto e^{it|\theta|}z)$, with $h(\D)=\D$, 
\item if $(\phi_t)$ is non-elliptic then its topological model is given by $(\strip, h, z\mapsto z+it)$,  with $h(\D)=\strip$.
\end{enumerate}
\end{corollary}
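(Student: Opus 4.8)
The plan is to read off both normal forms from the holomorphic models of Theorem~\ref{modelholo} together with the topological normalization already performed inside the proof of Proposition~\ref{top1}, keeping track at each step of the image $h(\D)$. The one extra ingredient I will use is the remark recorded right after the definition of topological model: $(\phi_t)$ is a group if and only if $h(\D)=\Omega$. This upgrades the inclusion $h(\D)\subseteq\Omega$ to an equality, and the equality will survive the normalizations below precisely because the homeomorphisms used there are surjective onto the normalized domain.

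For an elliptic group with spectral value $i\theta$, $\theta\in(-\pi,\pi]$, Theorem~\ref{modelholo}(1) supplies a holomorphic model $(\D,h,z\mapsto e^{i\theta t}z)$ with $h(\D)=\D$; since a univalent map is a homeomorphism onto its image, this is already a topological model, and it has the required shape when $\theta\ge0$ (then $|\theta|=\theta$). When $\theta<0$ I compose $h$ on the left with complex conjugation: the map $\psi\colon\D\to\D$, $\psi(z)=\overline z$, is a homeomorphism and satisfies $\psi(e^{i\theta t}w)=e^{-i\theta t}\psi(w)=e^{i|\theta|t}\psi(w)$, so $(\D,\psi\circ h,\ z\mapsto e^{i|\theta|t}z)$ is a topological model with $(\psi\circ h)(\D)=\D$; condition~\eqref{absorbing} is immediate because the maps $z\mapsto e^{i|\theta|t}z$ are automorphisms of the model domain $\D$.

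For a non-elliptic group, take its holomorphic model $(\Omega,h,z+it)$ from Theorem~\ref{modelholo}(3). Since $(\phi_t)$ is a group, $h(\D)=\Omega$, and since $h$ is univalent, $\Omega$ is biholomorphic to $\D$; in particular $\Omega\ne\C$, so $\Omega\in\{\Ha,\Ha^{-},\strip_\rho\}$. In each of these three cases the construction in the proof of Proposition~\ref{top1} produces a homeomorphism $\v\colon\Omega\to\strip$, \emph{onto} $\strip$, intertwining the vertical translations, $\v(z+it)=\v(z)+it$; putting $\tv:=\v\circ h$ then gives the topological model $(\strip,\tv,\ z\mapsto z+it)$ with $\tv(\D)=\v(\Omega)=\strip$, as required.

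It remains to check that the listed data are genuine topological invariants, i.e.\ that the models obtained for non-conjugate groups are non-conjugate. An elliptic and a non-elliptic group cannot be topologically conjugate: by Proposition~\ref{conj-model} this would yield a homeomorphism $\tau\colon\D\to\strip$ with $\tau\circ(z\mapsto e^{i|\theta|t}z)=(z\mapsto z+it)\circ\tau$, which is impossible since the left-hand flow fixes $0$ while $z\mapsto z+it$ has no fixed point for $t>0$. For two elliptic groups, if $|\theta|>0$ the model flow $z\mapsto e^{i|\theta|t}z$ is periodic with least period $2\pi/|\theta|$, a number preserved by topological conjugacy, while $|\theta|=0$ is characterized by triviality of the flow; hence conjugate elliptic models have the same $|\theta|$. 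Nothing further is needed for part~(2): every non-elliptic group is assigned the \emph{same} model $(\strip,h,z+it)$, so all non-elliptic groups are mutually topologically conjugate (again by Proposition~\ref{conj-model}). I do not expect any real obstacle; the only points demanding attention are carrying the equality $h(\D)=\Omega$ through the normalization and the observation that topological, unlike holomorphic, conjugacy destroys the sign of $\theta$ but not its modulus — through the orientation-reversing homeomorphism $z\mapsto\overline z$ — which is exactly why part~(1) must be phrased with $|\theta|$.
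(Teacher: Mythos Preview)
Your argument is correct and follows the same route as the paper: the elliptic case via Theorem~\ref{modelholo}(1) together with the conjugation $z\mapsto\overline z$ when $\theta<0$, and the non-elliptic case via Proposition~\ref{top1}. You are in fact more explicit than the paper on one point: to obtain $h(\D)=\strip$ in part~(2) you invoke the group characterization $h(\D)=\Omega$ and observe that the normalizing homeomorphisms from the proof of Proposition~\ref{top1} are surjective onto~$\strip$, whereas the paper simply says the result ``follows at once'' from Proposition~\ref{top1} (leaving that step to the reader). Your final paragraph on invariance goes beyond what the corollary itself asserts; the paper addresses that question separately in the remark immediately following the corollary, with a slightly different argument for the elliptic case.
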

\begin{proof}
If $(\phi_t)$ is non-elliptic, then the result follows at once from Proposition \ref{top1}. If $(\phi_t)$ is elliptic, it has the holomorphic model $(\D, h, z\mapsto e^{it\theta}z)$ by Theorem \ref{modelholo}. If $\theta<0$, the map $z\mapsto \overline{z}$ conjugates the holomorphic model to the model $(\D, h, z\mapsto e^{it|\theta|}z)$
\end{proof}

\begin{remark}
Let $(\varphi_t)$ and $(\phi_t)$ two groups of automorphisms with models $(\D, h, z\mapsto e^{it\theta_1}z)$ and $(\D, \tilde h, z\mapsto e^{it\theta_2}z)$, respectively. Then $(\varphi_t)$ and $(\phi_t)$ are topologically conjugated if and only if $|\theta_1|=|\theta_2|$.  Indeed, if $\theta_1=-\theta_2$ the map $\tau(z)=\overline z$ intertwines the two groups. On the other hand, the semigroups are topologically conjugated, there exists a homeomorphism $T:\D\to \D$ such that $T(e^{\theta_1t}z)= e^{\theta_2t}T(z)$ for all $t>0$ and all $z\in \D$. If  $\theta_1=0$ then $\theta_2=0$ as well, and similarly if $\theta_2=0$ then $\theta_1=0$. In case $\theta_1,\theta_2$ are non-zero, let $t=2\pi/\theta_1$. Then $T(z)= e^{\theta_2 2\pi/\theta_1}T(z)$ for all  $z\in \D$. Hence $\theta_2 /\theta_1=:m\in \Z$. Taking $t=2\pi/(m\theta_1)$ we deduce that $T(e^{2\pi/m}z)= T(z)$ for all $z\in \D$. Therefore $m=\pm1$.
\end{remark}

The previous remark follows also from Naishul's theorem (see, {\sl e.g.}, \cite[Theorem 2.29]{Br}), which states that two germs of elliptic holomorphic maps in $\C$ fixing $0$ which are topologically conjugated by an orientation preserving map, must have the same derivative at $0$. The proof of such a result is however much more complicated than the corresponding results for groups.

It is worth pointing out that while the absolute value of the spectral value is a topological invariant for elliptic groups, all elliptic semigroups which do not form a group, are topologically equivalent to an elliptic semigroup with spectral value $-1$.

\section{Maximal contact arcs and boundary fixed points}\label{max-tra}
\subsection{Trajectories}

The following result is due to P. Gumenyuk.

\begin{theorem} \cite[Thm. 3.1, Prop. 3.2]{P}  (see also \cite{CDP}).  \label{pavelone}
Let $(\phi_t)$ be a semigroup of holomorphic self-maps of $\D$. Then for every $\sigma\in \de \D$ the non-tangential limit $\phi_t(\sigma):=\angle\lim_{z\to \sigma}\phi_t(z)$ exists. Moreover, the curve $[0,+\infty)\ni t\mapsto \phi_t(\sigma)$ is continuous.
\end{theorem}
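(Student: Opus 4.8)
The plan is to reduce both assertions to statements about radial limits and then control these through the holomorphic model $(\Omega,h,\Phi_t)$ of Theorem~\ref{modelholo}. For the existence of $\phi_t(\sigma)$, note that each $\phi_t$ is a bounded holomorphic function on $\D$, so by Lindel\"of's theorem it suffices to show that the radial limit $\lim_{r\to1^-}\phi_t(r\sigma)$ exists; the non-tangential limit then automatically exists and equals it.

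I would obtain the radial limit by passing to the model, writing $\phi_t=h^{-1}\circ\Phi_t\circ h$. Elements of a one-parameter semigroup are injective (by uniqueness, both forward and backward in time, of solutions of the Cauchy problem for the infinitesimal generator $G$), so the image $\Gamma_t:=\phi_t([0,\sigma))$ of the radius is a Jordan arc in $\D$; by Carath\'eodory's theory of prime ends, the radial limit exists precisely when the radial cluster set $\bigcap_{0<r<1}\overline{\phi_t(\{s\sigma:r\le s<1\})}$ is a single point, so the problem is to exclude oscillation of $\Gamma_t$ as it approaches $\de\D$.

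The decisive input is the geometry of the K\oe nigs domain $h(\D)$: in the non-elliptic cases $h(\D)$ is invariant under the forward translations $\Phi_t$, hence each of its slices in the translation direction is a half-line; in the elliptic case $h(\D)$ is spirallike with respect to $z\mapsto e^{\lambda t}z$. Even when $h$ itself has no radial limit at $\sigma$ — its prime end there may have a non-degenerate impression — this one-directional ``starlikeness'' forces the oscillation of $h(r\sigma)$ to be aligned with the flow direction, so that the affine map $\Phi_t$ carries the prime end of $h(\D)$ determined by $\sigma$ either to a single prime end of $h(\D)$ or to an interior point (when the trajectory enters $\D$); in either case the radial cluster set of $\phi_t=h^{-1}\circ\Phi_t\circ h$ at $\sigma$ collapses to a point. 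Quantifying how $\Phi_t$ acts on the prime ends of $h(\D)$, using only this convexity-type property of the K\oe nigs domain, is the step I expect to be the main obstacle; at ``regular'' boundary points — those across some open arc of which $G$ extends holomorphically — the conclusion is instead immediate, since there $(t,z)\mapsto\phi_t(z)$ extends continuously (indeed real-analytically on the $\D$-side) to a neighbourhood in $\oD$.

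For the continuity of $[0,+\infty)\ni t\mapsto\phi_t(\sigma)$, recall that $(t,z)\mapsto\phi_t(z)$ is continuous on $[0,+\infty)\times\D$ — real-analytic in $t$, by Berkson--Porta — and that, by the part just proved, $\phi_t(\sigma)=\anglim_{z\to\sigma}\phi_t(z)$ for every $t$. A normal-families/equicontinuity argument near $\sigma$, based on the uniform bound $|\phi_t|\le1$, then allows interchanging the limits $z\to\sigma$ and $t\to t_0$, yielding $\phi_t(\sigma)\to\phi_{t_0}(\sigma)$; once the existence part is available this is comparatively routine. Alternatively, one can transfer continuity through the model once the prime-end behaviour of $\Phi_t$ on $h(\D)$ is understood, using that $t\mapsto\Phi_t$ is continuous, that $\Phi_t(h(\D))\subseteq h(\D)$, and that $h^{-1}$ extends continuously to the prime-end compactification of $h(\D)$.
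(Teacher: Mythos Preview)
The paper does not prove this theorem: it is quoted with attribution to \cite[Thm.~3.1, Prop.~3.2]{P} (see also \cite{CDP}), and no argument is supplied. So there is no proof in the paper against which to compare your proposal; the result is imported as a black box.

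On the substance of your sketch: the reduction to radial limits via Lindel\"of, the passage to the K\oe nigs model, and the use of the ``starlike-at-infinity'' structure of $h(\D)$ (each vertical slice being an upward half-line) are indeed the ingredients of Gumenyuk's argument in \cite{P}. Your honest identification of the prime-end step---showing that $\Phi_t$ carries the prime end of $h(\D)$ over $\sigma$ to a single prime end or to an interior point---as the main obstacle is accurate; what you have written there is a plausible heuristic, not a proof, and this is precisely where \cite{P} does the real work.

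There is, however, a genuine gap in your continuity argument. The bound $|\phi_t|\le1$ gives normality, hence equicontinuity, only on \emph{compact subsets of $\D$}; it yields no uniform control as $z\to\sigma\in\de\D$, so you cannot interchange the limits $z\to\sigma$ and $t\to t_0$ on that basis. Calling this step ``comparatively routine'' is optimistic: without boundary equicontinuity the interchange simply fails, and producing such control is essentially as hard as the existence part. Your alternative route through the model is the viable one, but it again rests on the unproved prime-end analysis. In short, both halves of your proposal bottleneck at the same missing lemma about how $\Phi_t$ acts on the prime-end boundary of $h(\D)$.
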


Given $\sigma \in \overline\D$, the curve $[0,+\infty)\ni t\mapsto \phi_t(\sigma)$ is called the {\sl trajectory of the semigroup associated with $\sigma$}. If $\sigma \in \D$, the trajectory associated with $\sigma$ is contained in $\D$ and converges to the Denjoy\,--\,Wolff point of the semigroup, in case $(\phi_t)$ is not an elliptic group. 

In case $\sigma\in \de \D$ and $\phi_t(\sigma)\in \D$ for some $t>0$, then $\phi_s(\sigma)\in \D$ for all $s\geq t$. This motivates the following definition:

\begin{definition}
Let $(\phi_t)$ be a semigroup of holomorphic self-maps of $\D$ and $\sigma \in \partial \D$. The  {\sl life-time} (on the boundary) of the trajectory associated with $\sigma$ is
\[
T(\sigma):=\sup\{t\geq 0: \phi_t(\sigma)\in \de \D\}\in [0,+\infty].
\]
\end{definition}

Note that if $\sigma$ is a boundary fixed point for a semigroup, then its associated trajectory is constant and the life-time $T(\sigma)=\infty$.


\subsection{Maximal contact arcs}
In this subsection we recall the notion of contact arc introduced by the first author and Gumenyuk in \cite{BrGu} and summarize some related results we will use throughout the paper.

\begin{definition}Let $(\phi_t)$ be a semigroup of holomorphic self-maps of $\D$ with associated infinitesimal generator $G$. An open arc $A\subset\de\D$ is said to be a {\sl contact arc} for $(\phi_t)$, if
$G$ extends holomorphically to $A$ with $\Re\{\overline z\,G(z)\}=0$ and $G(z)\neq0$ for all $z\in A$. A contact arc $A$ is {\sl maximal}  if there exists no other contact arc $B$ for $(\phi_t)$ such that $B\supsetneq A$.
Each contact arc~$A$ is endowed with a natural orientation induced by the unit vector field $G/|G|$. This determines  the \textsl{initial} and \textsl{final} end-points of~$A$.
\end{definition}

If $(\phi_t)$ is a semigroup, not an elliptic group, and $A$ is a contact arc for the semigroup, then $A\neq \partial \D$. Notice that for elliptic groups, $\partial \D$ is a contact arc. From now on, we assume that the semigroup is not an elliptic group.

The following results give  a characterization of contact arcs in terms of the K\oe nigs map. 

\begin{theorem}  \cite[Sect. 3]{BrGu}. \label{chac_contact_arc}
Let $(\phi_t)$ be a  semigroup of holomorphic self-maps, not an elliptic group, with associated infinitesimal generator $G$ and  K\oe nigs function $h$. Let $A\subset \partial \D$ be an open arc. Then the following statements are equivalent  
\begin{enumerate}
\item $G$ extends holomorphically to $A$ with $\Re\{\overline z\,G(z)\}=0$ and $G(z)\neq0$ for all $z\in A$;
\item $h$ extends holomorphically through $A$ and 
\begin{itemize}
\item there exists $c\in \overline I$ such that $\Re h(z)=c$ for all $z\in A$, if $(\phi_t)$ is non-elliptic with a holomorphic model $(I\times \R,h,z+it)$;
\item $h(A)$ is contained in a spiral  $\{z\in \C: z=e^{t\lambda}v, t\in \R\}$ for some $v\in \C\setminus\{0\}$, if $(\phi_t)$ is elliptic with spectral value $\lambda$.
\end{itemize}
\end{enumerate}
\end{theorem}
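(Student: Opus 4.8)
The plan is to reduce the whole equivalence to the differential identity relating the infinitesimal generator $G$ to the K\oe nigs function $h$. Differentiating the intertwining $h\circ\phi_t=\Phi_t\circ h$ of the holomorphic model of $(\phi_t)$ at $t=0$ and using the Berkson--Porta formula $\de_t\phi_t=G\circ\phi_t$ --- with $\Phi_t(w)=w+it$ in the non-elliptic case and $\Phi_t(w)=e^{\lambda t}w$ ($\lambda$ the spectral value) in the elliptic case --- one obtains, for all $z\in\D$,
\[
h'(z)\,G(z)=i\qquad\text{resp.}\qquad h'(z)\,G(z)=\lambda\,h(z).
\]
Since $h$ is univalent, $h'$ never vanishes on $\D$, so each is an identity between holomorphic functions and therefore propagates across any open set through which both sides extend holomorphically. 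I would then read the identity along the arc $A$, parametrised by $z=e^{is}$: one has $\tfrac{d}{ds}h(e^{is})=ie^{is}h'(e^{is})$, and since $G\neq0$ on $A$ the tangency condition $\Re\{\overline z\,G(z)\}=0$ of $(1)$ is equivalent to $e^{is}/G(e^{is})$ being purely imaginary, i.e.\ to $\Re\bigl(e^{is}/G(e^{is})\bigr)=0$.

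The two implications are then essentially symmetric. In the non-elliptic case, $h$ extends conformally (that is, with non-vanishing derivative) through $A$ if and only if $G=i/h'$ does; under such an extension $\tfrac{d}{ds}h(e^{is})=-e^{is}/G(e^{is})$, so $\tfrac{d}{ds}\Re h(e^{is})=-\Re\bigl(e^{is}/G(e^{is})\bigr)$ and the tangency condition is equivalent to $\Re h$ being constant on the connected arc $A$; the constant $c$ then lies in $\overline I$ since $h(\D)\subset I\times\R$ and $h$ is continuous up to $A$. In the elliptic case I would start from $h'/h=\lambda/G$, which shows that $h'/h$ extends holomorphically and zero-free through $A$ exactly when $G$ does; consequently a local primitive $\log h$, and hence $h=e^{\log h}$, extends holomorphically and zero-free through $A$, and $\tfrac{d}{ds}\bigl(\lambda^{-1}\log h(e^{is})\bigr)=ie^{is}/G(e^{is})$ gives $\Im\tfrac{d}{ds}\bigl(\lambda^{-1}\log h(e^{is})\bigr)=\Re\bigl(e^{is}/G(e^{is})\bigr)$, so the tangency condition is equivalent to $\Im\bigl(\lambda^{-1}\log h\bigr)$ being constant on $A$, which is precisely the statement that $h(A)$ lies on a spiral $\{e^{t\lambda}v:t\in\R\}$ (from $\lambda^{-1}\log h=u+ic_0$ with $u$ real one gets $h=e^{\lambda u}\,e^{i\lambda c_0}$, so one takes $v:=e^{i\lambda c_0}$).

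The point that needs genuine care --- and where the precise formulation of \cite{BrGu} enters --- is that ``$h$ extends holomorphically through $A$'' in $(2)$ must be understood as a conformal (locally univalent) extension: a merely holomorphic extension of $h$ with $\Re h\equiv c$ on $A$ can have a critical point on $A$ --- this already occurs at the tip of a vertical slit of a K\oe nigs domain --- and at such a point $G=i/h'$ blows up, so $(1)$ fails there while the bare form of $(2)$ holds. Granting the conformal reading, the equivalence is a clean transport of one and the same geometric condition between $G$ and $h$ via $Gh'=i$ (resp.\ $Gh'=\lambda h$). I expect the main work to be precisely this identification of the correct extension notion, together with the verification in the elliptic case that $h$ itself (and not merely $h'/h$) extends across $A$ and is zero-free there; the remaining bookkeeping --- that $\Re\{\overline z\,G\}=0$ corresponds to $\Re(e^{is}/G)=0$ and hence to the vanishing of $\tfrac{d}{ds}\Re h$, resp.\ of $\Im\tfrac{d}{ds}(\lambda^{-1}\log h)$ --- is routine one-variable calculus.
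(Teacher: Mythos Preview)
The paper does not contain a proof of this statement: Theorem~\ref{chac_contact_arc} is quoted from \cite[Sect.~3]{BrGu} as a background result and is used without proof in the subsequent sections. There is therefore no argument in the present paper against which to compare your proposal.

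That said, your approach is the natural one and is essentially correct. The identities $h'G=i$ in the non-elliptic case and $h'G=\lambda h$ in the elliptic case, obtained by differentiating the model intertwining at $t=0$, are exactly the bridge between $G$ and $h$, and your computation that the tangency condition $\Re\{\bar z\,G(z)\}=0$ on $A$ translates, via $\tfrac{d}{ds}h(e^{is})=-e^{is}/G(e^{is})$, into constancy of $\Re h$ on $A$ (respectively, into $h(A)$ lying on a $\lambda$-spiral) is accurate. In the elliptic direction you can also dispose of the worry about $h$ vanishing on $A$: if $h$ extended continuously to $p\in A$ with $h(p)=0$, then any sequence $z_n\to p$ in $\D$ would satisfy $h(z_n)\to 0=h(z_0)$ for the interior Denjoy--Wolff point $z_0$, and continuity of $h^{-1}$ at $0\in h(\D)$ forces $z_n\to z_0$, a contradiction.

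You are right to flag the one genuine subtlety: in the implication $(2)\Rightarrow(1)$ the formula $G=i/h'$ (resp.\ $G=\lambda h/h'$) requires $h'\neq 0$ on $A$, and a bare holomorphic extension of $h$ with $\Re h\equiv c$ does not obviously preclude a critical point on $A$. Whether this non-vanishing is established separately in \cite{BrGu}, or whether ``extends holomorphically through $A$'' there is meant conformally, is something you would need to verify against that reference rather than the present paper.
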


The relationship between contact arcs and trajectories is given by the following result. As a notation, if $A\subset \de \D$ is an open sub-arc with end points $p,q$, we denote it by $(p,q)$.

\begin{theorem} \cite[Sect. 3]{BrGu}\label{arcomass}
Let $(\phi_t)$ be a semigroup of holomorphic self-maps which is not an elliptic group. 
\begin{enumerate}
\item if $A$ is a contact arc for $(\phi_t)$ and $x\in A$, then $x$ is a regular contact point for $\phi_t$ for $t\in (0, T(x))$. Moreover, the map $t\mapsto \phi_t(x)$ is a homeomorphism of $(0,T(x))$ onto  the open sub-arc $(x, \phi_{T(x)}(x))\subseteq A$.
\item If $p\in \de \D$ is a contact point for $\phi_{t_0}$ for some $t_0>0$ and it is not a boundary fixed point,  then there exists a maximal contact arc $A$ for $(\phi_t)$ such that $p\in \overline{A}$. 
\item Let $A$ be a maximal contact arc for $(\phi_t)$ with initial point $x_0$ and final point $x_1$. Then 
\begin{itemize}
\item[(i)] either $x_0$ is a boundary fixed point of $(\phi_t)$ or $x_0$ is a contact point for $\phi_{t_0}$ for some $t_0>0$,
\item[(ii)] either $x_1$ is the Denjoy\,--\,Wolff point of $(\phi_t)$ or $\phi_t(x_1)\in \D$ for all $t>0$.
\end{itemize}
\end{enumerate}
\end{theorem}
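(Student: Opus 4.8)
The plan is to transport the entire statement to the K\oe nigs model by means of Theorem \ref{chac_contact_arc}, where the translation dynamics trivializes each assertion. I write the non-elliptic model as $(\Omega,h,\Phi_t)$ with $\Omega=I\times\R$ and $\Phi_t(w)=w+it$, so that $h\circ\phi_t=\Phi_t\circ h$; differentiating this identity at $t=0$ gives $h'(z)G(z)=i$ on $\D$. By Theorem \ref{chac_contact_arc}, along a contact arc $A$ the function $h$ extends holomorphically and $h(A)$ lies on a line $\{\Re w=c\}$ with $c\in\overline I$; since $G\neq0$ on $A$, the identity $h'G=i$ forces $h'\neq0$, so $h$ is conformal near each point of $A$, and the oriented unit field $G/|G|$ is carried by $h$ to the constant upward direction $i$ (hence the final endpoint of $A$ corresponds to the top of $h(A)$). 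I will also use that $h(\D)$ is invariant under upward translation, $h(\D)+it\subseteq h(\D)$ for $t\geq0$, and that $\bigcup_{t\geq0}(h(\D)-it)=\Omega$. The elliptic case is handled identically, replacing the vertical flow by the spiral flow $w\mapsto e^{\lambda t}w$ and the line by the spiral of Theorem \ref{chac_contact_arc}.

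To establish (1), fix $x\in A$ with $h(x)=c+i\beta$. The functional equation continues holomorphically through $A$, so $\phi_t(x)=h^{-1}(c+i(\beta+t))$ for as long as $c+i(\beta+t)$ stays in $h(A)$; this shows $\phi_t(x)\in A$ exactly for $t\in(0,T(x))$, with $T(x)$ equal to the vertical gap from $c+i\beta$ to the top of $h(A)$, and that $t\mapsto\phi_t(x)$ is the composition of the homeomorphism $t\mapsto c+i(\beta+t)$ onto the open segment above $c+i\beta$ with the local conformal inverse $h^{-1}$, hence a homeomorphism of $(0,T(x))$ onto $(x,\phi_{T(x)}(x))\subseteq A$. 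For $t\in(0,T(x))$ the map $\phi_t=h^{-1}\circ(\cdot+it)\circ h$ is a composition of conformal maps near $x$, so it extends conformally across $x$; this forces the angular derivative to be finite, i.e.\ $\al_{\phi_t}(x)<+\infty$, so $x$ is a regular contact point.

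For (3), let $h(A)\subset\{\Re w=c\}$ run from $\beta_0=\inf\Im$ to $\beta_1=\sup\Im$, so that by the orientation the initial point satisfies $x_0\leftrightarrow\beta_0$ and the final point $x_1\leftrightarrow\beta_1$. For (i): if $\beta_0>-\infty$ then for small $t_0>0$ the point $c+i(\beta_0+t_0)$ is interior to $h(A)$, and since the angular limit $\phi_{t_0}(x_0)$ exists by Theorem \ref{pavelone} and the functional equation passes to the boundary, it equals $h^{-1}(c+i(\beta_0+t_0))\in A\subset\de\D$, so $x_0$ is a contact point for $\phi_{t_0}$; if $\beta_0=-\infty$ then $h$ escapes to infinity at $x_0$ (equivalently $G\to0$ there), the signature of a boundary fixed point. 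For (ii): if $\beta_1=+\infty$ the trajectory runs up to $+i\infty$, the attracting direction of the flow, so $x_1$ is the Denjoy\,--\,Wolff point; if $\beta_1<+\infty$ then, by maximality of $A$ together with upward invariance of $h(\D)$, the open ray $\{c+i(\beta_1+t):t>0\}$ cannot lie on $\de h(\D)$ (a vertical boundary segment there would extend $A$ by Schwarz reflection) while it does lie in $\overline{h(\D)}$, hence it lies in $h(\D)$, giving $\phi_t(x_1)=h^{-1}(c+i(\beta_1+t))\in\D$ for all $t>0$.

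The remaining assertion (2) is where the real work sits. Given a contact point $p$ for $\phi_{t_0}$ that is not a boundary fixed point, the property that a trajectory entering $\D$ stays in $\D$ gives $T(p)\geq t_0$, so by Theorem \ref{pavelone} the continuous non-constant curve $t\mapsto\phi_t(p)$, $t\in[0,t_0]$, sweeps a non-degenerate open sub-arc $J=(p,\phi_{t_0}(p))\subset\de\D$. The crux is to prove that $J$ is a contact arc, i.e.\ that $G$ extends holomorphically across $J$ with $\Re\{\overline wG(w)\}=0$ and $G\neq0$; then $J$ embeds in a maximal contact arc $A$ (a union of a chain of contact arcs is again a contact arc, and it is a proper arc since the semigroup is not an elliptic group), and $p\in\overline J\subseteq\overline A$. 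I expect this holomorphic extension of the generator to be the main obstacle, being the one genuinely analytic step: one must upgrade the bare existence of the angular value $\phi_{t_0}(p)\in\de\D$ to boundary regularity of the flow, for instance by showing through the Julia\,--\,Wolff\,--\,Carath\'eodory theorem that each interior point $\phi_s(p)$ with $0<s<t_0$ is a regular contact point and that $h$ continues holomorphically across $J$ with $\Re h$ constant, at which point Theorem \ref{chac_contact_arc} identifies $J$ as a contact arc and the rest follows as above.
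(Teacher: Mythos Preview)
The paper does not prove this theorem: it is quoted verbatim from \cite[Sect.~3]{BrGu} and used as a black box, so there is no ``paper's own proof'' to compare against. What I can do is assess your proposal on its own terms.

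Your treatment of parts (1) and (3) is essentially right in spirit, though several steps are compressed. In (1) you silently identify $T(x)$ with the vertical gap from $h(x)$ to the top of $h(A)$; this is only justified once (3)(ii) is established, so the logical order should be reversed. In (3)(ii) the sentence ``a vertical boundary segment there would extend $A$ by Schwarz reflection'' is not a proof: you would need to show that $h$ actually extends holomorphically across the putative new boundary segment, which is precisely the content of the reflection argument and requires knowing that the segment lies on $\partial h(\D)$ in the right sense. Likewise in (3)(i) the claim that $\beta_0=-\infty$ forces $x_0$ to be a boundary fixed point is asserted (``the signature of a boundary fixed point'') rather than argued; it ultimately rests on results like Theorem~\ref{pavel}, which in this paper is itself imported from \cite{P}.

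The genuine gap is part (2), and you say so yourself. You correctly isolate the analytic crux---upgrading the mere existence of the angular limit $\phi_{t_0}(p)\in\partial\D$ to holomorphic extension of $G$ (equivalently of $h$) across the swept arc $J$---but you do not carry it out. The sketch ``showing through the Julia--Wolff--Carath\'eodory theorem that each interior point $\phi_s(p)$ is a regular contact point and that $h$ continues holomorphically across $J$'' hides the entire difficulty: Julia--Wolff--Carath\'eodory gives angular derivatives, not holomorphic extension, and passing from finite angular derivative of $\phi_t$ at a single point to holomorphic extension of $h$ on an arc is exactly the non-trivial step proved in \cite{BrGu}. As it stands, (2) is an outline of what needs to be shown, not a proof.
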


\begin{definition}
Let  $(\phi_t)$ be a semigroup of holomorphic self-maps which is not an elliptic group. A maximal contact arc for $(\phi_t)$ whose final point is the  Denjoy\,--\,Wolff point of $(\phi_t)$ is called an {\sl exceptional maximal contact arc}.
\end{definition}
Clearly, a semigroup might have only two exceptional maximal contact arcs, and elliptic semigroups do not have any exceptional maximal contact arc. The life-time of a trajectory is continuous on each maximal contact arc:

\begin{proposition}\label{time-cont-max}
Let $(\phi_t)$ be a semigroup of holomorphic self-maps which is not an elliptic group and $A$ a maximal contact arc. 
\begin{enumerate}
\item if $A$ is a  exceptional, then $T(x)=\infty$ for all $x\in \overline{A}$.
\item if $A$ is not exceptional, then the function
$$
\overline{A}\ni x\mapsto T(x) \in [0,+\infty]
$$
is continuous and strictly decreasing (with respect to the natural orientation of $A$ induced by $(\phi_t)$). Moreover, $T$ is bounded from above on $\overline{A}$ if an only if the initial point of $A$ is not a fixed point of $(\phi_t)$.
\end{enumerate}
\end{proposition}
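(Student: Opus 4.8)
The plan is to transfer everything to the holomorphic model via the Kœnigs function, where a maximal contact arc becomes (by Theorem~\ref{chac_contact_arc}) a subinterval of a vertical line $\{\Re\zeta=c\}$ in $\Omega=I\times\R$, and the semigroup acts by vertical translation $\zeta\mapsto\zeta+it$. Fix a maximal contact arc $A$ with initial point $x_0$ and final point $x_1$, and let $c\in\overline I$ be the real part guaranteed by Theorem~\ref{chac_contact_arc}(2). By Theorem~\ref{arcomass}(1), for $x\in A$ the map $t\mapsto\phi_t(x)$ is a homeomorphism of $(0,T(x))$ onto the sub-arc $(x,\phi_{T(x)}(x))\subseteq A$; pushing this forward by $h$, the image $h(A)$ is an open segment on the line $\Re\zeta=c$, and if we parametrize $h(x)=c+i\,\beta(x)$ for $x\in A$, then $\beta$ is a continuous strictly monotone bijection of $A$ onto an open interval $(b_0,b_1)\subseteq\R$ (monotone because $h$ is injective on $A$ and $A$ is an arc; I would fix the orientation so that increasing $t$ increases the imaginary part, which is exactly the natural orientation induced by $G/|G|$). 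The key identity is that for $x\in A$ and $0\le t<T(x)$ we have $h(\phi_t(x))=h(x)+it=c+i(\beta(x)+t)$, so $\phi_t(x)=\beta^{-1}(\beta(x)+t)$ stays on the arc precisely as long as $\beta(x)+t<b_1$; hence
\[
T(x)=b_1-\beta(x)\qquad\text{for all }x\in A.
\]
This already gives part~(2): $T$ is continuous and strictly decreasing on $A$ because $\beta$ is continuous and strictly increasing, and $\sup_A T=b_1-b_0$, which is finite iff $b_0>-\infty$.

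For the endpoint behavior I would argue as follows. Continuity of $T$ at $x_1$ (the final point): by Theorem~\ref{arcomass}(3)(ii), either $x_1$ is the Denjoy--Wolff point, in which case $A$ is exceptional and case~(1) applies (and there is nothing to prove for part~(2)), or $\phi_t(x_1)\in\D$ for all $t>0$, so $T(x_1)=0$; on the other hand $b_1-\beta(x)\to 0$ as $x\to x_1$ along $A$ since $\beta(x)\to b_1$. The only subtlety is justifying $\beta(x)\to b_1$, i.e. that $h$ extends continuously to $x_1$ with $h(x_1)=c+ib_1$ as a boundary value along the arc; this follows because $h$ extends holomorphically across the \emph{open} arc $A$ and $\beta$ is monotone, so $\lim_{x\to x_1^-}\beta(x)$ exists in $(-\infty,+\infty]$ and equals $\sup\beta=b_1$, which is finite exactly when $T$ does not blow up at $x_1$ — and indeed it cannot blow up there since $T(x_1)=0$. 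Continuity at the initial point $x_0$ and the ``bounded iff'' dichotomy: again $\lim_{x\to x_0^+}\beta(x)=b_0\in[-\infty,+\infty)$, so $T(x)\to b_1-b_0$, and this limit is finite (equivalently $T$ is bounded on $\overline A$, equivalently $T(x_0)<\infty$) precisely when $b_0>-\infty$. So I must match $b_0>-\infty$ with ``$x_0$ is not a fixed point of $(\phi_t)$''. By Theorem~\ref{arcomass}(3)(i), $x_0$ is either a boundary fixed point or a contact point of some $\phi_{t_0}$; if $x_0$ is a boundary fixed point then the trajectory through $x_0$ is constant, $T(x_0)=\infty$, forcing $b_0=-\infty$; conversely if $x_0$ is a contact point (not fixed), then $\phi_{t_0}$ maps $x_0$ to a point of $\overline A$ at finite imaginary-part distance, which combined with the translation structure shows $b_0>-\infty$ and $T(x_0)=b_1-b_0<\infty$, so $T$ is bounded on $\overline A$.

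Part~(1) is the easy case and I would dispatch it first: if $A$ is exceptional, its final point $x_1$ is the Denjoy--Wolff point, which is a fixed point, so $T(x_1)=\infty$; and by Theorem~\ref{arcomass}(3)(i) the initial point $x_0$ is either a fixed point ($T(x_0)=\infty$) or, if not, then I claim $T(x_0)=\infty$ anyway because the whole arc lies on the line $\Re\zeta=c$ and the forward translates $h(x_0)+it$ never leave the segment $h(A)$ on the side of the final point (there is no finite ``top'' to the segment since $x_1$ is the Denjoy--Wolff point, i.e. $b_1=+\infty$); hence $T\equiv\infty$ on $A$, and continuity at $\overline A$ gives $T\equiv\infty$ on $\overline A$.

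\medskip

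The step I expect to be the main obstacle is the precise control of $h$ near the \emph{endpoints} $x_0,x_1$ of $A$ — these lie outside the open arc through which $h$ extends holomorphically, so one only has one-sided radial/arc limits, and one must rule out pathological oscillation of $\Im h$ along the arc. Monotonicity of $\beta$ (from injectivity of $h$ on the arc plus the interval structure of $A$) is what saves the day, converting ``limit exists'' into a triviality; but pinning down that $b_1<\infty$ exactly when $x_1$ is not the Denjoy--Wolff point, and $b_0>-\infty$ exactly when $x_0$ is not a fixed point, requires carefully invoking Theorem~\ref{arcomass}(3) and the absorbing property \eqref{absorbing} of the model rather than any quantitative estimate.
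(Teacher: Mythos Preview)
Your approach is essentially the paper's: pass to the K\oe nigs model, identify $h(A)$ with a vertical segment $\{c+is:s\in(b_0,b_1)\}$, and derive the formula $T(x)=b_1-\Im h(x)$ on the open arc (the paper writes $R$ for your $b_1$). The only substantive divergence is in the treatment of the initial endpoint~$x_0$. In the fixed-point case the paper does not deduce $b_0=-\infty$ from $T(x_0)=\infty$ as you do --- that implication does not follow, since the formula $T=b_1-\beta$ is valid only on the \emph{open} arc --- but instead invokes \cite[Prop.~2.11(iii), Lemma~2.10]{BrGu} (the content recorded afterwards as Proposition~\ref{arc-koenig}(2)) to get $\Im h(A)$ unbounded below directly. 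In the non-fixed case, rather than computing $b_0$ and checking $T(x_0)=b_1-b_0$, the paper exploits the continuity of the boundary trajectory $s\mapsto\phi_s(x_0)$ (Theorem~\ref{pavelone}) and a short contradiction to show $\lim_{s\to 0^+}T(\phi_s(x_0))=T(x_0)$; your direct route can be completed as well once one notes $T(x_0)=s+T(\phi_s(x_0))$ for small $s>0$. You correctly flag this endpoint analysis as the crux.
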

\begin{proof} (1) follows immediately from Theorem \ref{arcomass}.

(2) We prove the result in case $(\phi_t)$ is non-elliptic with holomorphic model $(I\times \R,h,z+it)$, the elliptic case follows similarly. 

By Theorem \ref{chac_contact_arc}, $h$ extends holomorphically through $A$ and therefore it follows that for all $x\in A$ and $t\geq 0$ it holds $h(\phi_t(x))=h(x)+it$.  Moreover, there exists $c\in \overline I$ such that $\Re h(z)=c$ for all $z\in A$. Let $R:=\sup\{ s\in \R: c+is \in  h(A)\}$.  Note that $R<\infty$, since $A$ is not exceptional. Then
\begin{equation}\label{TR}
T(x)=R-\Im h(x).
\end{equation}
Now let $y\in A$. By Theorem  \ref{chac_contact_arc}, the map $t\mapsto \phi_t(y)$ is an homeomorphism of $(0,T(y))$ onto  the open sub-arc $B:=(y, \phi_{T(y)}(y))$ of $A$. Let $t(x)$ denote its inverse. Hence, for $x\in B$
\[
T(x)=T(\phi_{t(x)}(y))=R-\Im h(\phi_{t(x)}(y))=R-\Im h(y)-t(x),
\]
proving that $T$ is continuous and strictly decreasing in $B$ (with respect to the natural orientation of $A$ induced by $(\phi_t)$). By the arbitrariness of $B$, it follows that $T$ is continuous and strictly decreasing in $A$. 

If $x_1$ is the final point of $A$, since $A$ is not exceptional, $T(x_1)=0$, and the previous argument shows that $T$ is continuous on $A\cup\{x_1\}$ as well. 

Now we examine the initial point $x_0$. There are two cases. Either $x_0$ is a fixed point of $(\phi_t)$ or $x_0$ is a contact (not fixed) point of $(\phi_t)$. In the first case $T(x_0)=\infty$, while, in the second case, according to \cite[Remark 3.4]{BrGu}, $T(x_0)<\infty$. 
Let 
\[
S:=\lim_{A\ni x\to x_0} T(x).
\]
Since $T$ is strictly decreasing, such a limit exists, finite or infinite. 

Assume first that $T(x_0)=\infty$. By \cite[ Proposition 2.11(iii) and Lemma 2.10]{BrGu},  $h(A)=\{c+is: s\in (-\infty, R)\}$. Hence, $S=\infty$ by \eqref{TR}.

Assume next $T(x_0)<\infty$. The curve $[0,\infty)\ni s \mapsto \phi_s(x_0)$ is continuous, and  $T$ is continuous on $A$. Hence, $\lim_{s\to 0}T(\phi_s(x_0))=S$, and, clearly $S\leq T(x_0)$ by the definition of life time. On the other hand, if $S<T(x_0)$, let $t\in (S, T(x_0))$. Taking into account that $T$ is strictly decreasing on $A$, it follows that $\phi_t(\phi_s(x_0))\in \D$ for all $s>0$, hence $\phi_t(x_0)\in \D$, a contradiction.
\end{proof}

For our aims, we need to recall from \cite{BrGu} how the maximal contact arcs are related to the K\oe nigs function of the hyperbolic and starlike elliptic models:

\begin{proposition}  \cite[Sect. 3]{BrGu} \label{arc-koenig}
Let $(\phi_t)$ be a semigroup of holomorphic self-maps of $\D$. Let $(\Omega, h, \Phi_t)$ be a holomorphic model of $(\phi_t)$. 
\begin{enumerate}
\item If $(\phi_t)$ is a starlike elliptic semigroup, not a group, then the initial point of a maximal contact arc $A$ is a fixed point if and only if the image of $h(A)$ is not bounded.
\item If $(\phi_t)$ is a non-elliptic semigroup,  the initial point of a maximal contact arc $A$ is a fixed point if and only if $\Im h(A)$ is not bounded from below. Moreover, if  $\Omega=I\times \R$, then $A$ is exceptional  if and only if either $I=(c,a)$ with $c\in \R, a\in (c,+\infty]$    or $I=(a,c)$  with $c\in \R$ and $a\in [-\infty, c)$, and there exists $d\in [-\infty, +\infty)$ such that  $\{c+it: t>d\}= h(A)$.
\end{enumerate}
\end{proposition}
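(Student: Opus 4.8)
The plan is to transport the whole question to the holomorphic model via the K\oe nigs function $h$ and to read off every assertion from the image $h(A)$. Fix a maximal contact arc $A$ with initial point $x_0$ and final point $x_1$. By Theorem~\ref{chac_contact_arc}, $h$ extends holomorphically through $A$: in the non-elliptic model $(\Omega=I\times\R,\,h,\,z+it)$ with $\Re h\equiv c$ on $A$ for some $c\in\overline I$, and in the starlike elliptic model $(\C,\,h,\,z\mapsto e^{\lambda t}z)$ with $h(A)$ inside a logarithmic spiral. Differentiating $h\circ\phi_t=\Phi_t\circ h$ at $t=0$ gives $h'G=i$ (resp. $h'G=\lambda h$) on $\D$, hence on $A$ by continuity; so $h'\neq0$ on $A$, and integrating along the flow, $h(\phi_t(x))=h(x)+it$ (resp. $=e^{\lambda t}h(x)$) for $x\in A$ and $0\le t<T(x)$.

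First I would fix the geometry of $h|_A$. The natural orientation of $A$ is the flow direction $G/|G|$, so the formulas above say that in this orientation $\Im h$ is strictly increasing (resp. $|h|$ is strictly decreasing) along each flow segment; by Theorem~\ref{arcomass}(1) these flow segments cover all of the sub-arc of $A$ from a given point up to $x_1$, so the monotonicity holds along the whole of $A$. Thus $h$ maps $A$ homeomorphically, respecting orientations, onto a vertical segment $\{c\}\times(\alpha,\beta)$ (resp. a sub-arc of the spiral), and the one-sided boundary values $w_0:=\lim_{A\ni x\to x_0}h(x)$, $w_1:=\lim_{A\ni x\to x_1}h(x)$ exist in $\overline\C$. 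Moreover $h(A)\cap h(\D)=\emptyset$ — a coincidence point, approached radially from inside $\D$ and handled by the open mapping theorem at its preimage in $\D$, would violate injectivity of $h|_\D$ — so, $h(\D)$ being open, neither $w_0$ nor $w_1$ lies in $h(\D)$.

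Next I would identify the endpoints. For $x_1$: by definition $A$ is exceptional iff $x_1$ is the Denjoy--Wolff point, which in the non-elliptic model is exactly the end ``$+i\infty$'' (for elliptic semigroups the Denjoy--Wolff point lies in $\D$, so $x_1$ is never it); hence, in the non-elliptic case, $A$ is exceptional iff $w_1=\infty$, i.e. $\beta=+\infty$, i.e. $h(A)=\{c+it:t>\alpha\}$, and with $d:=\alpha\in[-\infty,+\infty)$ this is the stated description. That $c$ must then be an endpoint of $I$ with $I$ contained in one of the two half-lines it bounds (the alternative $I=(c,a)$ or $I=(a,c)$) follows because otherwise the vertical ray $h(A)$ would be a slit in the interior of the simply connected absorbing domain $h(\D)$ around which $h(\D)$ closes up near $+i\infty$, which is impossible. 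For $x_0$: passing the relation $h\circ\phi_t=\Phi_t\circ h$ to the boundary at $x_0$, the prime-end value of $\phi_t(x_0)$ is $\Phi_t(w_0)$. If $w_0$ is finite then $\Phi_t(w_0)\neq w_0$ for $t>0$ (translations have no finite fixed point, and the only fixed point of $z\mapsto e^{\lambda t}z$, namely $0$, lies in $h(\D)$ and was excluded above), so $\phi_t(x_0)\neq x_0$ and $x_0$ is not a boundary fixed point; if $w_0=\infty$ then $\Phi_t(w_0)=w_0$, so $\phi_t(x_0)=x_0$ and $x_0$ is a boundary fixed point. Since ``$w_0=\infty$'' is exactly ``$\Im h(A)$ unbounded below'' (resp. ``$h(A)$ unbounded''), this gives the first assertion of (1) and (2). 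The elliptic case (1) can also be reduced to the non-elliptic analysis by composing with a branch of $\log$, which turns the spiral model into a translation model.

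The hard part is the passages to the limit at $x_0$ — the one point of $\overline A$ not on the open arc through which $h$ extends, and one that is reached only tangentially, so a priori neither $h$ nor the maps $\phi_t$ have regularity there and an identity like $h(\phi_t(x_0))=\Phi_t(h(x_0))$ cannot simply be asserted. The clean way to make this rigorous — and hence to justify the equivalence ``$x_0$ is a boundary fixed point $\Leftrightarrow w_0=\infty$'' — is to work in Carath\'eodory's prime-ends compactification of $h(\D)\subset\Omega$, in which $h$ extends to a homeomorphism and the trajectory $t\mapsto\phi_t$ acts continuously; this is exactly the technical device underlying Theorem~\ref{main}. The remaining steps are the elementary bookkeeping sketched above.
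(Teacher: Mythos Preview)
This proposition is not proved in the paper; it is quoted directly from \cite[Sect.~3]{BrGu}, so there is no in-paper argument to compare against. Your outline is therefore being measured against the original source, and it captures the right picture: push everything through $h$, identify $h(A)$ as a vertical segment (or spiral arc), and read off the nature of the endpoints from whether the corresponding boundary values of $h$ are finite or infinite.

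Two points deserve sharpening. First, your argument that $c$ must be an endpoint of $I$ when $A$ is exceptional --- ``$h(\D)$ closes up around the slit near $+i\infty$, which is impossible'' --- is not a proof as written; a slit domain can perfectly well be simply connected. The clean way is: $h(\D)$ is invariant under $z\mapsto z+it$ for $t\ge0$, so its complement in $\Omega$ is invariant under $z\mapsto z-it$. Since the upward ray $\{c+it:t>d\}$ lies in that complement, so does the entire vertical line $\{\Re z=c\}$. If $c$ were interior to $I$ this line would disconnect $\Omega$, forcing the connected set $h(\D)$ into one of the two half-strips, which contradicts the absorbing property $\bigcup_{t\ge0}(h(\D)-it)=\Omega$.

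Second, the passage to the limit at $x_0$ is, as you say, the real content; you invoke prime ends but do not carry the argument through, and your one-line justification ``$\Phi_t(w_0)=w_0$ implies $\phi_t(x_0)=x_0$'' is exactly the step that needs work, since $w_0$ is a \emph{tangential} boundary value of $h$ while $\phi_t(x_0)$ is defined via non-tangential limits. In the paper (see the proofs of Proposition~\ref{time-cont-max} and Proposition~\ref{prime-exc}) the corresponding boundary statements are handled by appealing to \cite[Lemma~2.10, Prop.~2.11]{BrGu}, which establish precisely that the limit of $h$ along $A$ toward $x_0$ agrees with the non-tangential limit of $h$ at $x_0$; combined with Theorem~\ref{pavel}(3) this yields ``$w_0=\infty\Leftrightarrow x_0$ is a boundary fixed point'' directly. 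Without either reproving that lemma or invoking it, your sketch remains a sketch at exactly the place you flagged.
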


\subsection{Boundary fixed points}

Let $(\phi_t)$ be a semigroup of holomorphic self-maps of $\D$. Assume $(\phi_t)$ is either non-elliptic  with holomorphic model $(\Omega, h, z\mapsto z+it)$ or  starlike elliptic with the model $(\C, h, z\mapsto e^{-t}z)$. Let  $Q:=h(\D)$. Set
\begin{equation}\label{setA}
A(\phi_t):=\begin{cases}
\cap_{t\geq 0} (e^{- t}Q) & \hbox{ if $(\phi_t)$ is starlike elliptic},\\
\cap_{t\geq 0} (Q+it) & \hbox{ if $(\phi_t)$ is non-elliptic}.
\end{cases}
\end{equation}

Following ideas from \cite[Thm. 2.5]{CD}, it is possible to prove that, given a non-elliptic semigroup, if for some $c\in \R$ the line $\{\zeta\in \C: \Re \zeta=c\}$ is contained in $A(\phi_t)$ then $p:=\lim_{t\to -\infty} h^{-1}(c+it)$ is a boundary fixed point of the semigroup. 
Next result shows a characterization of those boundary fixed points that can be reached in this way.
 
\begin{proposition}\label{compon-bound}
Let $(\phi_t)$ be a non-elliptic semigroup of holomorphic self-maps of $\D$. Let $C$ be a connected components of $A(\phi_t)$. Then  $C=C+it$, for all $t>0$, and  there exist $ -\infty\leq a \leq b\leq +\infty$ such that $\overline C=\{\zeta\in \C: a\leq\Re \zeta\leq b\}$.

Moreover, there is a one-to-one correspondence between boundary regular fixed points of $(\phi_t)$, different from the Denjoy\,--\,Wolff point, and connected components of $A(\phi_t)$ for which  there exist $-\infty< a<b<+\infty$ such that $\overline C=\{\zeta\in \C: a\leq\Re \zeta\leq b\}$ associating to $C$ the point $p(C):=\lim_{t\to -\infty} h^{-1}(c+it)$, where $c$ is any point in $(a,b)$, and $p(C)\in \de \D$ is a boundary regular fixed point of $(\phi_t)$.

Also, there is a one-to-one correspondence between boundary super-repelling fixed points of $(\phi_t)$ which are not inital points of a maximal contact arc of $(\phi_t)$ and connected components of $A(\phi_t)$ given by a line associating to  the connected component $C=\{\zeta\in \C: \Re \zeta=c\}$ the point $p(C):=\lim_{t\to -\infty} h^{-1}(c+it)$,  and $p(C)\in \de \D$ is a boundary super-repelling fixed point of $(\phi_t)$.
\end{proposition}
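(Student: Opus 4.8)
The plan is to exploit the holomorphic model $(\Omega, h, z\mapsto z+it)$ with $Q=h(\D)$ throughout, so that every statement about boundary fixed points of $(\phi_t)$ translates into a statement about the geometry of $Q$ and the sets $Q+it$. The first order of business is the structural claim about the connected components of $A(\phi_t)=\cap_{t\ge0}(Q+it)$. Since $A(\phi_t)$ is by construction invariant under the translations $\zeta\mapsto\zeta+it$ for $t\ge0$, and since it is the decreasing intersection of the open sets $Q+it$ (which are nested, $Q+is\subseteq Q+it$ for $t\ge s$, because $h(\D)$ is $\Phi_t$-invariant), one gets a closed-in-$\C$, or at least relatively nice, set that is a union of vertical half-lines going up; the downward invariance has to be argued more carefully. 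The key point is that a connected component $C$ of $A(\phi_t)$, being translation-invariant upward and connected, must be of the form $\{\zeta : a\le\Re\zeta\le b\}$ (possibly a single line $a=b$, possibly with $a=-\infty$ or $b=+\infty$): one shows $C+it=C$ for \emph{all} real $t$ by noting that $A(\phi_t)$ contains, with any point $\zeta$, the whole ray $\zeta+i[0,\infty)$, and then using connectedness to propagate downward; then the projection of $C$ to the real axis is an interval, and $C$ is exactly the preimage of that interval since $C$ is closed and vertically invariant.

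Next I would set up the correspondence. Given a connected component $C=\{a\le\Re\zeta\le b\}$ with $-\infty<a<b<+\infty$, pick $c\in(a,b)$ and define $p(C):=\lim_{t\to-\infty}h^{-1}(c+it)$. That this limit exists and lies in $\de\D$ is exactly the statement alluded to just before the proposition, following \cite[Thm. 2.5]{CD}: the vertical line $\{\Re\zeta=c\}$ sits inside $A(\phi_t)$, hence inside $Q$, and $h^{-1}$ restricted to that line is a trajectory of the semigroup traversed backward, so it lands at a point of $\de\D$ which (because the whole line, not just a horostrip, is in $Q$) is a boundary fixed point; the finiteness of the ``width'' $b-a$ is what forces $p(C)$ to be \emph{regular} — it gives a genuine sector/horocycle estimate near $p(C)$, so $\alpha_{\phi_t}(p(C))<\infty$ via Julia--Wolff--Carath\'eodory. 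Independence of the choice of $c\in(a,b)$ is immediate since two such lines are joined by a translate inside $A(\phi_t)$, and $h^{-1}$ is continuous up to the relevant boundary arc. For surjectivity one runs the argument in reverse: given a boundary regular fixed point $p\ne$ DW point, regularity means $h$ maps a sector at $p$ into $\Omega$ with a two-sided estimate on $\Re h$, producing a vertical strip of positive finite width inside $A(\phi_t)$ whose component contains that strip; injectivity follows because distinct components have disjoint real-parts ranges, and distinct backward landing points come from disjoint strips. The analogous statement for super-repelling fixed points not starting a maximal contact arc is proved the same way, except the component $C$ is a single line $\{\Re\zeta=c\}$: a super-repelling fixed point corresponds to a strip that has degenerated to a line ($b-a=0$, i.e. $\alpha=\infty$), and the hypothesis ``not the initial point of a maximal contact arc'' is precisely what rules out the alternative from Theorem \ref{arcomass}(2)(3), namely that the line $\{\Re\zeta=c\}$ actually lies on the boundary $\de Q$ and corresponds to a contact arc rather than to a point reached from the interior.

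The main obstacle I expect is the super-repelling case, and specifically disentangling the two ways a line $\{\Re\zeta=c\}$ can appear in relation to $Q$: it can be \emph{interior} to $Q$ (a component of $A(\phi_t)$, giving a super-repelling fixed point reached as a backward limit $h^{-1}(c+it)$), or it can lie along $\de Q$ and correspond to a maximal contact arc via Theorem \ref{chac_contact_arc}. I would handle this by a careful case analysis on whether $\{\Re\zeta=c\}\subseteq Q$ or $\{\Re\zeta=c\}\subseteq\de Q$ — the hypothesis in the proposition singles out exactly the first, and one must check these are mutually exclusive and exhaustive among lines in $\overline{A(\phi_t)}$, invoking Proposition \ref{arc-koenig}(2) to identify the contact-arc situation. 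A secondary technical point is the existence and boundary-landing of $\lim_{t\to-\infty}h^{-1}(c+it)$ together with the dichotomy regular/super-repelling according to finite/infinite width; this is where the quoted argument from \cite[Thm. 2.5]{CD} and the Julia--Wolff--Carath\'eodory machinery do the real work, and I would cite it rather than reprove it, focusing the written proof on the component structure and the bijectivity bookkeeping.
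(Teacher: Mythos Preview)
Your treatment of the component structure and of the correspondence with boundary \emph{regular} fixed points matches the paper's: both defer to \cite[Thm.~2.5]{CD} for the existence of $p(C)=\lim_{t\to-\infty}h^{-1}(c+it)$ and for the regular-versus-super-repelling dichotomy according to whether the component has positive or zero width.

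The gap is in the super-repelling case. You frame the problem as deciding whether the line $\{\Re\zeta=c\}$ lies in $Q$ or in $\de Q$, but that is not the issue: a connected component of $A(\phi_t)$ is by definition contained in $Q$, so any line component is interior. The real question is exclusivity at the level of the boundary fixed point $p$: why can a super-repelling $p$ that is the initial point of a maximal contact arc \emph{not also} arise as $p(C)$ for some interior line component $C=\{\Re\zeta=c\}\subset A(\phi_t)$? Your case analysis does not rule this out, and neither Theorem~\ref{arcomass} nor Proposition~\ref{arc-koenig} gives it directly.

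The paper settles this with a concrete contradiction that you do not anticipate. One assumes $p$ is super-repelling, starts a maximal contact arc $A$, and satisfies $p=\lim_{t\to-\infty}h^{-1}(c+it)$ for an interior line. One then chooses $t_n\to-\infty$ so that the connected component of $h(\D)\cap(\R+it_n)$ through $c+it_n$ is an interval $(c-\alpha_n,c+\beta_n)$ with $\alpha_n,\beta_n\downarrow 0$ (this uses that $p$ is super-repelling, hence the component has zero width). The crosscuts $C_n=h^{-1}\big((c-\alpha_n,c+\beta_n)+it_n\big)$ have diameters tending to zero by the no-Koebe-arcs theorem, so for large $n$ an endpoint of $C_n$ lands on the arc $A$. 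But then Theorem~\ref{chac_contact_arc}(2) forces $\Re h\equiv c+\beta_n$ on all of $A$, contradicting the strict monotonicity $\beta_{n+1}<\beta_n$. This geometric argument---linking the shrinking horizontal width at the line component to a forced landing on the contact arc---is the missing ingredient in your plan. The other direction (a super-repelling $p$ not starting a contact arc must come from a line component) the paper does not reprove either; it cites \cite[Lemma~4.3]{BrGu}.
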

\begin{proof}
 The case of boundary regular fixed points  is in \cite[Thm. 2.5]{CD}. 
 
By \cite[Lemma 4.3]{BrGu}, if $p$ is a super-repelling boundary fixed point of $(\phi_t)$ for which there is no connected component of $A(\phi_t)$ of the form $ C=\{\zeta\in \C: \Re \zeta=c\}$ with $p:=\lim_{t\to -\infty} h^{-1}(c+it)$ then $p$ is the initial point of a maximal contact arc. 
 Conversely, assume by contradiction that $p$ is a boundary super-repelling fixed point which is the initial point of a maximal contact arc $A$ and there exists $c\in \R$ such that  
$p:=\lim_{t\to -\infty} h^{-1}(c+it)$.  

Take a sequence $(t_n)$ converging to $-\infty$ such that the connected component of $h(\D)\cup (\R+it_n)$ containing $c+it_n$ is of the form $(c-\alpha_n,c+\beta_n)$, where $0\leq \alpha_{n+1}< \alpha_n$ and $0\leq \beta_{n+1}< \beta_n$ for all $n\in \N$. Since $p$ is super-repelling, the sequences $(\alpha_n)$ and $(\beta _n)$ converge to zero. Consider the Jordan arc $C_n=h^{-1}((c-\alpha_n,c+\beta_n))$. Since $h$ has no Koebe arcs, the diameter of $(C_n)$ tends to zero. Therefore, for $n$ large enough, one of the end points of the Jordan arc $C_n$ belongs to $A$, say this end point is $\lim_{C_n\ni z\to c+\beta_n} h^{-1}(z)$. By Theorem \ref{chac_contact_arc}(2), $\Re h(z) =c+\beta_n=c+\beta_{n+1}$ for all $z\in A$, contradicting $\beta_{n+1}< \beta_n$.
\end{proof}

A similar statement holds for the starlike elliptic case, replacing open strips $\{z\in \C: a\leq\Re z\leq b\}$ with angles $\{z\in \C: a\leq \arg z\leq b\}$ and vertical lines with half-lines $\{z=tv: t\geq 0\}$, $v\in \C\setminus\{0\}$. 

\begin{definition}
Let $(\phi_t)$ be a semigroup of holomorphic self-maps of $\D$. A boundary super-repelling fixed point $p\in \de \D$ for $(\phi_t)$ is called
\begin{enumerate}
\item a boundary super-repelling fixed point {\sl of first type} if  it corresponds to a connected component of $A(\phi_t)$ which is a line,
\item a boundary super-repelling fixed point {\sl of second type} if  it is the initial point of a maximal contact arc for $(\phi_t)$ which is not exceptional. \item a boundary super-repelling fixed point {\sl of third type} if  it is the initial point of an exceptional  maximal contact arc for $(\phi_t)$.
\end{enumerate}
\end{definition}

Note that in case (1) there exists a backward orbit of $(\phi_t)$ which lands at $p$. Indeed, if $p$ corresponds to the connected component $C=\{\zeta\in \C: \Re \zeta=c\}$ of $A(\phi_t)$, then the curve $[0,+\infty]\ni t\mapsto h^{-1}(c+it)\in \D$ is a backward orbit of $(\phi_t)$ landing at $p$.

By \cite[Lemma 4.3]{BrGu}, a super-repelling fixed point is necessarily one of the above types. In particular, we have the following result:

\begin{corollary}\label{super-rep-divide}
Let $(\phi_t)$ be a semigroup of holomorphic self-maps of $\D$. A boundary super-repelling fixed point $p\in \de \D$ for $(\phi_t)$ is  either  the starting point of a maximal contact arc or there exists a backward orbit of $(\phi_t)$ landing at $p$.
\end{corollary}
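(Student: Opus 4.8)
The plan is to read the dichotomy off directly from the trichotomy for super-repelling boundary fixed points that has just been set up, so that the proof reduces to a short case analysis with no new estimates.

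First I would recall from \cite[Lemma 4.3]{BrGu} that a boundary super-repelling fixed point $p\in\de\D$ of $(\phi_t)$ is necessarily of first, second, or third type in the sense of the preceding Definition. If $p$ is of second or third type, then by definition it is the initial point of a maximal contact arc for $(\phi_t)$ (exceptional or not), so the first alternative of the statement holds and there is nothing more to do. (For an elliptic group the statement is vacuous, since such a semigroup has no boundary fixed points, so I may assume $(\phi_t)$ is not an elliptic group, which is the setting in which the three types were defined.)

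It then remains to handle the first type. Here $p$ corresponds to a connected component $C$ of $A(\phi_t)$ which is a line $\{\zeta\in\C:\Re\zeta=c\}$ in the non-elliptic case (resp. a half-line $\{tv:t\geq 0\}$ in the starlike elliptic case), and $p=\lim_{t\to-\infty}h^{-1}(c+it)$ (resp. the analogous limit along the ray). In this situation the remark following the Definition of the three types already exhibits an explicit backward orbit of $(\phi_t)$ landing at $p$, namely the image under $h^{-1}$ of the appropriate sub-ray of $C$; the only things to note are that this curve is well defined, because $C\subseteq A(\phi_t)\subseteq h(\D)$ and $C=C+it$ for all $t\geq 0$ by Proposition~\ref{compon-bound}, and that $h$ conjugates $\phi_t$ to the translation $z\mapsto z+it$ (resp. the scaling $z\mapsto e^{-t}z$), which turns this curve into a backward orbit whose landing point is $p$ by the very definition of $p$. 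Thus the second alternative holds, and the two cases together give the claim.

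The only substantive ingredient is the trichotomy from \cite[Lemma 4.3]{BrGu}, i.e. the fact that a super-repelling fixed point cannot fail to be of one of the three listed types; granting that, the argument above is purely bookkeeping. The one point worth spelling out carefully is the starlike elliptic case, where one invokes the analogue of Proposition~\ref{compon-bound} mentioned right after its proof, with vertical lines replaced by half-lines through the origin and the translation replaced by the scaling $z\mapsto e^{-t}z$.
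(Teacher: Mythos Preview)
Your proposal is correct and is essentially the argument the paper gives: the corollary is stated immediately after the remark that type-one super-repelling points admit an explicit backward orbit via $h^{-1}$ of the corresponding line in $A(\phi_t)$, together with the observation (from \cite[Lemma 4.3]{BrGu}) that every super-repelling boundary fixed point is of one of the three types. Your write-up just spells out this bookkeeping a bit more carefully, including the elliptic analogue.
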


\begin{remark}
If $(\phi_t)$ is elliptic, then there are no boundary super-repelling fixed points of third type, while, if $(\phi_t)$ is non-elliptic then there are at most two boundary super-repelling fixed points of third type.
\end{remark}

Next result is due to P. Gumenyuk and shows the relationship between the boundary behavior of K\oe nigs functions and boundary fixed  points.

\begin{theorem}  \cite[Prop. 3.4, Prop. 3.7]{P} \label{pavel}
Let $(\phi_t)$ be a non-elliptic  semigroup of holomorphic self-maps of $\D$ with holomorphic model $(\Omega, h, z\mapsto z+it)$. Then
\begin{enumerate}
\item for every $\sigma\in \de \D$ the non-tangential limit $\angle\lim_{z\to \sigma} h(z)\in \overline{\C}$ exists.
\item Let $\sigma\in \de \D$. The (unrestricted) limit $\lim_{z\to \sigma}\Re h(z)$ exists finitely if and only if $\sigma$ is not a regular fixed point for $(\phi_t)$.
\item $\sigma\in \de \D$ is a boundary fixed point for $(\phi_t)$ different from the Denjoy\,--\,Wolff point if and only if $\lim_{z\to \sigma} \Im h(z)=-\infty$.
\item If $\sigma\in \de \D$ is not the Denjoy\,--\,Wolff of $(\phi_t)$ then $\limsup_{z\to \sigma} \Im h(z)<+\infty$.
\end{enumerate}
\end{theorem}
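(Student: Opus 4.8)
The plan is to read all four statements off the conformal map $h\colon\D\to Q:=h(\D)$ together with the translation structure of the model. Since $h\circ\phi_t=\Phi_t\circ h$ with $\Phi_t(w)=w+it$, one has $\Re h(\phi_t(z))=\Re h(z)$ and $\Im h(\phi_t(z))=\Im h(z)+t$; hence $Q+it\subseteq Q$ for $t\ge0$ and the Denjoy--Wolff point corresponds to the top prime end at $+i\infty$, toward which every forward orbit is pushed. Because $Q\subsetneq\C$ is simply connected, $h$ extends to a homeomorphism $\hat h\colon\oD\to\hat Q$ onto the Carath\'eodory prime-end compactification of $Q$; the unrestricted limit of $h$ at $\sigma$ exists in $\overline{\C}$ exactly when the impression of the prime end $\hat h(\sigma)$ is a single point, and the angular limit is governed by its principal point. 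The four assertions thus become statements about the impression and the principal point of each prime end.

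For (1) I would bypass prime ends: a univalent map is a normal function, so by the Lehto--Virtanen theorem it suffices to exhibit, for each $\sigma\in\de\D$, one curve landing at $\sigma$ along which $h$ converges in $\overline{\C}$; the angular limit then exists and equals this asymptotic value. Such a curve is available in every case. If $T(\sigma)<\infty$, fix $t_0>T(\sigma)$ so that $p:=\phi_{t_0}(\sigma)\in\D$; evaluating $h(z)+it_0=h(\phi_{t_0}(z))$ along the radius $r\mapsto r\sigma$ and using $\phi_{t_0}(r\sigma)\to p$ (Theorem \ref{pavelone}) gives the finite asymptotic value $h(p)-it_0$. If $\sigma$ lies in a contact arc, $h$ extends holomorphically through it by Theorem \ref{chac_contact_arc}. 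If $\sigma$ is the Denjoy--Wolff point, a forward orbit $t\mapsto\phi_t(z_0)$ lands at $\sigma$ with $h(\phi_t(z_0))=h(z_0)+it\to\infty$. Finally, if $\sigma$ is any other boundary fixed point, Proposition \ref{compon-bound} (and Proposition \ref{arc-koenig} when $\sigma$ starts an arc) supplies a vertical curve $t\mapsto h^{-1}(c+it)$, $t\to-\infty$, landing at $\sigma$ with $h\to\infty$.

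For (3)--(4) I would exploit $\Im h\circ\phi_t=\Im h+t$: an escape $\Im h(z_n)\to+\infty$ along $z_n\to\sigma$ pushes $h(z_n)$ toward the top of $Q$, so $\hat h(\sigma)$ must be the Denjoy--Wolff prime end, which forces $\sigma$ to be the Denjoy--Wolff point --- this is (4). Conversely, at a boundary fixed point $\sigma$ other than the Denjoy--Wolff point, Proposition \ref{compon-bound} identifies $\sigma$ with a connected component $C$ of $A(\phi_t)=\bigcap_{t\ge0}(Q+it)$ which is a vertical strip or line; the bottom end of $C$ lies over $\sigma$ and carries $\Im h\to-\infty$, which is (3). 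For (2) the same $C$ satisfies $\overline C=\{a\le\Re\zeta\le b\}$, and by Proposition \ref{compon-bound} the degeneracy $a=b$ singles out the super-repelling (non-regular) fixed points; the Julia--Wolff--Carath\'eodory theorem is what matches a positive width $b-a>0$ with a finite boundary dilatation coefficient, i.e. with regularity, so $\Re h$ has a finite unrestricted limit precisely off the regular fixed points.

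The real obstacle is the passage from radial/angular information to genuine unrestricted limits in (2)--(3) and the $\limsup$ control in (4): asymptotic values and Proposition \ref{compon-bound} only pin down $h$ along distinguished curves, whereas these parts assert the behaviour of $h$ as $z\to\sigma$ without restriction. Controlling $h$ on full boundary neighbourhoods is exactly where the Carath\'eodory machinery must do the work --- one has to show that the impression of $\hat h(\sigma)$ collapses to a single point whenever $\sigma$ is not a regular fixed point and equals a nondegenerate vertical segment exactly when it is --- and this in turn rests on the ``no Koebe arcs'' property of the univalent $h$ (already invoked in the proof of Proposition \ref{compon-bound}), which forces the diameters of the relevant crosscut preimages to shrink to zero.
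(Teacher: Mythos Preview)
The paper does not prove this theorem: it is quoted from Gumenyuk \cite[Prop.~3.4, Prop.~3.7]{P} as background input, so there is no ``paper's own proof'' to set your sketch against.

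On the sketch itself, the overall strategy is reasonable and you correctly isolate the crux --- upgrading angular/asymptotic information to genuine unrestricted limits via prime-end impressions and the no-Koebe-arcs property. Beyond the gap you already flag, a few further holes remain. Your treatment of~(2) only addresses boundary \emph{fixed} points (through the width of the component $C\subset A(\phi_t)$), whereas the statement concerns every $\sigma\in\partial\D$; you have not argued that $\Re h$ has a finite unrestricted limit at boundary points that are not fixed at all. For~(3) you give only one implication, and even that only along a distinguished curve; the converse --- that $\lim_{z\to\sigma}\Im h(z)=-\infty$ forces $\sigma$ to be a boundary fixed point other than the Denjoy--Wolff point --- is untouched. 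In your case analysis for~(1), when $\sigma$ is a super-repelling fixed point that starts a contact arc $A$ but does \emph{not} correspond to a line component of $A(\phi_t)$, Proposition~\ref{arc-koenig} only controls $h$ along $A\subset\partial\D$, not along a curve in $\D$, so an extra step is needed before Lehto--Virtanen applies. Finally, there is a circularity risk: you lean on Propositions~\ref{compon-bound} and~\ref{arc-koenig}, which are imported from \cite{CD} and \cite{BrGu}; the latter postdates \cite{P} and may itself rely on the results you are trying to reprove, so if the goal is an independent proof you would need to check those dependencies.
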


A similar result (see \cite{P}) holds for the starlike elliptic case replacing the real and imaginary part of $h$ with the modulus and the argument of $h$.

In the sequel we will need also the following characterization of points belonging to exceptional maximal contact arcs in terms of Carath\'eodory's prime end theory (see, {\sl e.g.} \cite[Chapter 9]{CL}, \cite[Sections 2.4-2.5]{Pommerenke2}). As a matter of notation, if $h:\D \to \C$ is univalent, we denote by  $\hat{h}$ the homeomorphism induced by $h$ from $\de \D$ and the space of Carath\'eodory's prime ends of $h(\D)$. 

\begin{proposition}\label{prime-exc}
Let $(\phi_t)$ be a non-elliptic semigroup of holomorphic self-maps of $\D$. Let $(I\times \R, h, z\mapsto z+it)$ be the holomorphic model of $(\phi_t)$, where $I=\R, (0,+\infty), (-\infty,0)$ or $(0,\rho)$ for some $\rho>0$. Let $M$ be the union of the exceptional maximal contact arcs for  $(\phi_t)$ (possibly $M=\emptyset$). Let $p\in\de\D$ be different from the Denjoy\,--\,Wolff point of $(\phi_t)$. Then $p\not\in \overline{M}$ if and only if the following condition holds:
\begin{itemize}
\item[(L)] there exists a null chain $(C_n)$ in $h(\D)$ representing  $\hat{h}(p)$ such that there exist a compact subinterval $I'\subset I$ and $N\in \N$ such that the connected component $V_n$ of $h(\D)\setminus C_n$ which does not contain $C_0$ satisfies $V_n\subset I'\times \R$ for all $n\geq N$.
\end{itemize}
\end{proposition}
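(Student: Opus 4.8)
The plan is to translate the geometric condition $p\notin\overline M$ into a statement about how the image $h(\D)=I\times\R$ minus a null chain looks near the corresponding prime end, using the structural results of Section \ref{max-tra}. First I would recall the key facts about $h(\D)$: since $(\phi_t)$ is non-elliptic with model $(I\times\R,h,z+it)$, the domain $Q:=h(\D)$ is contained in the strip $I\times\R$ and is "vertically convex upward" in the sense that $Q+it\subseteq Q$ for $t\ge0$. A maximal contact arc $A$ of $(\phi_t)$ corresponds, via Theorem \ref{chac_contact_arc} and Proposition \ref{arc-koenig}, to a vertical boundary segment $\{c+it:\ t>d\}\subseteq\de Q$ for suitable $c\in\partial I$ (an endpoint of $I$, since $\Re h\equiv c$ on $A$ forces $c\in\overline I$ and maximality together with $G\ne0$ forces $c$ to be a boundary value of $I$) and $d\in[-\infty,+\infty)$; the arc is exceptional exactly when $I$ has $c$ as a finite endpoint and $I$ is a bounded-on-that-side interval of the form $(c,a)$ or $(a,c)$ as in Proposition \ref{arc-koenig}(2). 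So $M\ne\emptyset$ can only happen when $I$ is a half-line or a finite interval, and the exceptional arcs sit over the endpoint(s) of $I$.

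Next I would prove the two implications. For the direction "$p\notin\overline M$ $\Rightarrow$ (L)": by Theorem \ref{pavel}(4), since $p$ is not the Denjoy--Wolff point, $\limsup_{z\to p}\Im h(z)<+\infty$, say $\le R_0$; thus any sufficiently small crosscut neighbourhood of $\hat h(p)$ lies in $\{\Im\zeta<R_0+1\}$. The issue is the behaviour in the $\Re$-direction. If $p\notin\overline M$, then near the prime end $\hat h(p)$ the domain does not "run out to an endpoint of $I$": more precisely, I claim there is $\delta>0$ so that a fundamental system of null chains representing $\hat h(p)$ determines connected components $V_n$ whose real parts avoid a fixed neighbourhood of $\partial I$. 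This is where the contact-arc theory enters — if the $V_n$ did approach an endpoint $c$ of $I$, then the part of $\de Q$ they enclose would have to include the vertical segment over $c$, i.e. the points of $h^{-1}$ of that segment would be accessible from the $V_n$, and by Theorem \ref{arcomass}(3) and Proposition \ref{arc-koenig} this vertical segment is either an exceptional maximal contact arc or (if $c$ is not a finite endpoint making it exceptional) is ruled out because $p$ is not the Denjoy--Wolff point; combined with the continuity of $\hat h$ this would force $p\in\overline M$. Choosing $I':=\{t\in I:\ \dist(t,\partial I)\ge\delta\}$ and combining with the $\Im$-bound $R_0+1$ gives a compact $I'\subset I$ with $V_n\subset I'\times\R$ for $n$ large, which is exactly (L). (When $I=\R$ there is no $\partial I$, $M=\emptyset$, and only the $\Im$-bound is needed.)

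For the converse "(L) $\Rightarrow$ $p\notin\overline M$": suppose $p\in\overline M$, so $p$ lies in the closure of some exceptional maximal contact arc $A$, say with associated finite endpoint $c\in\partial I$ and $h(A)=\{c+it:\ t>d\}$. For $x\in A$ we have $h(\phi_t(x))=h(x)+it$ with $\Re h(x)=c$, and the trajectory $t\mapsto\phi_t(x)$ sweeps out (part of) $A$; in particular points of $\D$ near $A$ are mapped by $h$ into a region whose real parts accumulate at the endpoint $c$ of $I$. Then for any null chain $(C_n)$ representing $\hat h(p)$, the components $V_n$ must contain points with $\Re\zeta$ arbitrarily close to $c\in\partial I$ (because the prime end $\hat h(p)$ is "impressioned" on the vertical line $\Re\zeta=c$ which bounds $I\times\R$, using that $h$ has no Koebe arcs so crosscuts shrink — here I invoke the no-Koebe-arc argument already used in the proof of Proposition \ref{compon-bound}), hence no compact $I'\subset I$ can contain all $\Re(V_n)$ for large $n$, contradicting (L). The boundary case $p=$ an endpoint of $\overline A$ other than the Denjoy--Wolff point needs the same crosscut-shrinking argument applied at the initial point of $A$.

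The main obstacle I anticipate is the first implication, specifically making rigorous the claim that "$V_n$ not approaching $\partial I$" follows from "$p\notin\overline M$." This requires carefully ruling out the possibility that the components $V_n$ escape toward an endpoint of $I$ along a portion of the boundary that is \emph{not} a contact arc at all — i.e. showing that if $\Re h$ gets arbitrarily close to a finite endpoint $c$ of $I$ along a neighbourhood of $\hat h(p)$, the only way this is compatible with $Q+it\subseteq Q$ and $p$ not being the Denjoy--Wolff point is that the vertical line over $c$ is (eventually) on $\de Q$ and gives an exceptional arc. I expect to handle this by a direct analysis of the geometry of $Q$ near the line $\Re\zeta=c$: use vertical convexity to show the intersection of $Q$ with a thin strip $\{c<\Re\zeta<c+\varepsilon\}$ (or $\{c-\varepsilon<\Re\zeta<c\}$) is, in the relevant range of $\Im\zeta$, itself of the form $\{(x,y):\ c<x<c+\varepsilon,\ y>\psi(x)\}$ with $\psi$ controlled, and then identify the vertical limiting segment with a contact arc via Theorem \ref{chac_contact_arc}, citing Proposition \ref{arc-koenig}(2) for exceptionality. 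Everything else is bookkeeping with $\hat h$ and Carath\'eodory's theorem together with the already-quoted Theorems \ref{arcomass}, \ref{pavel} and Proposition \ref{arc-koenig}.
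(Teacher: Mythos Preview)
Your overall strategy --- translate $p\notin\overline M$ into ``the prime end $\hat h(p)$ stays away from $\partial I$'' --- is the right direction, but the plan has a genuine gap in the forward implication.

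Condition (L) requires $V_n\subset I'\times\R$ with $I'$ a \emph{compact} subinterval of $I$. Your proposed $I':=\{t\in I:\dist(t,\partial I)\ge\delta\}$ is not compact when $I$ is unbounded (e.g.\ $I=(0,+\infty)$ gives $I'=[\delta,+\infty)$), and when $I=\R$ there is no $\partial I$ at all, so ``avoid $\partial I$'' is vacuous --- yet you still need $\Re(V_n)$ bounded. The $\Im$-bound from Theorem~\ref{pavel}(4) does nothing here, since (L) constrains only real parts. So the argument as written never produces a compact $I'$.

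The paper closes this gap by splitting on whether $p$ is a boundary \emph{regular} fixed point. If $p$ is not regular, Theorem~\ref{pavel}(2) says the unrestricted limit $\lim_{z\to p}\Re h(z)=a$ exists and is finite --- this is the key fact you never invoke, and it immediately traps the impression of $\hat h(p)$ on the line $\{\Re\zeta=a\}$, reducing everything to whether $a\in I$ or $a\in\partial I$; in the latter case one uses the radial image $h([0,1)p)$, the invariance $Q+it\subset Q$, and the no-Koebe-arcs theorem to build an exceptional arc and show $p$ lies in its closure. If $p$ \emph{is} regular, Proposition~\ref{compon-bound} supplies a bounded strip $\{a<\Re\zeta<b\}\subset h(\D)$ with $a,b$ finite, and horizontal crosscuts at $\Im\zeta=-n$ give the null chain; one then checks $a,b$ lie in the interior of $I$ since otherwise $p$ would start an exceptional arc. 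Your proposed ``vertical convexity near $\Re\zeta=c$'' argument might recover the non-regular case once you know which $c$ to analyse, but without the dichotomy of Theorem~\ref{pavel}(2) you have no anchor for $c$, and at regular fixed points (where $\Re h$ has no limit) that picture does not apply at all.

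A minor slip: not every maximal contact arc has $c\in\partial I$; Theorem~\ref{chac_contact_arc} only gives $c\in\overline I$, and it is precisely the \emph{exceptional} ones that sit over endpoints of $I$ (Proposition~\ref{arc-koenig}(2)).
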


\begin{proof}
Assume $\hat M$ is an exceptional maximal contact arc for $(\phi_t)$. By Proposition \ref{arc-koenig}, $I\neq \R$ and we can assume without loss of generality that $I=(0,\rho)$, with $\rho\in (0,+\infty]$ and that $h(\hat M)=\{it, t>d\}$ for some $d\in[-\infty,+\infty)$. 

Let $p\in \de \D$ be a point different from the Denjoy\,--\,Wolff point of $(\phi_t)$.

If $p\in \hat M$, since by Theorem \ref{chac_contact_arc}, $h$ extends holomorphically through $p$ and $h(p)\in  \{it, t>d\}$,  every null chain $(C_n)$ representing $\hat{h}(p)$ converges to $\{h(p)\}$, hence it is not possible to find a closed subinterval $I'\subset I$ and $N\in \N$ such that  $V_n\subset I'\times \R$ for all $n\geq N$. 

If $p$ is the initial point of $\hat M$, assume by contradiction that $(C_n)$ is a null chain representing $\hat{h}(p)$ such that there exists a closed subinterval $I'=[a,b]\subset I$, for $0<a<b$, such that $V_n\subset I'\times \R$ for $n$ sufficiently big. 
By Theorem \ref{pavel}, $h$ has non-tangential limit (finite or infinite) at $p$. Also $\lim_{M\ni z\to p}h(z)$ exists (finite or infinite). Indeed, $\Re h(z)=0$  and $\Im h(\phi_t(z))=\Im h(z)+t$ for all $z\in \hat M$ and all $t\geq 0$, proving that $\Im h(z)$ decreases when $z$ moves on $\hat M$ toward the initial point $p$ of $\hat M$. By \cite[Lemma 2.10.(1)]{BrGu}, $\lim_{M\ni z\to p}h(z)=\angle\lim_{z\to p}h(z)$. In particular, $\angle\lim_{r\to 1^-}\Re h(rp)=0$.  Since for every $n\in \N$ the curve $(0,1)\ni r\mapsto h(rp)$ is eventually contained in $V_n$, and since $V_n\subset \{\Re z>a\}$,  it follows that $\lim_{r\to 1^-}\Re h(rp)\geq a>0$, a contradiction. Therefore (L) implies $p\not\in\overline{\hat M}$.

Now assume  $p\not\in\overline{M}$.  

Suppose first $p$ is a boundary regular fixed point of $(\phi_t)$. By Proposition \ref{compon-bound}, there exist $-\infty<a<b<+\infty$ such that $(a,b)\subset I$, $\{z\in \C: a<\Re z<b\}\subset h(\D)$ but $\{z:a-\epsilon<\Re z<b+\epsilon\}\not\subset h(\D)$ for any $\epsilon>0$, and for every $c\in (a,b)$, $\lim_{t\to -\infty}h(c+it)=p$. Let $C_n$ be the connected component of $\{\Im z=-n\}$ which intersects  $\{z: a<\Re z<b\}$. Then  $(C_n)$ is a null chain which represents $\hat{h}(p)$. If $I=\R$ then $(C_n)$ satisfies (L) and we are done. Assume $I=(\alpha, \beta)$, with $-\infty<\alpha\leq a<b\leq \beta\leq +\infty$. If $\alpha=a$, then clearly $p$ is the initial point of the exceptional maximal contact arc defined by $\{z\in \de \D: z=h^{-1}(a+it), t\in \R\}$, therefore $\alpha<a$. Similarly $b<\beta$. Hence $(C_n)$ satisfies condition (L).

Now assume $p$ is not a boundary regular fixed point and suppose by contradiction that condition (L) is not satisfied. Let $(C_n)$ be a null chain and the $V_n$'s as defined above. The impression of $\hat{h}(p)$ is given by $I(\hat{h}(p))=\cap_{n\in \N} \overline{V_n}$ (where the closure has to be understood in the Riemann sphere $\C\mathbb P^1$). It is known that $x\in I(\hat{h}(p))$ if and only if there exists a sequence $\{z_n\}\subset \D$ such that $z_n\to p$ and $h(z_n)\to x$. 

By Theorem \ref{pavel}.(2), there exists $a\in \R$ such that $\lim_{z\to p}\Re h(z)=a$. Therefore, if (L) does not hold, it follows that $a\in \de I$. In particular, if $I=\R$, we obtain a contradiction and hence (L) holds in this case. Thus we can assume without loss of generality that $I=(a,b)$, with $-\infty< a<b\leq +\infty$. 

Since $\limsup_{z\to p}\Im h(z)<+\infty$ by Theorem \ref{pavel}.(4), the impression of $\hat{h}(p)$ is given by
\[
I(\hat{h}(p))=\{z: \Re z=a, \Im z\in [k_1,k_2]\},
\]
for some $-\infty\leq k_1< k_2<+\infty$ or  $I(\hat{h}(p))$ is the point at infinity  in $\C\mathbb P^1$. Let $\Gamma=h([0,1)p)$. Such a curve has a limit $q\in \C\mathbb P^1$ by Theorem \ref{pavel}.(1). Moreover, since $\lim_{z\to p} \Re h(z)=a$, it follows that $\overline{\Gamma}$ intersects the line $\{\Re z=a\}$ at $q$. In case $q$ is the point at infinity, let us set $\Im q=-\infty$. With such a notation, since $\Gamma+it\subset h(\D)$ for all $t\geq 0$,  it follows that $\{z: \Re z=a, \Im z>\Im q\}\subset \de h(\D)$. Therefore $(\phi_t)$ has an exceptional maximal contact arc $\hat M$ given by $h^{-1}(\{(a+it) : t>\Im q\})$.
We claim that $p\in\overline{\hat M}$, reaching a contradiction. This follows at once from
\begin{equation}\label{limitexc}
\lim_{t\to (\Im q)^{+}}h^{-1}(a+it)=p.
\end{equation}
In order to prove \eqref{limitexc} , let $D(q,\frac{1}{n})$ be a disc in the spherical metric centered at $q$ and with radius $1/n$, $n\in \N$. Note that for all $n\in \N$, $\Gamma\cap D(q,\frac{1}{n})\neq \emptyset$. Therefore, for each $n\in \N$ we can find a Jordan curve $T_n:[0,1]\to \C$  such that $T_n([0,1])\subset D(q,\frac{1}{n})$, $T_n((0,1))\subset h(\D)$, $T_n(0)=a+it_n$ for some $t_n>\Im q$, $T_n(1)\in \Gamma$. Let $Q_n:=T_n(0,1)$.  Then, the spherical diameters of $Q_n$ tend to $0$. Moreover, $h^{-1}(Q_n)$ is a sequence of Jordan arcs in $\D$, whose Euclidean diameter tends to $0$ by the 
no Koebe arcs theorem (see, {\sl e.g.}, \cite[Corollary 9.1]{Pommerenke}). Hence \eqref{limitexc} holds. 
\end{proof}

\section{Extension of topological conjugation  for non-elliptic semigroups}\label{estensionenon}
\begin{lemma}\label{taucon}
Let $(\phi_t)$ and $(\v_t)$ be two non-elliptic semigroups in $\D$.  Let $(\Omega_1, h_1, z\mapsto z+ti)$ be the holomorphic model of $(\phi_t)$ and let $(\Omega_2, h_2, z\mapsto z+ti)$ be the holomorphic model of $(\v_t)$. Write $\Omega_j=I_j\times \R$, $j=1,2$, where $I_j$ is any of the intervals $(-\infty,0)$, $(0,+\infty)$, $(0,\rho)$, with $\rho>0$, or $\R$.  Then $(\phi_t)$ and $(\v_t)$ are topologically conjugated if and only if there exist an  homeomorphism $u: I_1\to I_2$ and a continuous function $v:I_1\to \R$ such that 
\begin{equation}\label{formatau}
\tau(z):=u(\Re z)+i(\Im z+v(\Re z))
\end{equation}
 satisfies $\tau(h_1(\D))=h_2(\D)$.
\end{lemma}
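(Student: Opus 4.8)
The plan is to reduce the question of topological conjugacy of the semigroups, via Proposition \ref{conj-model}, to the existence of a topological conjugation of their holomorphic models, and then to show that any such model conjugation can be taken of the normalized form \eqref{formatau}. One direction is immediate: if $\tau$ has the form \eqref{formatau}, then since $u$ is a homeomorphism of $I_1$ onto $I_2$ and $v$ is continuous, $\tau$ is a homeomorphism of $\Omega_1=I_1\times\R$ onto $\Omega_2=I_2\times\R$; moreover $\tau(z+it)=u(\Re z)+i(\Im z + t + v(\Re z))=\tau(z)+it$, so $\tau$ intertwines the two translation flows $z\mapsto z+it$. Together with the hypothesis $\tau(h_1(\D))=h_2(\D)$, this says $\tau$ is a topological conjugation of models, and Proposition \ref{conj-model} gives that $(\phi_t)$ and $(\v_t)$ are topologically conjugated.

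For the converse, suppose $(\phi_t)$ and $(\v_t)$ are topologically conjugated. By Proposition \ref{conj-model} there is a homeomorphism $\sigma:\Omega_1\to\Omega_2$ with $\sigma(z+it)=\sigma(z)+it$ for all $t\geq 0$ (hence, since both sides are homeomorphisms in $z$ and the relation propagates, for all $t\in\R$) and $\sigma(h_1(\D))=h_2(\D)$. The first task is to extract from $\sigma$ the real and imaginary coordinate behavior: write $\sigma(x+iy)=P(x,y)+iQ(x,y)$. The intertwining relation $\sigma(z+it)=\sigma(z)+it$ forces $P(x,y+t)=P(x,y)$ for all $t$, so $P(x,y)=u(x)$ depends only on $x$, and $Q(x,y+t)=Q(x,y)+t$, so $Q(x,y)=y+v(x)$ where $v(x):=Q(x,0)$. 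Thus $\sigma$ already has the form \eqref{formatau} with $u=P(\cdot,0)$ and $v=Q(\cdot,0)$, both continuous. It remains only to check that $u:I_1\to I_2$ is a homeomorphism: $u$ is continuous, and since $\sigma$ is a homeomorphism carrying the foliation of $\Omega_1$ by vertical lines $\{x\}\times\R$ onto the analogous foliation of $\Omega_2$ (two points lie on the same vertical line iff they differ by a purely imaginary number, a relation preserved by $\sigma$), $u$ is a continuous bijection of the interval $I_1$ onto $I_2$, hence a homeomorphism.

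The only genuinely delicate point is the passage, for $t<0$, from the semigroup intertwining $\sigma\circ\Phi_t=\tilde\Phi_t\circ\sigma$ ($t\geq 0$) to the full group intertwining used above; this is handled by noting that $\Phi_t$ and $\tilde\Phi_t$ are automorphisms of $\Omega_1,\Omega_2$ and $\sigma$ is a homeomorphism, so $\sigma\circ\Phi_{-t}=\sigma\circ\Phi_t^{-1}=(\tilde\Phi_t\circ\sigma)^{-1}\circ\tilde\Phi_t\circ\sigma\circ\Phi_t^{-1}$ and a short computation gives $\sigma\circ\Phi_{-t}=\tilde\Phi_{-t}\circ\sigma$. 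Everything else is bookkeeping: the form \eqref{formatau} is essentially read off from the two coordinate relations, and the regularity of $u$ and $v$ is inherited directly from that of $\sigma$. No nontrivial analytic input is needed beyond Proposition \ref{conj-model}.
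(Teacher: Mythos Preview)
Your proof is correct and follows essentially the same route as the paper: reduce via Proposition~\ref{conj-model} to a homeomorphism $\tau:\Omega_1\to\Omega_2$ commuting with vertical translations, then read off the form $\tau(x+iy)=u(x)+i(y+v(x))$ from the intertwining relation. The paper is terser (it simply writes $\tau(x+iy)=\tau(x)+iy$ and applies the same reasoning to $\tau^{-1}$ to conclude $u$ is a homeomorphism), whereas you spell out the coordinate computation and argue via the vertical-line foliation, but there is no substantive difference.
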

\begin{proof}
Let $Q_1:=h_1(\D)$ and let $Q_2:=h_2(\D)$. By Proposition \ref{conj-model}, $(\phi_t)$ and $(\v_t)$ are topologically conjugated if and only if there exists a homeomorphism $\tau: \Omega_1\to \Omega_2$  such that $\tau(z+it)=\tau(z)+it$ for all $t\in \R$ and $\tau(Q_1)=Q_2$. 

Write $z=x+iy$ with  $x\in I_1$ and $ y\in \R$. Then $\tau(x+iy)=\tau(x)+iy$. Write $\tau(x)=u(x)+i v(x)$. Then $u: I_1\to I_2$ and $v: I_1\to \R$ are continuous. Since also $\tau^{-1}: \Omega_2 \to \Omega_1$ satisfies $\tau^{-1}(z+it)=\tau^{-1}(z)+it$ for all $t\in \R$, it follows that $u$ is a homeomorphism.
\end{proof}

Let $(\v_t)$ be a non-elliptic semigroup, let us denote by $M(\v_t)\subset \de\D$  the set of points which belong to an exceptional maximal contact arc for $(\v_t)$  (note that $M(\v_t)$ is open and possibly empty). Let $x\in \de\D$ be the Denjoy\,--\,Wolff point of $(\v_t)$ and let
\[
E(\v_t):=\overline{M(\v_t)\cup \{x\}}.
\]

\begin{proposition}\label{estensionebuona}
Let $(\phi_t)$ and $(\v_t)$ be two  non-elliptic semigroups of holomorphic self-maps of $\D$. Suppose $(\phi_t)$ and $(\v_t)$ are topologically conjugated via the homeomorphism $f:\D\to \D$. Then $f$ extends to a homeomorphism
\[
f:\oD\setminus E(\phi_t)\to \oD\setminus E(\v_t).
\]
Moreover, for all $p\in \oD\setminus E(\phi_t)$ it holds $T(p)=T(f(p))$ and $f(\phi_t(p))=\v_t(f(p))$ for all $t\geq 0$.
\end{proposition}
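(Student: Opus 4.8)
The plan is to reduce everything to the holomorphic models and to Carathéodory's prime end theory. By Proposition~\ref{conj-model}, the topological conjugacy $f:\D\to\D$ corresponds to a topological conjugacy of the holomorphic models $(\Omega_1,h_1,z\mapsto z+it)$ and $(\Omega_2,h_2,z\mapsto z+it)$ of $(\phi_t)$ and $(\v_t)$; concretely, $\tau:=h_2\circ f\circ h_1^{-1}$ (extended as in the proof of Proposition~\ref{conj-model}) is a homeomorphism $\Omega_1\to\Omega_2$ commuting with the translations $z\mapsto z+it$ and carrying $Q_1:=h_1(\D)$ onto $Q_2:=h_2(\D)$. By Lemma~\ref{taucon} we may write $\tau(z)=u(\Re z)+i(\Im z+v(\Re z))$ with $u:I_1\to I_2$ a homeomorphism and $v:I_1\to\R$ continuous.

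First I would pass to the prime-end boundaries. Since $h_1$ is univalent, $\hat h_1$ is a homeomorphism from $\de\D$ onto the space of prime ends of $Q_1$, and similarly for $h_2$; so $f=h_2^{-1}\circ\tau\circ h_1$ on $\D$ will extend continuously (as a homeomorphism) exactly over those boundary points whose prime ends are carried by $\tau$ to prime ends of $Q_2$. The key point is that $\tau$, being of the product form in Lemma~\ref{taucon}, is a proper homeomorphism of $\Omega_1$ onto $\Omega_2$ that is ``tame'' in the vertical direction, so it induces a homeomorphism of prime-end spaces \emph{away from} the prime ends that sit over an endpoint of $I_1$ with unbounded imaginary-part behaviour --- and by Proposition~\ref{prime-exc} those are precisely the prime ends corresponding to $\overline{M(\phi_t)}$, together with the Denjoy--Wolff point (whose prime end is the one ``at infinity'' in the positive imaginary direction). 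Thus one shows: for $p\in\de\D\setminus E(\phi_t)$, the prime end $\hat h_1(p)$ is represented by a null chain $(C_n)$ as in condition~(L), with $V_n\subset I'\times\R$, $I'\subset I_1$ compact; applying $\tau$ gives a null chain $(\tau(C_n))$ in $Q_2$ whose ``inside'' components lie in $u(I')\times\R$, a compact vertical strip in $I_2$, so by Proposition~\ref{prime-exc} again it represents a prime end of $Q_2$ lying over a point $q\in\de\D\setminus E(\v_t)$; set $f(p):=q$. Symmetry of the argument (using $\tau^{-1}$, which has the same product form) gives that $p\mapsto f(p)$ is a bijection $\de\D\setminus E(\phi_t)\to\de\D\setminus E(\v_t)$, and continuity/openness on $\oD\setminus E(\phi_t)$ follows from continuity of $\tau,\tau^{-1}$ and the fact that convergence of prime ends plus the (L)-type uniform control forces convergence of the representing points (no Koebe arcs: the Euclidean diameters of the $h_j^{-1}$-images of the crosscuts tend to zero).

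Then I would verify the two additional identities. The relation $f\circ\phi_t=\v_t\circ f$ on $\D$ is given; since both sides extend continuously to $\de\D\setminus E(\phi_t)$ (note $\phi_t$ maps $\de\D\setminus E(\phi_t)$ into $\oD\setminus E(\v_t)$-preimage appropriately, using that contact arcs and trajectories behave well under $h_1$, via Theorems~\ref{chac_contact_arc} and~\ref{arcomass}), the identity passes to the boundary by continuity. For the life-time equality $T(p)=T(f(p))$: using the model, for $p$ in a (non-exceptional) maximal contact arc we have $h_1(p)$ on a vertical line $\{\Re z=c\}$ and, by~\eqref{TR}, $T(p)=R_1-\Im h_1(p)$ where $R_1=\sup\{s:c+is\in h_1(A)\}$; since $\tau$ preserves imaginary parts up to the bounded continuous shift $v(c)$ and carries $h_1(A)$ onto the corresponding arc-image for $(\v_t)$, one gets $R_2=R_1+v(c)$ and $\Im h_2(f(p))=\Im h_1(p)+v(c)$, so the shift cancels and $T(p)=T(f(p))$; for boundary fixed points $p\notin E(\phi_t)$ both life-times are $+\infty$ and there is nothing to prove. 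The final clause (if $x\in\de\D$ then $y\in\de\D$ and $\angle\lim_{z\to x}f(z)=y$) is deferred to Proposition~\ref{inizio-rego}, so here I would only record that the Denjoy--Wolff point corresponds under $\hat h_1$ to the prime end ``at $+i\infty$'' and that $\tau$ preserves this, forcing $y\in\de\D$.

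The main obstacle, I expect, is the prime-end bookkeeping in the first step: proving rigorously that $\tau$ of the product form in Lemma~\ref{taucon} induces a homeomorphism of the relevant portions of the prime-end boundaries, and that condition~(L) of Proposition~\ref{prime-exc} is stable under $\tau$ and under $\tau^{-1}$. The subtlety is that $\tau$ is only a homeomorphism, not quasiconformal, and it may badly distort crosscuts near the troublesome endpoint of $I_1$; the argument has to isolate the ``good'' prime ends (those over $\de\D\setminus E(\phi_t)$) as exactly the ones admitting a null chain with inside-components trapped in a compact vertical substrip, and then exploit that $u$ maps compact subintervals of $I_1$ to compact subintervals of $I_2$ to transport this trapping property. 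Once that is in place, the no-Koebe-arcs theorem does the rest of the work of turning prime-end convergence into honest continuity of the boundary extension.
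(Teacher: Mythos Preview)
Your plan is essentially the paper's own proof: pass to the holomorphic models, write $\tau$ in the product form of Lemma~\ref{taucon}, use condition~(L) of Proposition~\ref{prime-exc} to characterize the prime ends over $\partial\D\setminus E(\phi_t)$, push null chains through $\tau$ (which preserves~(L) because $u$ takes compact subintervals to compact subintervals), and use the prime-end topology for continuity. Two small points where the paper is tighter than your outline: (i) to exclude that the image point $q$ is the Denjoy--Wolff point of $(\varphi_t)$, condition~(L) alone is not enough (Proposition~\ref{prime-exc} only rules out $\overline{M(\varphi_t)}$); the paper uses Theorem~\ref{pavel}(4) to bound $\sup\Im$ on the impression and hence $\limsup_{z\to q}\Im h_2(z)<+\infty$; (ii) for the boundary functional equation you cannot simply invoke continuity of both sides, since $\varphi_t$ is only known to have \emph{angular} limits at boundary points; the paper instead lets $r\to 1$ in $\varphi_t(f(rp))=f(\phi_t(rp))$ to get a radial-curve limit for $\varphi_t$ at $f(p)$ and then applies Lindel\"of's theorem. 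Once the functional equation holds on the boundary, $T(p)=T(f(p))$ follows immediately from it, which is simpler than your computation via~\eqref{TR}.
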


\begin{proof}
In order to prove the result, we use Carath\'eodory's prime ends theory (see, {\sl e.g.} \cite[Chapter 9]{CL}, \cite[Sections 2.4-2.5]{Pommerenke2}). Let $(\Omega_1=I_1\times\R, h_1, z\mapsto z+it)$ be the holomorphic model of $(\phi_t)$ and let  $(\Omega_2=I_2\times\R, h_2, z\mapsto z+it)$ be the holomorphic model of $(\v_t)$, where $I_1, I_2$ are intervals of the form  $(-\infty,0)$, $(0,+\infty)$, $(0,\rho)$, with $\rho>0$, or $\R$. Let $\hat{h}_1$ denote the homeomorphism from $\de \D$ to the Carath\'eodory prime-ends boundary of $h_1(\D)$ defined by $h_1$. 

Let $p\in \de \D\setminus E(\phi_t)$. Let $(C_n)$ be a null-chain in $h_1(\D)$ which represents $\hat{h}_1(p)$ and denote by $V_n$ the connected component of $h_1(\D)\setminus C_n$ which does not contain $C_0$. Let $I(\hat{h}_1(p))=\cap_{n>0} \overline{V_n}$ denote the impression of $\hat{h}_1(p)$ in the Riemann sphere (here $\overline{V_n}$ denotes the closure of $V_n$ in the Riemann sphere). Then, for any sequence $\{z_n\}\subset \D$ converging to $p$, the sequence $\{h(z_n)\}$ is eventually contained in $V_m$ for all $m\geq 0$. Conversely, if $\{w_n\}\subset h_1(\D)$ is any sequence which is eventually contained in $V_m$ for any $m\in \N$, then $\lim_{n\to \infty}h_1^{-1}(w_n)=p$.

Since $\limsup_{z\to p}\Im h_1(z)<K$ for some constant $K<+\infty$ by Theorem \ref{pavel}.(4), it follows that 
\begin{equation}\label{limitoh}
\sup\{\Im z: z\in I(\hat{h}_1(p))\}<+\infty.
\end{equation}

By Proposition \ref{prime-exc}, we can assume that $V_n\subset I_1'\times\R$ for some compact subinterval $I'\subset I$ and for all $n\in \N$. By Lemma \ref{taucon}, $f=h_2^{-1}\circ \tau \circ h_1$, where $\tau$ is given by \eqref{formatau}. In particular, setting $\tau(\infty):=\infty$, it follows that $\tau$ is uniformly continuous on $\overline{I_1'\times \R}$ (where the closure has to be understood in $\C\mathbb P^1$ and $\infty$ denotes the point at infinity). From this it follows easily that $\tau(C_n)$ is a null chain in $h_2(\D)$. Moreover, if $W_n$ is the connected component of $h_2(\D)\setminus\tau(C_n)$ which does not contain $\tau(C_0)$, then $W_n=\tau(V_n)$ and $W_n$ is contained in $u(I_1')\times \R$ for all $n\in \N$, with $u(I_1')$ being a compact subinterval of $I_2$. Let $q\in \de \D$ be such that $(\tau(C_n))$ represents $\hat{h}_2(q)$. Then $\tau(I(\hat{h}_1(p)))=I(\hat{h}_2(q))$, and by \eqref{limitoh}, 
\[
\sup\{\Im z: z\in I(\hat{h}_2(q))\}<+\infty,
\]
which, in turns, implies that $\limsup_{z\to q}\Im h_2(z)<+\infty$. Since  the non-tangential limit of $\Im h_2$ at the Denjoy\,--\,Wolff point of $(\v_t)$ is $+\infty$, this implies that $q$ is not the Denjoy\,--\,Wolff point of $(\v_t)$. Hence, by Proposition \ref{prime-exc}, $q$ does not belong to the closure of an exceptional maximal compact arc for $(\v_t)$. 

Now we show that, in fact, $\lim_{z\to p}f(z)=q$. Let $\{z_n\}$ be a sequence converging to $p$. Then $\{h_1(z_n)\}$ is eventually contained in $V_m$ for all $m\in \N$. Thus $\{\tau(h_1(z_n))\}$ is eventually contained in $\tau(V_m)=W_m$ for all $m\in \N$, which implies that $\{h_2^{-1}(\tau(h_1(z_n)))\} $ converges to $q$, that is, $\lim_{z\to p}f(z)=q$.

Next, if $p_1,p_2\in \de \D\setminus E(\phi_t)$ are two different points, the null chains $(C_n^1), (C_n^2)$ in $h_1(\D)$ which represents $\hat{h}_1(p_1)$ and $\hat{h}_1(p_2)$ are not equivalent (in the prime-ends sense). Therefore,  chosen those chains to satisfy condition (L) in Lemma \ref{prime-exc} for all $n\in \N$, it follows easily that $(\tau(C_n^1))$ and $(\tau(C_n^2))$ are not equivalent. Thus, if  $q_j\in \de \D$ is such that $(\tau(C_n^j))$ represents $\hat{h}_2(q_j)$, $j=1,2$, it follows that $q_1\neq q_2$. Hence, the extension of $f$ to $\de \D \setminus E(\phi_t)$ is injective.

Finally, we prove that $f:\oD\setminus E(\phi_t)\to \oD\setminus E(\v_t)$ is continuous. 

Recall the following fact from Carath\'eodory's prime ends theory. Let $h:\D \to h(\D)$ be univalent. Let $\{p_m\}$ be a sequence of points of $\de \D$. For each $m\in \N$, let $(C_n^m)$ be a null chain in $h(\D)$ representing the prime end $\hat{h}(p_n)$. Then $p_m\to p\in \de \D$ if and only if, given $(C_n)$ a null chain in $h(\D)$ representing $\hat{h}(p)$, for every open subset $U\subset h(\D)$ such that $C_n\subset U$ for all $n>>1$, it follows that there exists $m_0\in \N$ such that for every $m\geq m_0$ there exists  $N_m\in \N$ such that $C_n^m\subset U$ for all $m\geq m_0$ and $n\geq N_m$.

In order to prove continuity of $f$,  it is enough to show that if $\{p_m\}\subset \de\D\setminus E(\phi_t)$ converges to $p\in \de\D\setminus E(\phi_t)$ then $\lim_{m\to \infty}f(p_m)= f(p)$. 
Let $q_m:=f(p_m), q:=f(p)$. 

For each $m\in \N$, let $(C_n^m)$ be a null chain in $h_1(\D)$ representing $\hat{h}_1(p_m)$, chosen so that  condition (L) of Proposition \ref{prime-exc} is satisfied for all $n$, and similarly let $(C_n)$ be a null chain which satisfies condition (L) and represents $\hat{h}_1(p)$. 

As shown before, $(\tau(C_n^m))$ is a null chain in $h_2(\D)$ representing $\hat{h}_2(q_m)$ and $(\tau(C_n))$ is a null chain in $h_2(\D)$ representing $\hat{h}_2(q)$. 

Fix an open subset $U\subset h_2(\D)$ such that $\tau(C_n)\subset U$ for $n>>1$. Then, $\tau^{-1}(U)$ eventually contains $(C_n)$. Since $p_m\to p$, this implies that for every $m>>1$, $(C_n^m)$ is eventually contained in $\tau^{-1}(U)$. Hence $(\tau(C_n^m))$ is 
eventually contained in $U$ for $m>>1$. Therefore, $q_m\to q$ and $f$ is continuous.

Since the same applies to $f^{-1}$, it follows that $f:\oD\setminus E(\phi_t)\to \oD\setminus E(\v_t)$ is a homeomorphism.

In order to prove the last equations, fix $p\in \oD\setminus E(\phi_t)$ and fix $t\geq 0$. Let $r\in (0,1)$. Then $\v_t(f(rp))=f(\phi_t(rp))\to f(\phi_t(p))$ as $r\to 1$. Therefore, the limit of $\v_t$ along the continuous curve $r\mapsto f(rp)$ (which converges to $f(p)$) is $f(\phi_t(p))$. By the Lindel\"of theorem, $\v_t(f(p))=\angle\lim_{z\to f(p)}\v_t(z)=f(\phi_t(p))$, and thus the functional equation holds at $p$. From this,  it follows at once that $T(p)=T(f(p))$.
\end{proof}

\section{Topological invariants for non elliptic semigroups}\label{tinvariants}

First we examine maximal contact arcs which are not exceptional:

\begin{proposition}\label{corresp-arc}
Let $(\phi_t)$ and $(\v_t)$ be two non-elliptic semigroups in $\D$. Suppose $(\phi_t)$ and $(\v_t)$ are topologically conjugated via the homeomorphism $f:\D\to \D$. If $W$ is a maximal contact arc for $(\phi_t)$ which is not exceptional, then $f$ extends to a homeomorphism from $W$ onto $f(W)$ and $f(W)$ is a maximal contact arc for $(\v_t)$ which is not exceptional. 
\end{proposition}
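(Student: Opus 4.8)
The plan is to transport everything to the holomorphic models, use the explicit shape of the conjugation of models supplied by Lemma~\ref{taucon}, and then recognize $f(W)$ as a contact arc via the K\oe nigs-function criterion of Theorem~\ref{chac_contact_arc}. First I would note that $W\subset\partial\D\setminus E(\phi_t)$: distinct maximal contact arcs are disjoint, so the maximal non-exceptional arc $W$ misses the open set $M(\phi_t)$, hence also $\overline{M(\phi_t)}$, and $W$ does not contain the Denjoy--Wolff point since $T\equiv\infty$ there while $T<\infty$ on $W$ by Proposition~\ref{time-cont-max}. Thus Proposition~\ref{estensionebuona} at once gives that $f|_W$ extends to a homeomorphism of $W$ onto an open sub-arc $f(W)\subset\partial\D\setminus E(\v_t)$ with $T(f(p))=T(p)$ for all $p\in W$. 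Writing $(\Omega_1=I_1\times\R,h_1,z+it)$ and $(\Omega_2=I_2\times\R,h_2,z+it)$ for the holomorphic models of $(\phi_t)$ and $(\v_t)$, Proposition~\ref{conj-model} and Lemma~\ref{taucon} give $f=h_2^{-1}\circ\tau\circ h_1$ on $\D$ with $\tau(z)=u(\Re z)+i(\Im z+v(\Re z))$, $u\colon I_1\to I_2$ a homeomorphism and $v\colon I_1\to\R$ continuous. Since $W$ is a contact arc, Theorem~\ref{chac_contact_arc} yields a holomorphic extension of $h_1$ through $W$ with $\Re h_1\equiv c$ there for some $c\in\overline{I_1}$; moreover $c$ must lie in the \emph{interior} of $I_1$. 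Indeed, by Proposition~\ref{arc-koenig}(2) the only non-exceptional alternative is $c\in\partial I_1$ with $\Im h_1(W)$ bounded above, say with supremum $d<\infty$; but then the final point $x_1$ of $W$, which is not the Denjoy--Wolff point, satisfies $\phi_t(x_1)\in\D$ and, passing to the limit along $W$, $h_1(\phi_t(x_1))=c+i(d+t)$, which lies on $\partial\Omega_1$ — impossible, since $h_1(\D)\subset\Omega_1$.

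Next I would identify $f(W)$ as a contact arc for $(\v_t)$. Fix $q=f(p)\in f(W)$. Since $T(q)=T(p)<\infty$, $q$ is not a boundary fixed point, so by Theorem~\ref{pavel}(2) the unrestricted limit $\lim_{z\to q}\Re h_2(z)$ exists and is finite. Computing it along the admissible sequence $f(rp)\to q$ (which lies in $\D$ because $f(\D)\subset\D$, and along which $\Re h_2(f(rp))=u(\Re h_1(rp))\to u(c)$ by continuity of $h_1$ at $p\in W$, of $u$ at $c\in\mathrm{int}(I_1)$, and $\Re h_1(p)=c$), one gets $\lim_{z\to q}\Re h_2(z)=u(c)$ for \emph{every} $q\in f(W)$. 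Hence the harmonic function $\Re h_2-u(c)$ extends continuously to $\D\cup f(W)$ and vanishes on the open boundary arc $f(W)$; by the Schwarz reflection principle it extends harmonically across $f(W)$, so $h_2$ (recovered from $\Re h_2$ and a harmonic conjugate, up to an additive imaginary constant) extends holomorphically through $f(W)$ with $\Re h_2\equiv u(c)\in I_2$ there. By Theorem~\ref{chac_contact_arc} (implication $(2)\Rightarrow(1)$), $f(W)$ is a contact arc for $(\v_t)$, hence contained in a unique maximal contact arc $A_2$.

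It remains to pin down maximality and non-exceptionality. For $p\in W$ one has $f(p)\in f(W)\subseteq A_2$ and $T(f(p))=T(p)<\infty$, so $A_2$ cannot be exceptional, by Proposition~\ref{time-cont-max}(1). Finally I would apply everything proved so far to the conjugation $f^{-1}$ of $(\v_t)$ into $(\phi_t)$ and to the non-exceptional maximal contact arc $A_2$: this shows $f^{-1}(A_2)$ is a contact arc of $(\phi_t)$ contained in some maximal contact arc $A_1'$; but $A_1'\supseteq f^{-1}(A_2)\supseteq W$, so $A_1'=W$ by maximality of $W$, whence $A_2\subseteq f(W)$. Combined with $f(W)\subseteq A_2$ this gives $f(W)=A_2$, which is the assertion.

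The delicate point is the step that produces a genuinely \emph{holomorphic} extension of $h_2$ through $f(W)$ — rather than a merely continuous or homeomorphic one — since that is what Theorem~\ref{chac_contact_arc} requires. It works because of two structural facts: $\tau$ acts as a vertical translation on each vertical line, so it carries the vertical boundary segment $h_1(W)$ to a vertical segment; and $c\in\mathrm{int}(I_1)$, which keeps $\tau$ and $u$ under control near $h_1(W)$. Establishing $c\in\mathrm{int}(I_1)$ is exactly where the non-exceptionality hypothesis is genuinely used, and it is the place where I would expect to have to be most careful.
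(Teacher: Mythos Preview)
Your argument is correct and is a genuinely different route from the paper's. The paper never goes back to the K\oe nigs function or to reflection: it stays on the dynamical side and uses Theorem~\ref{arcomass} directly. From $T(f(p))=T(p)>0$ and the fact that $f(p)$ is not fixed, Theorem~\ref{arcomass}(2) places $f(p)$ on a maximal contact arc $W'$; the trajectory $s\mapsto\varphi_s(f(p))=f(\phi_s(p))$ then sweeps a sub-arc lying in both $W'$ and $f(W)$, and a short overlap argument using varying base points of $W$ identifies $f(W)$ with $W'$. By contrast, you prove directly that $h_2$ extends holomorphically through $f(W)$ with $\Re h_2\equiv u(c)$ via Schwarz reflection and invoke Theorem~\ref{chac_contact_arc}; your symmetry argument with $f^{-1}$ for maximality is arguably cleaner than the paper's overlap step. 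What the paper's approach buys is brevity and avoidance of the model computation; what yours buys is the explicit extra information $\Re h_2|_{f(W)}=u(c)\in I_2$, which is not visible from the dynamical proof.

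One small point worth tightening: in the step showing $c\in\mathrm{int}(I_1)$, you write ``passing to the limit along $W$, $h_1(\phi_t(x_1))=c+i(d+t)$''. This implicitly uses continuity of $p\mapsto\phi_t(p)$ along the boundary arc at the endpoint $x_1\notin W$, which is not immediate. A cleaner version avoids $x_1$ altogether: take any $p\in W$ (so $T(p)<\infty$ by Proposition~\ref{time-cont-max}) and any $t>T(p)$; then $\phi_t(p)\in\D$ and, computing the radial limit, $h_1(\phi_t(p))=\lim_{r\to1}h_1(\phi_t(rp))=\lim_{r\to1}\big(h_1(rp)+it\big)=h_1(p)+it$, whose real part is $c$, contradicting $h_1(\phi_t(p))\in I_1\times\R$ if $c\in\partial I_1$. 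This is the same idea, just without the endpoint issue.
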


\begin{proof}
Since $W$ is not exceptional, then $W\subset \de\D\setminus E(\phi_t)$. Thus, by Proposition \ref{estensionebuona}, $f$ extends as a homeomorphism on $W$ and $f(W)\subset \de\D\setminus E(\v_t)$. Therefore, we are left to show that $f(W)$ is a maximal contact arc. 

Let $p\in W$. By Proposition \ref{estensionebuona}, $T(p)=T(f(p))>0$, hence $\v_s(f(p))=f(\phi_s(p))$, $s\in (0,T(p))$, belong to a maximal contact arc $W'\subset f(W)$ for $(\v_t)$. If $q\in f(W)\setminus W'$, let $x_0\in W$ be such that $f^{-1}(q)=\phi_s(x_0)$,  for some $s>0$. But then, again by Proposition \ref{estensionebuona}, $f(\phi_s(x_0))$, $s\in (0,T(x_0))$ is a contact arc which contains $W'$, contradicting its maximality. Thus $f(W)$ is a maximal contact arc.
\end{proof}

Now we consider boundary fixed points and their type. Again, the result is an application of Proposition \ref{estensionebuona}.

\begin{proposition}\label{corresp-punto}
Let $(\phi_t)$ and $(\v_t)$ be two non-elliptic semigroups in $\D$. Suppose $(\phi_t)$ and $(\v_t)$ are topologically conjugated via the homeomorphism $f:\D\to \D$. Let $p\in \de \D$ be a boundary  fixed point for $(\phi_t)$. Suppose that $p\not\in E(\phi_t)$. Then the unrestricted limit
\[
f(p):=\lim_{z\to p}f(z)\in \de \D
\]
exists and $f(p)\in \de\D \setminus E(\v_t)$ is a boundary  fixed point for $(\v_t)$. Moreover, 
\begin{enumerate}
\item if $p$ is a  boundary regular fixed point for $(\phi_t)$  then  $f(p)$ is a  boundary regular fixed point for $(\v_t)$, 
\item if  $p$ is a boundary super-repelling fixed point of first type (respectively  of second type) for $(\phi_t)$ then $f(p)$ is a a boundary super-repelling fixed point of first type (respectively  of second type) for $(\v_t)$,
\end{enumerate} 
\end{proposition}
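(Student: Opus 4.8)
The plan is to reduce everything to Proposition~\ref{estensionebuona} together with the characterizations of boundary fixed points already established (Proposition~\ref{compon-bound} and the classification into first/second/third type). First I would invoke Proposition~\ref{estensionebuona}: since $p\notin E(\phi_t)$, the conjugacy $f$ extends to a homeomorphism $f:\oD\setminus E(\phi_t)\to\oD\setminus E(\v_t)$, so the unrestricted limit $f(p)$ exists and lies in $\de\D\setminus E(\v_t)$, and moreover $f(\phi_t(p))=\v_t(f(p))$ for all $t\geq 0$ and $T(p)=T(f(p))$. Since $p$ is a boundary fixed point of $(\phi_t)$ its trajectory is constant, i.e. $\phi_t(p)=p$ for all $t\geq 0$; applying the functional equation gives $\v_t(f(p))=f(\phi_t(p))=f(p)$ for all $t\geq 0$, so $f(p)$ is a boundary fixed point of $(\v_t)$. (Alternatively one reads this off from $T(f(p))=T(p)=\infty$ together with the dichotomy in Theorem~\ref{arcomass}, noting $f(p)\notin E(\v_t)$ rules out the exceptional-arc case.)

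Next I would address the three cases by transporting the defining property through $f$, using that the property is visible in the model $h_2(\D)\subset I_2\times\R$ via the set $A(\v_t)$ of \eqref{setA}. For (1): by Proposition~\ref{compon-bound}, boundary regular fixed points of $(\phi_t)$ other than the Denjoy--Wolff point correspond bijectively to connected components $C$ of $A(\phi_t)$ that are horizontal strips $\{a\le\Re\zeta\le b\}$ with $-\infty<a<b<+\infty$, via $p(C)=\lim_{t\to-\infty}h_1^{-1}(c+it)$. Writing $f=h_2^{-1}\circ\tau\circ h_1$ with $\tau$ as in \eqref{formatau}, and noting $\tau$ commutes with vertical translations and has the skew-shear form $\tau(z)=u(\Re z)+i(\Im z+v(\Re z))$, one checks that $\tau$ carries $A(\phi_t)$ homeomorphically onto $A(\v_t)$ and sends a horizontal strip component to a horizontal strip component with finite width (since $u$ is a homeomorphism of intervals, it maps $[a,b]$ to a compact subinterval of $I_2$). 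Hence the component of $A(\phi_t)$ associated to $p$ is sent to a component of $A(\v_t)$ of the same shape, whose associated point is exactly $f(p)$ by continuity of $f$ up to the boundary; by Proposition~\ref{compon-bound} again, $f(p)$ is a boundary regular fixed point. For (2), first type: such $p$ corresponds to a component of $A(\phi_t)$ that is a single vertical line $\{\Re\zeta=c\}$; $\tau$ sends it to the vertical line $\{\Re\zeta=u(c)\}$, which is a component of $A(\v_t)$, so $f(p)$ is of first type. For second type: $p$ is the initial point of a non-exceptional maximal contact arc $W$ of $(\phi_t)$; by Proposition~\ref{corresp-arc}, $f(W)$ is a non-exceptional maximal contact arc of $(\v_t)$, and since $f$ is a homeomorphism on $W\cup\{p\}\subset\oD\setminus E(\phi_t)$ respecting the dynamics, $f(p)$ is the initial point of $f(W)$, hence of second type.

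A cleaner way to organize the second-type and first-type cases uniformly, avoiding re-examining the model, is via Corollary~\ref{super-rep-divide}: a super-repelling boundary fixed point is either the initial point of a maximal contact arc or the landing point of a backward orbit, and only first-type points have a backward orbit landing at them (the explicit backward orbit $t\mapsto h^{-1}(c+it)$). Since $f$ (extended by Proposition~\ref{estensionebuona}) maps backward orbits of $(\phi_t)$ landing at $p$ to backward orbits of $(\v_t)$ landing at $f(p)$ — because $f$ is continuous up to $p$ and intertwines the semigroups — the ``first type'' property is preserved; and since $f$ restricted to $W\cup\{p\}$ maps the arc $W$ onto the non-exceptional arc $f(W)$ with $p\mapsto f(p)$ being the initial point (Proposition~\ref{corresp-arc}), the ``second type'' property is preserved. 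One should also note $f(p)$ is genuinely super-repelling and not regular: this follows since regularity is already covered by case (1) and the type is determined, via Proposition~\ref{compon-bound} and the classification, by which alternative holds.

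The main obstacle, I expect, is the bookkeeping in case (1): one must verify that $\tau$ really maps the component of $A(\phi_t)$ attached to $p$ onto the component of $A(\v_t)$ attached to $f(p)$ (and not merely onto \emph{some} component), i.e. that the correspondence $C\leftrightarrow p(C)$ intertwines with the boundary extension of $f$. This requires combining the continuity of the extended $f$ at $p$ with the description $p(C)=\lim_{t\to-\infty}h^{-1}(c+it)$: one takes $c\in(a,b)$, notes $\tau(c+it)=u(c)+i(t+v(c))$ tends to $u(c)+i(-\infty)$ along the component $u([a,b])\times\R\subset A(\v_t)$, and then invokes $f=h_2^{-1}\circ\tau\circ h_1$ to conclude $f(p)=\lim_{t\to-\infty}h_2^{-1}(u(c)+it)=p(\tau(C))$. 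Everything else is a routine transport of structure through the shear homeomorphism $\tau$ of Lemma~\ref{taucon}, whose commutation with vertical translations is exactly what makes the set $A(\cdot)$ and its component shapes invariant.
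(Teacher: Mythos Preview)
Your proposal is correct and follows essentially the same route as the paper: invoke Proposition~\ref{estensionebuona} for the extension and the functional equation to get that $f(p)$ is a boundary fixed point outside $E(\v_t)$, then use the shear form of $\tau$ from Lemma~\ref{taucon} to transport the set $A(\phi_t)$ onto $A(\v_t)$ and match the shape of the associated connected component (strip versus line) to read off the type via Proposition~\ref{compon-bound}. The only substantive difference is in case~(2), second type: the paper argues by elimination (if $f(p)$ were regular or of first type, apply the already-proved cases to $f^{-1}$ and contradict the hypothesis on $p$), whereas you give a direct argument via Proposition~\ref{corresp-arc}, sending the non-exceptional maximal contact arc $W$ with initial point $p$ to the non-exceptional arc $f(W)$ and using $T(f(p))=\infty$ to identify $f(p)$ as its initial point. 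Both work; your version is slightly more constructive, and your final paragraph on matching $p(C)$ with $f(p)$ makes explicit a step the paper leaves implicit.
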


\begin{proof}
First assume $p$ is a boundary fixed point which does not start an exceptional maximal contact arc. By Proposition \ref{estensionebuona}, $f$ extends continuously at $p$, $f(p)$ is not in the closure of an exceptional maximal contact arc and $T(p)=T(f(p))$. Therefore, $f(p)$ is a boundary fixed point for $(\v_t)$. 

In order to show that the type of $p$ is preserved  let $(\Omega_1=I_1\times \R, h_1, z\mapsto z+ti)$ be the holomorphic model of $(\phi_t)$ and let $(\Omega_2=I_2\times \R, h_2, z\mapsto z+ti)$ be the holomorphic model of $(\v_t)$. Let $Q_1:=h_1(\D)$ and let $Q_2:=h_2(\D)$. By Lemma \ref{taucon}, $(\phi_t)$ and $(\v_t)$ are topologically conjugated if and only if there exists a homeomorphism $\tau: \Omega_1\to \Omega_2$  given by \eqref{formatau}  and $\tau(Q_1)=Q_2$.  By Proposition \ref{conj-model}, $f=h_2^{-1}\circ   \tau\circ h_1$.
 
Then the restriction of $\tau$ to $A(\phi_t)$ induces a homeomorphism from $A:=A(\phi_t)$ to $\tilde{A}:=A(\v_t)$. Indeed, 
\begin{equation*}
\begin{split}
\tilde{A}&=\cap_{t\geq 0} (Q_2+it)=\cap_{t\geq 0} (\tau(Q_1)+it)\\&=\cap_{t\geq 0} \tau(Q_1+it)=\tau\left(\cap_{t\geq 0} (Q_1+it)\right)=\tau(A).
\end{split}
\end{equation*} 
In particular, every connected component of $\tilde{A}$ corresponds to exactly one connected component of $A$.

Since $\tau$ defines a homeomorphism between $A$ and $\tilde{A}$ and maps vertical lines $\{\zeta\in \C: \Re \zeta=c\}$ to vertical lines, it follows that to each connected component of $A$ with non-empty interior (respectively with empty interior) there corresponds exactly one  connected component of $\tilde{A}$ with non-empty interior (resp. with empty interior). 

From this, it follows immediately that if $p$ is a boundary regular fixed point (respectively a super-repelling fixed point of first type), then $f(p)$ is  a boundary regular fixed point (resp. a super-repelling fixed point of first type).
Moreover, if $p$ is a super-repelling fixed point of second type, then so is $f(p)$, for otherwise one can apply the previous conclusion to $f^{-1}$ at $f(p)$ to get a contradiction. 
Hence, (1) and (2) are proved.
\end{proof}

Another  trivial consequence of Proposition \ref{estensionebuona} is that  the continuity of a semigroup at a boundary point is a topological invariant:

\begin{proposition}
Let $(\phi_t)$ and $(\v_t)$ be two non-elliptic semigroups in $\D$. Suppose $(\phi_t)$ and $(\v_t)$ are topologically conjugated via the homeomorphism $f:\D\to \D$. Let $p\in \de \D\setminus E(\phi_t)$ be such that the unrestricted limit $\lim_{z\to p}\phi_t(z)$ exists for all $t\geq 0$. Then the unrestricted limit $f(p)=\lim_{z\to p}f(z)\in \de \D\setminus E(\v_t)$ and the unrestricted limit $\lim_{z\to f(p)}\v_t(z)$ exist for all $t\geq 0$.
\end{proposition}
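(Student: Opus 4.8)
The plan is to read off the statement from the boundary extension and the intertwining identity already furnished by Proposition \ref{estensionebuona}. Since $p\in\de\D\setminus E(\phi_t)$, that proposition provides a homeomorphic extension $f\colon\oD\setminus E(\phi_t)\to\oD\setminus E(\v_t)$; in particular the unrestricted limit $q:=f(p)=\lim_{z\to p}f(z)$ exists and lies in $\de\D\setminus E(\v_t)$, the inverse homeomorphism $f^{-1}$ is continuous at $q$, and for every $t\geq0$ the point $\phi_t(p)$ (the value at $p$ of the non-tangential limit of $\phi_t$, which exists by Theorem \ref{pavelone}) belongs to $\oD\setminus E(\phi_t)$, where $f$ is continuous, and satisfies $f(\phi_t(p))=\v_t(f(p))$.

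Fix $t\geq0$; it suffices to prove that $\v_t$ has an unrestricted limit at $q$ equal to $f(\phi_t(p))$. Let $\{w_n\}\subset\D$ be any sequence with $w_n\to q$, and set $z_n:=f^{-1}(w_n)\in\D$. Since $f^{-1}$ is continuous at $q$ and $q\notin E(\v_t)$, we have $z_n\to p$. By hypothesis the unrestricted limit $\lim_{z\to p}\phi_t(z)$ exists, and since it must coincide with the non-tangential limit it equals $\phi_t(p)$; hence $\phi_t(z_n)\to\phi_t(p)$. As $z_n\in\D$, the intertwining relation $f\circ\phi_t=\v_t\circ f$ on $\D$ gives $\v_t(w_n)=\v_t(f(z_n))=f(\phi_t(z_n))$; and since $\phi_t(z_n)\in\D$ converges to $\phi_t(p)\in\oD\setminus E(\phi_t)$, a point at which the extension of $f$ is continuous, we get $\v_t(w_n)=f(\phi_t(z_n))\to f(\phi_t(p))$. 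As this limit is independent of the sequence $w_n\to q$ in $\D$, the unrestricted limit $\lim_{z\to q}\v_t(z)$ exists and equals $f(\phi_t(p))=\v_t(f(p))$; letting $t$ vary over $[0,+\infty)$ concludes the proof.

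The one point requiring care is the assertion that $\phi_t(p)\notin E(\phi_t)$, so that $f$ is genuinely continuous at $\phi_t(p)$. This is already implicit in Proposition \ref{estensionebuona}, whose identity $f(\phi_t(p))=\v_t(f(p))$ is stated precisely for $p\in\oD\setminus E(\phi_t)$; a direct check also works: if $p$ is a boundary fixed point then $\phi_t(p)=p\notin E(\phi_t)$, whereas if $p$ is not a fixed point and $\phi_t(p)\in\de\D$, then by Theorem \ref{arcomass} the point $p$ lies in the closure of a maximal contact arc $A$, which cannot be exceptional because $p\notin\overline{M(\phi_t)}$; consequently the trajectory of $p$ remains in $\overline A$ up to its life-time, while neither the interior of $A$ nor its final end-point (which is not the Denjoy\,--\,Wolff point and not the initial point of an exceptional arc) meets $E(\phi_t)$, so $\phi_t(p)\in\overline A\setminus E(\phi_t)$. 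This verification, together with the automatic continuity of the extended $f$ at a boundary point approached from inside $\D$, is the only step beyond a formal diagram chase with Proposition \ref{estensionebuona}.
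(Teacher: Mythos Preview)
Your proof is correct and follows essentially the same route as the paper: invoke Proposition~\ref{estensionebuona} to get the homeomorphic extension of $f$ (hence the existence of $f(p)$ and the continuity of $f^{-1}$ at $f(p)$), and then read off the unrestricted limit of $\v_t$ at $f(p)$ from the identity $\v_t=f\circ\phi_t\circ f^{-1}$ on $\D$. The paper's proof is the one-line version of exactly this; your sequence argument simply unpacks it, and your final paragraph verifying $\phi_t(p)\notin E(\phi_t)$ makes explicit a point that the paper leaves implicit both here and in the proof of Proposition~\ref{estensionebuona} itself.
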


\begin{proof}
By Proposition \ref{estensionebuona}, $f$ has unrestricted limit at $p$, $f(p)\not\in E(\v_t)$ and $f^{-1}$ extends continuously at $f(p)$, $f^{-1}(f(p))=p$. Since $\v_t(z)=f(\phi_t(f^{-1}(z)))$, $z\in \D$, the result follows.
\end{proof}

\section{Exceptional maximal contact arcs and Denjoy\,--\,Wolff point}\label{exceptionalb}

In this section we examine the behavior of the topological conjugation on exceptional maximal contact arcs and boundary Denjoy\,--\,Wolff points. 
The behavior of the topological intertwining map on an exceptional maximal contact arc can be quite wild.

We start with the following result:

\begin{proposition}\label{inizio-rego}
Let $(\phi_t)$ and $(\v_t)$ be two non-elliptic semigroups in $\D$. Suppose $(\phi_t)$ and $(\v_t)$ are topologically conjugated via the homeomorphism $f:\D\to \D$. 
\begin{enumerate}
\item If $p\in \de \D$ is a boundary regular fixed point for $(\phi_t)$ which starts an exceptional maximal contact arc, then the non-tangential limit $f(p)=\angle\lim_{z\to p}f(z)$ exists and $f(p)$ is a boundary regular fixed point for $(\v_t)$ (possibly the Denjoy\,--\,Wolff point of $(\v_t)$). In fact, $f(p)\in E(\varphi_t)$.
\item If $p\in \de \D$ is the Denjoy\,--\,Wolff point of $(\phi_t)$ then the non-tangential limit $f(p)=\angle\lim_{z\to p}f(z)$ exists and $f(p)\in \de \D$ is the Denjoy\,--\,Wolff point of $(\v_t)$.
\end{enumerate}
\end{proposition}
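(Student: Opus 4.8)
The strategy is to transport everything to the holomorphic models via Lemma \ref{taucon} and then to argue entirely on the model domains using the Carath\'eodory prime end machinery, exactly as in the proof of Proposition \ref{estensionebuona}, but now paying attention to what happens at the boundary points that were excluded there (the endpoints and interior points of exceptional maximal contact arcs, and the Denjoy\,--\,Wolff point). Let $(\Omega_1=I_1\times\R,h_1,z\mapsto z+it)$ and $(\Omega_2=I_2\times\R,h_2,z\mapsto z+it)$ be the holomorphic models of $(\phi_t)$ and $(\v_t)$, and write $f=h_2^{-1}\circ\tau\circ h_1$ with $\tau(z)=u(\Re z)+i(\Im z+v(\Re z))$ as in \eqref{formatau}, so that $\tau$ maps vertical lines to vertical lines and $\Im\tau(z)-\Im z$ depends only on $\Re z$.

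For part (1): if $p$ is a boundary regular fixed point of $(\phi_t)$ which starts an exceptional maximal contact arc $\hat M$, then by Theorem \ref{pavel}.(2)--(3) we have $\lim_{z\to p}\Re h_1(z)=a$ for some $a\in\partial I_1$ (being the initial point of an exceptional arc forces $\Re h_1\equiv a$ on $\hat M$ with $a$ an endpoint of $I_1$), and $\lim_{z\to p}\Im h_1(z)=-\infty$. The non-tangential limit $\angle\lim_{z\to p}h_1(z)$ exists in $\overline\C$ and equals the point at infinity (or a finite point with real part $a$). Applying $\tau$ — which is continuous up to the boundary line $\{\Re z=a\}$ of $\overline{I_1'\times\R}$ for any compact $I_1'$ with $a$ as endpoint, in the Riemann sphere — one gets that along the radial segment $[0,1)p$ the curve $\tau(h_1(rp))$ converges, and by the Lindel\"of theorem the non-tangential limit $\angle\lim_{z\to p}f(z)=:q$ exists. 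One then checks that $\lim_{z\to q}\Im h_2(z)=-\infty$ as well (this is where one uses that $\tau$ pushes $\Im h_1\to -\infty$ to $\Im h_2\to-\infty$, since the vertical shift $v(\Re z)$ is bounded on the relevant compact real-interval), hence by Theorem \ref{pavel}.(3) either $q$ is a boundary regular fixed point of $(\v_t)$ or $q$ is the Denjoy\,--\,Wolff point; in either case $q$ is a boundary regular fixed point, and since $\Re h_2$ near $q$ equals $u(a)\in\partial I_2$, the point $q$ is either the initial point of an exceptional maximal contact arc of $(\v_t)$ or the Denjoy\,--\,Wolff point, so $q\in E(\v_t)$.

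For part (2): $p$ being the Denjoy\,--\,Wolff point of $(\phi_t)$ is characterized by $\lim_{z\to p}\Im h_1(z)=+\infty$ (Theorem \ref{pavel}.(3)--(4) say this is the unique boundary point where the $\limsup$ of $\Im h_1$ is $+\infty$). Since $\tau$ adds only a bounded vertical shift on any vertical strip of finite width and maps $I_1$ homeomorphically onto $I_2$, one shows that along the radial segment at $p$ the imaginary part of $\tau(h_1(rp))=h_2(f(rp))$ tends to $+\infty$, so the radial limit of $f$ at $p$ is the unique prime end of $h_2(\D)$ with impression containing arbitrarily high points, i.e. the Denjoy\,--\,Wolff point of $(\v_t)$; again Lindel\"of upgrades the radial limit to a non-tangential limit. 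The main obstacle is controlling the vertical translation term $v(\Re z)$ near the boundary value $a\in\partial I_1$ (resp. controlling $\tau$ on the whole half-line $\Re z=a$): one must show that $v$ stays bounded, or at least grows slowly enough, as $\Re z\to a$, which one extracts from $\tau(Q_1)=Q_2$ together with the geometry of the strips $A(\phi_t),A(\v_t)$ and Proposition \ref{arc-koenig} describing the shape of $h_j(\D)$ near the exceptional arc; once the vertical shift is under control, the prime-end/Lindel\"of argument is routine and parallels Proposition \ref{estensionebuona} verbatim.
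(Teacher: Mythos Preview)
There is a genuine gap in part (1). You assert, citing Theorem~\ref{pavel}(2)--(3), that $\lim_{z\to p}\Re h_1(z)=a$ with $a\in\partial I_1$. But Theorem~\ref{pavel}(2) says precisely the opposite: the unrestricted limit of $\Re h_1$ at $p$ exists finitely if and only if $p$ is \emph{not} a regular fixed point. Since here $p$ \emph{is} a boundary regular fixed point, that limit does not exist, and your subsequent attempt to control $v$ near the boundary value $a\in\partial I_1$ is based on a false premise. Moreover, your ``main obstacle'' --- showing that $v$ stays bounded (or grows slowly) as $\Re z\to a\in\partial I_1$ --- is genuinely insurmountable: the examples constructed later in Section~\ref{exceptionalb} take $v(x)=1/x$, $v(x)=2/x$, $v(x)=-1/x$, etc., precisely to exploit the blow-up of $v$ at $\partial I_1$. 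There is no way to extract such control from $\tau(Q_1)=Q_2$ alone.

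The paper's proof avoids this problem entirely. The key observation is that a boundary regular fixed point $p$ corresponds, via Proposition~\ref{compon-bound}, to a connected component $C$ of $A(\phi_t)$ whose closure is a \emph{strip} $\{0\le\Re\zeta\le b\}$ of positive width (with $b<+\infty$). The crucial external input is \cite[Lemma~4.4]{CD}: if $\gamma$ converges \emph{non-tangentially} to $p$, then $\Re h_1(\gamma(t))$ is eventually trapped in a compact subinterval $[a',b']\subset(0,b)\subset I_1$. Since $u$ and $v$ are continuous on $I_1$, they are bounded on $[a',b']$, so $\tau\circ h_1\circ\gamma$ stays in a fixed substrip $\{\tilde a'\le\Re\zeta\le\tilde b'\}$ of $\tilde C$ with imaginary part going to $-\infty$; then \cite[Prop.~4.5]{CD} delivers the convergence of $h_2^{-1}\circ\tau\circ h_1\circ\gamma$ to $p(\tilde C)$. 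Thus the paper never needs to analyze $v$ near $\partial I_1$. Your Lindel\"of-upgrade idea is also unnecessary, since the argument handles an arbitrary non-tangential curve directly. The same mechanism (non-tangential approach keeps $\Re h_1$ compactly inside $I_1$) drives part~(2).
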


\begin{proof}
Let $(\Omega_1=I_1\times\R, h_1, z\mapsto z+it)$ be the holomorphic model of $(\phi_t)$ and let  $(\Omega_2=I_2\times\R, h_2, z\mapsto z+it)$ be the holomorphic model of $(\v_t)$, where $I_1, I_2$ are (possibly unbounded) open intervals in $\R$.

Assume $M$ is the exceptional maximal contact arc for $(\phi_t)$ such that $p$ is its initial point. By Proposition \ref{arc-koenig}, $I\neq \R$ and we can assume without loss of generality that $I_1=(0,\rho)$, with $-\infty<0<\rho\leq +\infty$ and that $h(M)=\{it, t\in \R\}$. 

Let $C$ be the connected component  of $A(\phi_t)$ which corresponds to $p$ (see Proposition \ref{compon-bound}). Thus there exists $0<b\leq \rho$ such that $\overline C=\{\zeta\in \C: 0\leq\Re \zeta\leq b\}$.  Since we are assuming that $p$ is not the Denjoy\,--\,Wolff point of $(\phi_t)$, from Proposition \ref{compon-bound}, it follows that $b<+\infty$. Note that $b=\rho$ if and only if $h(\D)=\Omega_1$, that is, $(\phi_t)$ is a (hyperbolic) group of automorphisms. 

By Lemma \ref{taucon}, $f=h_2^{-1}\circ \tau \circ h_1$, with $\tau(z)=u(\Re z)+i(\Im z+v(\Re z))$, where $u:I_1\to I_2$ is a homeomorphism. With no loss of generality we can assume that $u$ is orientation preserving, and we let $\tilde a=\lim_{x\to 0^+}u(x)$ and $\tilde b=\lim_{x\to b^-}u(x)$.

As remarked in the proof of Proposition \ref{corresp-punto}, $\tilde{C}:=\tau(C)$ is a connected component in $A(\v_t)$. Clearly, $\overline{\tilde{C}}=\{\zeta\in \C: \tilde{a} \leq \Re \zeta\leq\tilde{b}\}$. Let $p(\tilde{C})$ denote the associated boundary regular fixed point for $(\v_t)$. 

Let  $\gamma:[0,1)\to  \D$ be a continuous curve which converges  non-tangentially to $p$. 
By  \cite[Lemma 4.4]{CD},  there exist $a<a'<b'<b$ and $t_0<1$ such that $a'\leq \Re h_1( \gamma(t))\leq b'$ for all $t\in (t_0,1)$. Therefore, there exist $\tilde{a} <\tilde{a}'<\tilde{b}'<\tilde{b}$ such that $\tilde{a}'\leq \Re \tau(\Re(h_1(\gamma(t))))\leq \tilde{b}'$ for all $t\in(t_0,1)$. Therefore, the continuous curve $\tau\circ h_1\circ \gamma$ is all contained in the strip $\{\zeta\in \C: \tilde{a}'\leq \Re\zeta\leq \tilde{b}'\}$ and its imaginary part converges to $-\infty$. It follows then from \cite[Prop. 4.5]{CD} that 
\[
\lim_{t\to 1} f(\gamma(t))=\lim_{t\to 1}h_2^{-1}(\tau(h_1(\gamma(t)))=p(\tilde{C}).
\]
So,  there exists $\angle \lim_{z\to p}f(z)$ and its value is $p(\tilde{C})$, which is a boundary regular fixed point for $(\phi_t)$.

(2) In case $p$ is the Denjoy\,--\,Wolff point of $(\phi_t)$, the argument is similar and we omit the proof. 
\end{proof} 

\begin{remark}\label{iperb-fijo}
From the previous proof it follows that if $I_2$ is relatively compact in $\R$, then $\overline{\tilde{C}}=\{\zeta\in \C: \tilde{a} \leq \Re \zeta\leq\tilde{b}\}$, with $|\tilde{a}|, |\tilde{b}|<+\infty$. Therefore, by Proposition \ref{compon-bound}, in this case $f(p)$ is not the Denjoy\,--\,Wolff point of $(\v_t)$. That is,  $f(p)$ can be the Denjoy\,--\,Wolff point of $(\v_t)$ only if the semigroup is parabolic.  
\end{remark}

In the following examples we show that the unrestricted limit of the homeomorphism $f$ might no exist at the Denjoy\,--\,Wolff point or at boundary regular fixed points which start an exceptional maximal contact arc. 

\begin{example}[{\sl Non existence of unrestricted limit at Denjoy\,--\,Wolff point}] Let $Q=\{ z\in \strip: \Im z \, \Re z>1\}$ and let $h:\D\to Q$ be a Riemann map. Consider the semigroup $(\phi_t)$ defined by $\phi_t(z):=h^{-1}(h(z)+it)$, $t\geq 0$. The point $p=\lim_{t\to+\infty} h^{-1}(\frac{1}{2}+it)$ is the Denjoy\,--\,Wolff point of $(\phi_t)$. Let $v:(0,1)\to \R$ be defined by $v(x)=-1/x$ for all $x\in (0,1)$, and define $\tau (z):=\Re z+i(\Im z+v(\Re z))$. Write
$  \tilde Q= \tau(Q)=\{ z\in \strip: \Im z >0\}$ and let $\tilde h:\D \to \tilde Q$ be a Riemann map. The semigroup $(\varphi_t)$ defined by $\varphi_t(z):=\tilde{h}^{-1}(\tilde{h}(z)+it)$, $t\geq 0$ is topologically conjugated to $(\phi_t)$ via  $f:=\tilde h^{-1}\circ \tau\circ h$. We claim that the unrestricted limit of $f$ at $p$ does not exist. The point $q:=\angle \lim_{z\to p}f(z)$ is the Denjoy\,--\,Wolff point of $(\varphi_t)$. Notice that $h$ and $\tilde h$ can be extended to homeomorphisms of $\overline \D$ onto $\overline Q $ and $\overline {\tilde Q}$, respectively.  We also denote by $h$ and $\tilde h$  those extensions.
Take $z_n=h^{-1}\left(\frac{1}{n}+(1+n)i\right)\in \D$. By the properties of $h$, we deduce that the sequence $(z_n)$ converges to $p$. Moreover,
$$
f(z_n)=\tilde h^{-1}(\tau(h(z_n)))=\tilde h^{-1} \left(\frac{1}{n}+i\right)\to \tilde h^{-1}(i)\neq q.
$$
\end{example}

\begin{example}[{\sl Non existence of unrestricted limit at boundary regular fixed points starting an exceptional maximal contact arc}]
Let $h:\D \to \strip$ be the Riemann map given by $h(z)=\frac12+\frac{i}{\pi}\log \frac{1+z}{1-z}$. Let $(\phi_t)$ be the (semi)group of hyperbolic automorphisms of $\D$ defined by $\phi_t(z):=h^{-1}(h(z)+it))$, $t\geq 0$. Then $1=\lim_{\Im z\to +\infty}h^{-1}(z)$ is the Denjoy\,--\,Wolff point of $(\phi_t)$, while $-1=\lim_{\Im z \to -\infty}h^{-1}(z)$ is a boundary regular fixed point of $(\phi_t)$. Let  $\tau:\strip \to \strip$ the homeomorphism defined by $\tau(z)=\Re z+i(\Im z +\frac{1}{\Re z})$. Let $f:=h^{-1}\circ \tau\circ h$ be a homeomorphism of $\D$. Then $f\circ \phi_t=\phi_t\circ f$. By Proposition \ref{inizio-rego}, $f$ has non-tangential limit at $-1$ and it is easy to see that $\angle\lim_{z\to -1}f(z)=-1$, for instance by taking the limit of $f$ along the (non-tangential) curve $(0,+\infty)\ni s\mapsto h^{-1}(1/2-s)$ converging to $-1$.

Consider now the curve $\gamma:(0,1)\to \strip$ defined by $\gamma(s):=(1-s)+i(s-1)^{-1}$. Then $h^{-1}(\gamma(s))$ is a curve in $\D$ which converges (tangentially) to $-1$ as $s\to 1$. However, 
\[
\lim_{s\to 1}f(h^{-1}(\gamma(s))=\lim_{s\to 1}h^{-1}(\tau(\gamma(s)))=\lim_{s\to 1}h^{-1}(1-s)=h^{-1}(0)\neq -1.
\]
Therefore, $f$ is not continuous at $-1$.
\end{example}

As we already proved in Proposition \ref{top1}, every non-elliptic semigroup of holomorphic self-maps can be topologically conjugated to a hyperbolic one (with $\strip$ as model domain). Reversing this implication, it follows that every non-elliptic semigroup can be topologically conjugated to one whose model domain is $\R\times \R$ (that is, a parabolic semigroup of zero hyperbolic step). In this case, there exist no exceptional maximal contact arcs, which show that exceptional maximal contact arcs are not topological invariants. 

However, if one stays in the class of hyperbolic semigroups, the question whether an exceptional maximal contact arc is a topological invariant is less trivial, as the following example shows:

\begin{example}
Let $\Omega=\{ z\in \strip: \Im z \, \Re z>-1\}$ and let $h:\D\to \Omega$ be a Riemann map. For $t\geq 0$, let $\varphi_t(z):=h^{-1}(h(z)+it)$. It is easy to see that $(\v_t)$ is a semigroup of holomorphic self-maps of $\D$. The point $p=\lim_{t\to-\infty} h^{-1}(t-i/2t)$ is a boundary super repelling fixed point of third type for this semigroup. Let $v:(0,1)\to \R$ be a continuous function and define $\tau (z):=\Re z+i(\Im z+v(\Re z))$. Write
$  \Omega_v:= \tau(\Omega)=\{ z\in \strip: \Im z >-\frac{1}{\Re z}+v(\Re z)\}$. Let $h_v:\D \to \Omega_v$ be a Riemann map. Let $\phi_t(z):=h_v^{-1}(h_v(z)+it)$, $t\geq 0$. The semigroup $(\phi_t)$ is clearly topologically conjugated to $(\varphi_t)$. On the one hand, taking $v(x):=2/x$, we obtain a semigroup whose unique fixed point is its Denjoy\,--\,Wolff point and the map $f=h_v^{-1}\circ\tau\circ h $ sends the exceptional maximal contact arc starting  at $p$ to the Denjoy\,--\,Wolff point of $(\phi_t)$. In particular, the boundary super repelling fixed point of third type $p$ is sent to the Denjoy\,--\,Wolff point. On the other hand, taking $v(x)=1/x$ we obtain a semigroup whose unique fixed point is its Denjoy\,--\,Wolff point and the map $f=h_v^{-1}\circ\tau\circ h $ sends the exceptional maximal contact arc starting  at $p$  onto an exceptional maximal contact arc for $(\phi_t)$ having a  non-fixed point as initial point. In particular, the boundary super repelling fixed point of third type $p$ is sent to a contact, not fixed, point.
\end{example}

\begin{proposition}\label{top-exc-arc}
Let $(\phi_t)$ be  a hyperbolic semigroup of holomorphic self-maps of $\D$ with Denjoy\,--\,Wolff point $\sigma\in \de \D$. Let $M$ be an exceptional maximal contact arc for $(\phi_t)$ whose initial point is $x_0\in \de \D$.
\begin{enumerate}
\item if $x_0$ is a  regular boundary fixed point for $(\phi_t)$ and $(\v_t)$ is a hyperbolic semigroup of holomorphic self-maps of $\D$, topologically conjugated to $(\phi_t)$ via the homeomorphism  $f: \D \to \D$ then $(\v_t)$ has an exceptional maximal contact arc  whose initial point is $\angle\lim_{z\to x_0}f(z)$. 
\item If $x_0$ is not a boundary regular fixed point for $(\phi_t)$ then there exists a homeomorphism $f: \D \to \D$ such that $(\v_t:=f^{-1}\circ \phi_t\circ f)$ is a hyperbolic semigroup of holomorphic self-maps of $\D$ and $\lim_{z\to p}f(z)=\sigma$ for all $p\in M$.
\end{enumerate}
\end{proposition}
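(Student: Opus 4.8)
The plan is to work entirely on the level of the holomorphic models, using Lemma~\ref{taucon} and Proposition~\ref{arc-koenig}. Fix the holomorphic model $(\Omega_1=I_1\times\R,h_1,z\mapsto z+it)$ of $(\phi_t)$ and the model $(\Omega_2=I_2\times\R,h_2,z\mapsto z+it)$ of $(\v_t)$; since both semigroups are hyperbolic, $I_1$ and $I_2$ are bounded open intervals. By Proposition~\ref{arc-koenig}(2), after an affine normalization we may assume $I_1=(0,\rho_1)$ and that the exceptional maximal contact arc $M$ satisfies $h_1(M)=\{it:t>d\}$ for some $d\in[-\infty,+\infty)$, so the line $\{\Re\zeta=0\}$ (or the half-line $\{it:t>d\}$ together with the interval that $\partial h_1(\D)$ occupies below it) sits on $\partial h_1(\D)$; the initial point $x_0$ corresponds, via Proposition~\ref{compon-bound}, to a connected component $C$ of $A(\phi_t)$ with $\overline C=\{0\le\Re\zeta\le b\}$ for some $0<b\le\rho_1$. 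The key point is that $x_0$ is a regular boundary fixed point precisely when $b>0$ is such that $C$ has nonempty interior, i.e.\ $b>0$ strictly — which is automatic here — and the relevant distinction in the two cases is whether $\Im h_1(z)\to-\infty$ as $z\to x_0$ (fixed point) or $\Im h_1$ stays bounded below (contact, non-fixed point).

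For part~(1): by Lemma~\ref{taucon}, $f=h_2^{-1}\circ\tau\circ h_1$ with $\tau(z)=u(\Re z)+i(\Im z+v(\Re z))$ and $u\colon I_1\to I_2$ an orientation-preserving homeomorphism. As in the proof of Proposition~\ref{corresp-punto}, $\tau$ carries $A(\phi_t)$ onto $A(\v_t)$, hence the component $C$ onto a component $\tilde C=\tau(C)$ with $\overline{\tilde C}=\{\tilde a\le\Re\zeta\le\tilde b\}$ where $\tilde a=\lim_{x\to0^+}u(x)$. Now I use that $x_0$ being a \emph{regular} boundary fixed point forces $\Im h_1\to-\infty$ along the trajectory into $x_0$ but with $\Re h_1$ bounded away from the endpoint $0$ of $I_1$ except asymptotically; by Proposition~\ref{inizio-rego}(1), $\angle\lim_{z\to x_0}f(z)=p(\tilde C)$ exists and is a boundary regular fixed point (or Denjoy--Wolff point) of $(\v_t)$. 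It remains to show $(\v_t)$ actually has an exceptional maximal contact arc starting there, i.e.\ $\tilde a\in\partial I_2$. This is where Proposition~\ref{arc-koenig}(2) is applied in reverse: $M$ exceptional means the \emph{lower} endpoint $0$ of the component $C$ coincides with the endpoint of $I_1$; since $\tau$ maps $I_1$ homeomorphically onto $I_2$ and maps the endpoint to the endpoint, $\tilde a=\lim_{x\to0^+}u(x)$ is an endpoint of $I_2$, and $h_2(\D)$ along $\{\Re\zeta=\tilde a\}$ has the form $\{\tilde a+it:t>d'\}$ for suitable $d'$. By Proposition~\ref{arc-koenig}(2) this is exactly the condition for $(\v_t)$ to possess an exceptional maximal contact arc, and its initial point is $p(\tilde C)=\angle\lim_{z\to x_0}f(z)$.

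For part~(2): here $x_0$ is a contact, non-fixed point, so the corresponding component $C$ of $A(\phi_t)$ is degenerate — it reduces to the single line $\{\Re\zeta=0\}$, and $h_1(M)=\{it:t>d\}$ with $d>-\infty$, while $\partial h_1(\D)$ near $\Re\zeta=0$ looks like a graph $\Im\zeta=g(\Re\zeta)$ with $g$ continuous on $(0,\epsilon)$ and $g(x)\to d$ as $x\to0^+$. The idea is to choose $\tau(z)=\Re z+i(\Im z+v(\Re z))$ with $v$ continuous on $I_1$ and $v(x)\to+\infty$ fast enough as $x\to0^+$ that $h_2(\D):=\tau(h_1(\D))$ has the property that the only boundary point of $h_2(\D)$ reachable by a null chain over $\Re\zeta=0$ is the point at infinity — concretely, take $v$ with $g(x)+v(x)\to+\infty$, so that along $\Re\zeta=0$ the graph boundary of $h_2(\D)$ escapes to $+i\infty$; then by Theorem~\ref{pavel}(3) the whole prime end over $\Re\zeta=0$ collapses to the Denjoy--Wolff point $\sigma$ of $(\v_t)$ (which is unchanged since $\tau$ fixes $\Im\zeta=+\infty$ asymptotically and $I_2=I_1$ as intervals with the same endpoint geometry at the top). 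Concretely one models the two illustrative choices of the preceding example: $v(x)=2/x$ forces $f(M)$ to collapse to $\sigma$, while $v(x)=1/x$ keeps an exceptional arc but moves its initial point to a non-fixed point. One then checks $(\v_t)$ is still hyperbolic (the interval $I_2$ has the same finite length as $I_1$, only the lower boundary profile has changed) and that $\lim_{z\to p}f(z)=\sigma$ for every $p\in M$, using the no-Koebe-arcs argument exactly as in the proof of Proposition~\ref{prime-exc} (equation~\eqref{limitexc}) and in the preceding example.

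The main obstacle I expect is part~(2): verifying that a single explicit choice of $v$ simultaneously (i)~keeps $\Omega_v:=\tau(\Omega_1)$ a domain for which $h_2^{-1}$ extends so that the whole arc $M$ maps to $\sigma$, and (ii)~leaves the resulting semigroup hyperbolic rather than accidentally parabolic. The hyperbolicity is the subtle part — one must make sure that shifting the lower boundary profile by $v$ does not change the essential width $\rho$ of the strip, i.e.\ that $\Omega_v$ still contains a maximal vertical strip of the same finite width and no wider one; this is a geometric check on $\Omega_v=\{z\in\strip_{\rho_1}:\Im z>g(\Re z)+v(\Re z)\}$ that should follow because modifying $g$ only affects the lower envelope while the top is untouched, but it needs to be stated carefully.
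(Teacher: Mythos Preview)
Your treatment of part~(1) is essentially correct but takes a different route from the paper. You argue directly on the model: since $x_0$ is regular, the associated component $C$ of $A(\phi_t)$ is a genuine strip $\{0<\Re\zeta<b\}$ whose left edge sits on $\partial I_1$; the homeomorphism $u$ carries that edge to $\partial I_2$, so the image component $\tilde C$ has its edge on $\partial I_2$, and the whole line $\{\Re\zeta=\tilde a\}$ is a free boundary arc of $h_2(\D)$, producing (via reflection and Proposition~\ref{arc-koenig}) an exceptional maximal contact arc for $(\v_t)$ starting at $p(\tilde C)$. The paper instead argues by contradiction: from Remark~\ref{iperb-fijo} one knows $f(x_0)$ is a regular boundary fixed point of $(\v_t)$ different from the Denjoy--Wolff point; if $f(x_0)\notin E(\v_t)$ then Proposition~\ref{estensionebuona} gives $f^{-1}$ an unrestricted limit there with value $x_0$, forcing $x_0\notin E(\phi_t)$, a contradiction. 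The paper's argument is shorter and reuses the main extension theorem; yours is more self-contained but needs the reflection step spelled out.

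Part~(2), however, has a genuine gap. You open with ``here $x_0$ is a contact, non-fixed point,'' but the hypothesis is only that $x_0$ is \emph{not a boundary regular fixed point}. This leaves two sub-cases: (i)~$x_0$ is a contact non-fixed point (so $d>-\infty$ by Proposition~\ref{arc-koenig}), and (ii)~$x_0$ is a \emph{super-repelling} boundary fixed point of third type (so $d=-\infty$). You treat only case~(i). The paper handles both, and they require different constructions of~$v$. In your case~(i) construction you need points of $\Omega\setminus Q$ with real part tending to $0$ and bounded imaginary part, which exist precisely because $d>-\infty$; any $v$ with $v(x)\to+\infty$ as $x\to0^+$ then pushes those boundary points to $\Im\to+\infty$, collapsing the arc. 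In case~(ii) the full line $\{it:t\in\R\}$ lies in $\partial Q$, and the nearby boundary points $s_n+iy_n\in\Omega\setminus Q$ satisfy $y_n\to-\infty$ (controlled by $\lambda_n:=\sup\{y:s+iy\notin Q,\ s\in(0,1/n]\}\to-\infty$ since $x_0$ is super-repelling). Here one must choose $v$ along a subsequence with $v(s_{n_k})\approx-2\lambda_{n_k}$ so that $y_{n_k}+v(s_{n_k})\to+\infty$; a generic $v$ with $v(x)\to+\infty$ is not enough, since $y_n$ may escape to $-\infty$ faster. Your sketch misses this case entirely, and your description of ``the corresponding component $C$ degenerate to a line'' is also off: a contact non-fixed $x_0$ is not a fixed point at all and does not correspond to any component of $A(\phi_t)$ via Proposition~\ref{compon-bound}.

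Two smaller points: the lower boundary profile $g$ is only upper semicontinuous (from openness of $h_1(\D)$), not necessarily continuous, so the paper works with sequences of boundary points rather than with $g$ directly; and your worry about hyperbolicity is a non-issue, since in the construction $\tau$ is a self-map of $\Omega_1=(0,\rho_1)\times\R$, so the model interval is unchanged and $(\v_t)$ is automatically hyperbolic.
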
 
\begin{proof} (1) By Remark \ref{iperb-fijo}, $f(x_0):=\angle\lim_{z\to x_0}f(z)$ is a boundary regular fixed point of $(\v_t)$ different from the Denjoy\,--\,Wolff point. If $f(x_0)$ does not belong to the closure of an exceptional maximal contact arc for $(\v_t)$, by Proposition \ref{estensionebuona},  $f^{-1}$ has unrestricted limit at $f(x_0)$ and 
\[
x_0=\lim_{(0,1)\ni r\to 1}f^{-1}(f(rx_0))=f^{-1}(f(x_0))\not\in E(\phi_t),
\]
a contradiction. Therefore $f(x_0)$ belongs to the closure of an exceptional maximal contact arc for $(\v_t)$ and it is in fact its initial point, being a fixed point.

(2) Let $(\Omega=(0,b)\times \R, h, z\mapsto z+ti)$, $0<b<+\infty$ be the holomorphic model of
$(\phi_t)$ and let $Q:=h(\D)$. Let $M$ be an exceptional maximal contact arc with initial point $x_0$. By Theorem \ref{chac_contact_arc}, we can assume without loss of generality that $\Re h(z)=0$ for all $z\in M$. 

Assume $x_0$ is not a boundary {\sl regular} fixed point for $(\phi_t)$.

If $x_0$ is a super-repelling boundary fixed point for $(\phi_t)$, let
\[
\lambda_n=\sup\{y: s+iy\not\in Q, s\in (0,1/n]\}.
\]
 Since $x_0$ is
 super-repelling, by Theorem \ref{pavel}, $\lim_{r\to 1} \Re h(rx_0)=0$ and
$\lim_{r\to 1} \Im h(rx_0)=-\infty$. Thus $\lambda_n$ tends to $-\infty$. For
each $n\in \N$, take $0<s_n\leq 1/n$ and $y_n$ such that $s_n+iy_n\not\in Q$ and
$y_n\geq \lambda_n-1/n$. Take a monotone subsequence $(s_{n_k})$ of
$(s_n)$ and a continuous function $v:I\to I$ such that
$v(s_{n_k})=-2\lambda_{n_k}$. Define $\tau: \Omega \to \Omega$ by $\tau(x+iy):=x+i(y+v(x))$. Note that $\tau$ is a homeomorphism. Let
$h_2:\D \to \tau(Q)$ be a Riemann map. Let  $(\v_t)$ be the semigroup in $\D$ defined by
$\varphi_t(z):=h_2^{-1}(h_2(z)+it)$. Clearly, $(\v_t)$  is topologically conjugated to $(\phi_t)$. Since
$w_n=\tau(s_n+iy_n)\not\in \tau(Q)$ and 
\[
\lim_k\Im (w_{n_k})=\lim_k(y_{n_k}+v(s_{n_k}))\geq \lim_k(-\lambda_{n_k}+2/{n_k})=+\infty,\] 
it follows that $f$ maps the exceptional maximal contact arc $M$ to the Denjoy\,--\,Wolff point of $(\v_t)$.

Next, suppose that $x_0$ is not a boundary fixed point. Then, by
Theorem \ref{pavel}, there exists $\lim_{r\to 1} h(rx_0)= iy_0$ with $y_0\in
\R$. Therefore there exist points $x_n+iy_n\in\Omega\setminus Q$ such $x_n$ goes to $0$
and $y_n$ goes to $y_0$. Take a continuous function $v:I\to I$ such that
$\lim_{x\to 0}v(x)=+\infty$. As before, define $\tau:\Omega\to\Omega$ by $\tau(x+iy):=x+i(y+v(x))$ and let
$h_2:\D\to\tau(Q)$ be any Riemann map. By construction, $\tau
(x_n+iy_n)\in \Omega\setminus h_2(\D) $ and $\Im \tau (x_n+iy_n)$ goes to
$+\infty$. Therefore the  semigroup $(\v_t)$ defined by $\varphi_t(z):=h_2^{-1}(h_2(z)+it)$ for $t\geq 0$ is topologically conjugated to $(\phi_t)$ and  $f$ maps the exceptional maximal contact arc $W$ to the Denjoy\,--\,Wolff point of $(\v_t)$.
\end{proof} 

The $\omega$-limit of $f$ at  an exceptional maximal contact arc is described by the following proposition:

 \begin{proposition} Let $(\phi_t)$ and $(\v_t)$ be two non-elliptic semigroups in $\D$ with holomorphic models $(\Omega_1=I_1+i\R, h_1, z\mapsto z+ti)$ and  $(\Omega_2=I_2+i\R, h_2, z\mapsto z+ti)$, respectively. Suppose $(\phi_t)$ and $(\v_t)$ are topologically conjugated via the homeomorphism $f:\D\to \D$. Let $p$ and $q$ be the Denjoy\,--\,Wolff points  of $(\phi_t)$ and $(\varphi_t)$, respectively. Assume that $W$ is an exceptional maximal contact arc for $(\phi_t)$. Denote by $S=\{ z\in \D\setminus\{0\}: z/|z|\in W\}$ and let $x_0$ be the initial point of $W$. Then the set
$$
E(W)=\left\{ w\in \overline \D: \exists \{z_n\}  \subset S,  z_n \to z\in \overline W,   f(z_n)\to w\right\}
$$
is a compact connected arc in $\partial \D$ containing  $q$ which is contained in an exceptional maximal contact arc for $(\v_t)$.
   \end{proposition}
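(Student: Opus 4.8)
The plan is to recognise $E(W)$ as a cluster set of the conjugacy and then to locate that cluster set using the holomorphic model of $(\v_t)$. First I would record two elementary facts: $\overline S\cap\de\D=\overline W$, and, $f$ being a homeomorphism of $\D$ onto $\D$, any $z_n\in S$ with $|z_n|\to 1$ has $|f(z_n)|\to 1$. From these one checks directly that
\[
E(W)=\bigcap_{0<r<1}\overline{f\bigl(S\cap\{|z|>r\}\bigr)}\subset\de\D ,
\]
closures taken in $\oD$. Since each $S\cap\{|z|>r\}$ is connected, this exhibits $E(W)$ as a decreasing intersection of nonempty, compact, connected sets, hence $E(W)$ is automatically a nonempty compact connected subset of $\de\D$; what remains is to show that it contains $q$ and is contained in the closure of an exceptional maximal contact arc of $(\v_t)$. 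By Lemma~\ref{taucon} I would write $f=h_2^{-1}\circ\tau\circ h_1$ with $\tau(\zeta)=u(\Re\zeta)+i(\Im\zeta+v(\Re\zeta))$, $u:I_1\to I_2$ an orientation-preserving homeomorphism (as in the proof of Proposition~\ref{inizio-rego}) and $v:I_1\to\R$ continuous. Using Proposition~\ref{arc-koenig} and Theorem~\ref{chac_contact_arc} I normalise the model of $(\phi_t)$ so that $W$ lies on a wall of $\Omega_1$: there is a finite $e_1\in\{\inf I_1,\sup I_1\}$ with $\Re h_1\equiv e_1$ on $W$ and $h_1(W)=\{e_1+it:t>d\}$, the final endpoint $p$ of $W$ (the Denjoy--Wolff point of $(\phi_t)$) corresponding to $\Im h_1\to+\infty$.

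Next I would show $q\in E(W)$. Pick a sequence $z_k\in S$ approaching $p$ non-tangentially (such sequences exist since $S$ is a one-sided neighbourhood of the arc $W$ and $p\in\overline W$). By Proposition~\ref{inizio-rego}(2), $\angle\lim_{z\to p}f(z)=q$, so $f(z_k)\to q$, whence $q\in E(W)$; this also anchors the connected arc $E(W)$ at $q$.

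The heart of the proof is the containment. The point is to locate $\tau(h_1(S\cap\{|z|>r\}))$ for $r$ near $1$, and the geometric key estimate is
\[
\sup\bigl\{\,|\Re h_1(z)-e_1|:\ z\in S,\ |z|>r\,\bigr\}\xrightarrow[\ r\to 1\ ]{}0 .
\]
That $\Re h_1\equiv e_1$ on $W$ and that $h_1$ extends conformally across $W$ (Theorem~\ref{chac_contact_arc}) control the supremum away from the two endpoints of $\overline W$; near $p$ one also uses that a point of $S$ failing the obvious safe-neighbourhood bound must approach $p$ non-tangentially, so Proposition~\ref{inizio-rego}(2) again applies; near the initial endpoint of $W$ one combines the conformal extension of $h_1$ with the no-Koebe-arcs/diameter estimates of \cite{BrGu}, exactly as in the proof of Proposition~\ref{prime-exc}. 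Put $e_2:=\lim_{I_1\ni x\to e_1}u(x)\in\{\inf I_2,\sup I_2\}$. If $e_2$ is finite, the estimate forces $\tau(h_1(S\cap\{|z|>r\}))$ into a thin column $\{\zeta\in\Omega_2:|\Re\zeta-e_2|<\eta_r\}$ with $\eta_r\downarrow 0$; hence every limit point in $\C\Po^1$ of $\tau(h_1(z_n))$, for $z_n\in S$ with $|z_n|\to1$, lies on $\{\Re\zeta=e_2\}$, and such points, remaining in $Q_2=h_2(\D)$, cannot accumulate at an interior point of $Q_2$ (otherwise $z_n=f^{-1}(f(z_n))$ would converge inside $\D$). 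Therefore all these limit points lie on $\de Q_2\cap\{\Re\zeta=e_2\}$; this accumulation set is compact, connected, and contains the direction $\Im\zeta\to+\infty$ (from points of $S$ near $p$), so it is a closed upward ray on the wall, and by Proposition~\ref{arc-koenig} it equals $\overline{h_2(W')}$, where $W'$ is the exceptional maximal contact arc of $(\v_t)$ carried by that wall, with final endpoint $q$. This gives $E(W)\subset\overline{W'}$. If instead $e_2$ is infinite, then $\Omega_2$ has no wall on that side and $\Re\tau(h_1(z))\to\pm\infty$ on $S\cap\{|z|>r\}$ as $r\to1$; using the absorbing property $\cup_{t\ge0}(Q_2-it)=\Omega_2$ one identifies the end of $Q_2$ at that infinity as a single prime end equal to $\hat h_2(q)$, whence $E(W)=\{q\}$. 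In either case $E(W)$ is a compact connected sub-arc of $\de\D$ containing $q$ and contained in the closure of an exceptional maximal contact arc of $(\v_t)$ (degenerating to $\{q\}$ in the second case).

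The main obstacle is precisely the two ingredients just highlighted: the uniform smallness of $|\Re h_1-e_1|$ near the two endpoints of $\overline W$ (where $h_1$ need not be well behaved in an unrestricted sense), and the prime-end identification at the relevant wall, respectively end, of $\Omega_2$ — that is, pinning down which prime end of $Q_2$ the thin columns $\tau(h_1(S\cap\{|z|>r\}))$ converge to. Everything else (compactness and connectedness of cluster sets, Lemma~\ref{taucon}, Proposition~\ref{inizio-rego}, and the boundary dictionary of Propositions~\ref{compon-bound}, \ref{prime-exc} and \ref{arc-koenig}) is routine bookkeeping.
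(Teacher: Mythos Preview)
Your cluster-set description $E(W)=\bigcap_{0<r<1}\overline{f(S\cap\{|z|>r\})}$ is correct, and deducing compactness and connectedness from a nested intersection of compact connected sets is neater than the paper's argument, which proves connectedness by contradiction via an intermediate-value construction on paths $\Gamma_n\subset\D$. Your identification of $q\in E(W)$ via Proposition~\ref{inizio-rego}(2) matches the paper.

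The gap is in your containment step. The ``key geometric estimate''
\[
\sup\bigl\{|\Re h_1(z)-e_1|:z\in S,\ |z|>r\bigr\}\xrightarrow[r\to1]{}0
\]
is simply false in general. Take the hyperbolic group with $h_1:\D\to\strip$ the standard map $h_1(z)=\tfrac12+\tfrac{i}{\pi}\log\tfrac{1+z}{1-z}$, $e_1=0$, $W$ the upper arc, $S$ the upper half-disc, $p=1$. For any fixed $c\in(0,1/2)$ the curve $h_1^{-1}(c+it)$, $t\to+\infty$, approaches $p$ along a direction in the upper half-plane, hence is eventually in $S$ with $|z|\to1$, yet $\Re h_1\equiv c$ on it. So the supremum does not tend to $0$. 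The difficulty is precisely that at the Denjoy--Wolff point (and at $x_0$ when it is a \emph{regular} fixed point) $\Re h_1$ has no unrestricted limit (Theorem~\ref{pavel}(2)); your justification ``a point of $S$ failing the safe-neighbourhood bound must approach $p$ non-tangentially'' is backwards, and ``conformal extension of $h_1$'' near $x_0$ is unavailable exactly when $x_0$ is regular.

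The paper avoids all of this: for the containment it invokes Proposition~\ref{estensionebuona} directly. If some $w\in E(W)$ lay in $\de\D\setminus E(\v_t)$, then $f^{-1}$ would extend continuously at $w$ with $f^{-1}(w)\in\de\D\setminus E(\phi_t)$; but the defining sequence $z_n\to z\in\overline W\subset E(\phi_t)$ with $f(z_n)\to w$ forces $f^{-1}(w)=z\in E(\phi_t)$, a contradiction. That one line replaces your entire model computation. Keep your connectedness argument; replace the containment argument by this appeal to Proposition~\ref{estensionebuona}.
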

  \begin{proof} 
   Let $Q_1:=h_1(\D)$ and let $Q_2:=h_2(\D)$. By Lemma \ref{taucon}, $(\phi_t)$ and $(\v_t)$ are topologically conjugated if and only if there exists a homeomorphism $\tau: \Omega_1\to \Omega_2$  given by \eqref{formatau}  and $\tau(Q_1)=Q_2$.  By Proposition \ref{conj-model}, $f=h_2^{-1}\circ   \tau\circ h_1$.
  
  Since $\angle \lim_{z\to p} f(z)=q$ by Proposition \ref{inizio-rego}, it follows that $q\in E(W)$. Moreover, it is easy to see that $E(W)$ is a compact subset of $ \partial \D$. Therefore  we are left to check that $E(W)$ is connected. Indeed, assume this is not the case. Then there exist two compact sets $A$ and $B$ such that $E(W)=A_1\cup A_2$ and $A_1\cap A_2=\emptyset$. Write $k=d(A,B)>0.$ For $j=1,2$, take $w_j\in A_j$, $z_{n,j}\in \D$, for all $n$,  with  $ \frac{z_{n,j}}{|z_{n,j}|}\in \overline W,\   z_{n,j} \to z_j\in \overline W$ and  $f(z_{n,j})\to w_j$. We may assume that $d(f(z_{n,j}),w_j)<k/4$. 
 Let $C_n$ be the arc in $\overline W$ that joins $\frac{z_{n,1}}{|z_{n,1}|}$ with $\frac{z_{n,2}}{|z_{n,2}|}$ and 
  $$
  \Gamma_n= [z_{n,1},r_n z_{n,1}]\cup r_n C_n\cup [z_{n,2},r_n z_{n,2}]
  $$
  where $r_n=\max\{|z_{n,1}|,|z_{n,2}|\}$. Notice that $\Gamma_n$ is  connected. Consider the continuous  function $l:\Gamma_n\to \R$ given by $l(z)=d(f(z),A_1)$. Since $l(z_{n,1})<k/4$ and $l(z_{n,2})\geq d(w_2,A_1)-d(z_{n,2},w_2)\geq k-k/4=3k/4$. So there is $\alpha_n\in  \Gamma_n$ such that $d(f(\alpha_n),A_1)=k/2$. Since $|\alpha_n|\geq \min\{|z_{n,1}|,|z_{n,2}|\}$, we can take a subsequence such that $\alpha_{n_k}\to z\in W$ and $f(\alpha_{n_k})\to w$. Clearly, $w\in E(W)$ and $d(f(w),A_1)=k/2$. A contradiction. Hence $E(W)$ is connected. Clearly,  $E(W)$ is contained in an exceptional maximal contact arc for $(\v_t)$, for otherwise $f^{?1}$ would map points of $\de\D\setminus E(\v_t)$ into $E(\phi_t)$. \end{proof}

  \begin{example} Let $\Omega_1=\{ z\in \strip: \Im z >0\}$ and $h_1:\D\to \Omega_1$  a Riemann map. Consider the semigroup $(\phi_t)$ defined by $\phi_t(z):=h_1^{-1}(h_1(z)+it)$, $t\geq 0$. The arc $W=h_1^{-1}([0,\infty)i)$ is an exceptional maximal contact arc for $(\phi_t)$. Define  $\tau:\strip\to\strip$ as $\tau(x+iy):=x+i(y-1/x)$, 
  $
  \Omega_2:=\tau(\Omega_1)=\{ z\in \strip: \Im z \, \Re z>-1\}
  $
 and let $h_2:\D\to \Omega_2$ be  a Riemann map. The semigroup  $(\v_t)$ defined by $\varphi_t(z):=h_2^{-1}(h_2(z)+it)$, $t\geq 0$, is topological conjugated to $(\phi_t)$ via the homeomorphism $f=h_2^{-1}\circ   \tau\circ h_1$. The semigroup $(\v_t)$ has  an exceptional maximal contact arc $\tilde W=h_2^{-1}(\R i)$ with initial point a fixed point $x_0$. Notice that for all $w\in W$ it holds  $x_0=\lim_{z\to w}f(z)$, namely, the map $f$ sends the arc $W$ to the point $x_0$. This does not contradict the previous proposition, since $\tilde W$ is  the $\omega$-limit of $f$ at the Denjoy\,--\,Wolff point of $(\phi_t)$ .
\end{example}

\section{Elliptic case}\label{passtoelliptic}

In this final section we show  how to recover the results of Sections \ref{estensionenon} and \ref{tinvariants} in case of elliptic semigroups which are not groups. The key is to replace Lemma \ref{taucon} by the following:

\begin{lemma}
Let $(\phi _{t})$ and $(\varphi _{t})$ be two elliptic semigroups
in $\mathbb{D}$, which are not  groups. Let $(\mathbb{C},h_{1},z\mapsto e^{\lambda _{1}t}z)$ be a
holomorphic model of $(\phi _{t})$ and let $(\mathbb{C},h_{2},z\mapsto
e^{\lambda _{2}t}z)$ be a holomorphic model of $(\varphi _{t}).$ Then, $%
(\phi _{t})$ and $(\varphi _{t})$ are topologically conjugated if and only
if there exist a homeomorphism $u$ of the unit circle $\partial \mathbb{D}$
and a continuous map $v:\partial \mathbb{D\rightarrow (}0,\mathbb{+\infty )}$
such that $\tau :=\theta _{\lambda _{2}}^{-1}\circ \tau _{0}\circ \theta
_{\lambda _{1}}$ satisfies $\tau (h_{1}(\mathbb{D}))=h_{2}(\mathbb{D})$,
where%
\[
\tau _{0}(z):=\left\{
\begin{array}{ll}
0, & z=0 \\
|z|u(z/|z|)v(z/|z|), & z\neq 0%
\end{array}%
\right. ,\text{ }z\in \mathbb{C},
\]
and, given $\lambda =a+ib$ with $a<0,$%
\[
\theta _{\lambda }(z):=z|z|^{-(1+1/a)}\exp \left( -i\frac{b}{a}\mathrm{Log}%
|z|\right) ,z\in \mathbb{C}^{\ast }\text{ and }\theta _{\lambda }(0):=0.
\]
\end{lemma}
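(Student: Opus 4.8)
The strategy is to follow the proof of Lemma~\ref{taucon} almost verbatim. By Proposition~\ref{conj-model}, the semigroups $(\phi_t)$ and $(\varphi_t)$ are topologically conjugated if and only if their holomorphic models are topologically conjugated, i.e.\ if and only if there is a homeomorphism $\sigma:\C\to\C$ with $\sigma(e^{\lambda_1 t}z)=e^{\lambda_2 t}\sigma(z)$ for all $t\ge 0$ and $\sigma(h_1(\D))=h_2(\D)$. The maps $\theta_\lambda$ are introduced precisely to straighten the linear flow $z\mapsto e^{\lambda t}z$ into the standard contraction $z\mapsto e^{-t}z$, so that the problem reduces to the normalization already performed in Proposition~\ref{top1}.

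First I would record the properties used. In polar coordinates $z=re^{i\phi}$ one has $\theta_\lambda(z)=r^{-1/a}e^{i(\phi-(b/a)\log r)}$, so $\theta_\lambda$ is $(r,\phi)\mapsto(r^{-1/a},\,\phi-(b/a)\log r)$; since $a<0$ this is a homeomorphism of $\C\setminus\{0\}$, it extends continuously by $\theta_\lambda(0):=0$ to a homeomorphism of $\C$ with an inverse of the same type, and a direct computation (implicit in Proposition~\ref{top1}) gives $\theta_\lambda(e^{\lambda t}z)=e^{-t}\theta_\lambda(z)$, equivalently $\theta_\lambda^{-1}(e^{-t}w)=e^{\lambda t}\theta_\lambda^{-1}(w)$, for all $t\in\R$. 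Likewise $\tau_0$ from the statement is, in polar coordinates, the map $(r,\phi)\mapsto(r\,v(e^{i\phi}),\,\arg u(e^{i\phi}))$, which is a homeomorphism of $\C$ fixing $0$ because $v$ is continuous and positive on the compact circle and $u$ is a circle homeomorphism; moreover it satisfies $\tau_0(e^{-t}z)=e^{-t}\tau_0(z)$ for all $t\in\R$. Chaining these three identities shows that $\tau=\theta_{\lambda_2}^{-1}\circ\tau_0\circ\theta_{\lambda_1}$ is a homeomorphism of $\C$ with $\tau\circ(z\mapsto e^{\lambda_1 t}z)=(z\mapsto e^{\lambda_2 t}z)\circ\tau$; if moreover $\tau(h_1(\D))=h_2(\D)$, then $\tau$ is a topological conjugation of the (holomorphic, hence topological) models, and Proposition~\ref{conj-model} yields the topological conjugacy of the semigroups. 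This is the ``if'' direction.

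For the converse, take $\sigma$ as above. Replacing $z$ by $e^{-\lambda_1 t}z$ shows the intertwining relation for $\sigma$ holds for all $t\in\R$, and letting $t\to+\infty$ (both flows contract $\C$ to $\{0\}$) forces $\sigma(0)=0$. Set $\tau_0:=\theta_{\lambda_2}\circ\sigma\circ\theta_{\lambda_1}^{-1}$. By the properties of $\theta_{\lambda_j}$ above, $\tau_0$ is a homeomorphism of $\C$ fixing $0$ with $\tau_0(e^{-t}z)=e^{-t}\tau_0(z)$ for all $t\in\R$; choosing $t$ with $e^{-t}=1/|z|$ gives $\tau_0(z)=|z|\,\tau_0(z/|z|)$ for $z\ne 0$. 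Hence, putting $v(\zeta):=|\tau_0(\zeta)|$ and $u(\zeta):=\tau_0(\zeta)/|\tau_0(\zeta)|$ for $\zeta\in\partial\D$, the map $\tau_0$ has exactly the form prescribed in the statement, with $v$ continuous and positive (because $\tau_0$ is injective and $\tau_0(0)=0$) and $u$ a continuous self-map of $\partial\D$. Finally $\tau=\theta_{\lambda_2}^{-1}\circ\tau_0\circ\theta_{\lambda_1}=\sigma$, so $\tau(h_1(\D))=h_2(\D)$, as desired.

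The only delicate point — the main obstacle, though a minor one — is to check that the map $u$ produced in the converse is a \emph{homeomorphism} of $\partial\D$, not merely a continuous self-map. For this I would apply the same radial-homogeneity argument to $\tau_0^{-1}$, which also commutes with $z\mapsto e^{-t}z$, writing $\tau_0^{-1}(\zeta)=\tilde v(\zeta)\,\tilde u(\zeta)$ on $\partial\D$; comparing moduli and arguments in $\tau_0^{-1}\circ\tau_0=\id$ and $\tau_0\circ\tau_0^{-1}=\id$ gives $\tilde u\circ u=u\circ\tilde u=\id_{\partial\D}$, so $u$ is invertible with continuous inverse $\tilde u$. The remaining verifications — that $\theta_\lambda$, $\theta_\lambda^{-1}$ and $\tau_0$ are genuine homeomorphisms of all of $\C$, continuity at the origin included — are routine, using $-1/a>0$ and the boundedness of $v$.
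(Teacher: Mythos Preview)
The paper states this lemma without proof, leaving it as the elliptic analogue of Lemma~\ref{taucon} to be verified by the reader using the normalization $\theta_\lambda$ already introduced in Proposition~\ref{top1}. Your argument is correct and is precisely the intended one: reduce via Proposition~\ref{conj-model} to a homeomorphism $\sigma$ of $\C$ intertwining the two linear flows, conjugate by the $\theta_{\lambda_j}$ to straighten both flows to $z\mapsto e^{-t}z$, and then read off the radial homogeneity $\tau_0(z)=|z|\,\tau_0(z/|z|)$ to extract $u$ and $v$; your check that $u$ is a genuine circle homeomorphism by applying the same decomposition to $\tau_0^{-1}$ is the right way to close the loop.
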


As already remarked, if the semigroup $(\phi_t)$ is elliptic, the set $E(\phi_t)=\emptyset$. Using the previous lemma and mimicking the proof of  Proposition \ref{estensionebuona}, one can prove the following extension result:

\begin{proposition}\label{estensionebuona_elliptic}
Let $(\phi_t)$ and $(\v_t)$ be two elliptic semigroups of holomorphic self-maps of $\D$, which are not groups. Suppose $(\phi_t)$ and $(\v_t)$ are topologically conjugated via the homeomorphism $f:\D\to \D$. Then $f$ extends to a homeomorphism
\[
f:\oD\to \oD .
\]
Moreover, for all $p\in \partial \D$ it holds $T(p)=T(f(p))$ and $f(\phi_t(p))=\v_t(f(p))$ for all $t\geq 0$. 
\end{proposition}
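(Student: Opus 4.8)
The plan is to follow, essentially line by line, the proof of Proposition~\ref{estensionebuona}, using the elliptic lemma above in place of Lemma~\ref{taucon}. First I would observe that the map $\theta_\lambda$ occurring in that lemma is (a relabeling of) the straightening homeomorphism $\v$ constructed in the proof of Proposition~\ref{top1}: it conjugates the rotation--dilation $z\mapsto e^{\lambda t}z$ to the pure dilation $z\mapsto e^{-t}z$, carrying logarithmic spirals of the first flow to honest rays. Hence, after replacing the holomorphic models $(\C,h_j,z\mapsto e^{\lambda_j t}z)$ by the topological models $(\C,\theta_{\lambda_j}\circ h_j,z\mapsto e^{-t}z)$, I may assume without loss of generality that both semigroups are starlike elliptic with flow $z\mapsto e^{-t}z$ on their model domains $\tilde Q_j=\theta_{\lambda_j}(h_j(\D))$, and that the conjugacy of models is the skew product $\tau_0(re^{i\theta})=r\,v(e^{i\theta})\,u(e^{i\theta})$, with $\tau_0(0)=0$. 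In this picture the Denjoy--Wolff point lies in the interior, $E(\phi_t)=E(\v_t)=\emptyset$, and there are no exceptional maximal contact arcs; so there is no boundary set to excise and the extension will be global.

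The two technical inputs I would then establish, mirroring the non-elliptic argument, are: (i) an elliptic analogue of Proposition~\ref{prime-exc}, namely that for \emph{every} $p\in\de\D$ one can choose a null chain $(C_n)$ representing $\hat h_1(p)$ whose outer complementary components $V_n$ (the ones not containing $C_0$) lie, for $n$ large, inside a fixed closed sector $\Sigma_{I'}:=\{z:\arg z\in I'\}$ over a compact sub-arc $I'$ of the circle --- this is where the absence of exceptional arcs and of a boundary Denjoy--Wolff point is used, replacing the role of Theorem~\ref{pavel}(4) via the modulus/argument version quoted after it; and (ii) that $\tau_0$, extended by $\tau_0(\infty)=\infty$, is uniformly continuous on each such closed sector $\overline{\Sigma_{I'}}$ in the spherical metric of $\C\mathbb P^1$. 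For (ii) the point is that $v$ is continuous and strictly positive on the compact arc $I'$, hence bounded above and bounded away from $0$ there, while $u$ is uniformly continuous on $I'$; checking uniform continuity separately near $0$, near $\infty$, and on compact annular pieces then goes through. Granting (i) and (ii), $\tau_0$ (hence $\tau=\theta_{\lambda_2}^{-1}\circ\tau_0\circ\theta_{\lambda_1}$, hence $f=h_2^{-1}\circ\tau\circ h_1$) sends the chosen null chain to a null chain with the same sector property, which determines a prime end $\hat h_2(q)$, and one concludes $\lim_{z\to p}f(z)=q$ exactly as in Proposition~\ref{estensionebuona}.

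The remaining steps are then formal repetitions of the non-elliptic case: inequivalent null chains are carried to inequivalent null chains, so the boundary extension is injective; the prime-ends convergence criterion gives continuity of the extension, and applying the same reasoning to $f^{-1}$ --- whose model conjugacy is $\tau_0^{-1}$, again a skew product of the admissible form, with angular rescaling $1/(v\circ u^{-1})$ continuous and positive --- shows the extension is a homeomorphism of $\oD$; finally the functional equation $f(\phi_t(p))=\v_t(f(p))$ follows from $\v_t(f(rp))=f(\phi_t(rp))$ by the Lindel\"of theorem applied along the curve $r\mapsto f(rp)$, and $T(p)=T(f(p))$ is immediate from it.

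I expect the main obstacle to be the pair of inputs (i) and (ii): the delicate point is that $\tau_0$ rescales moduli by the factor $v(e^{i\theta})$, so near $0$ and near $\infty$ one must make sure this does not destroy the null-chain property --- boundedness of $v$ on a \emph{compact} arc of arguments is precisely what rescues the argument, which is why one first needs the elliptic version of Proposition~\ref{prime-exc} guaranteeing that the relevant complementary components sit in a genuine closed sector.
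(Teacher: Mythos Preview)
Your proposal is correct and follows essentially the same approach as the paper: the paper itself does not give a detailed proof of Proposition~\ref{estensionebuona_elliptic} but only states that one should use the elliptic lemma in place of Lemma~\ref{taucon} and mimic the proof of Proposition~\ref{estensionebuona}, which is precisely what you do. Your outline is in fact more explicit than the paper's sketch---in particular, your identification of the two technical inputs (the sector analogue of condition~(L) in Proposition~\ref{prime-exc}, and the uniform spherical continuity of $\tau_0$ on closed sectors) is exactly the translation the paper alludes to when it remarks, after Theorem~\ref{pavel} and Proposition~\ref{compon-bound}, that the elliptic statements are obtained by replacing real and imaginary parts of $h$ with argument and modulus, and strips with angles.
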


And also, 

\begin{proposition}
Let $(\phi_t)$ and $(\v_t)$ be two elliptic semigroups in $\D$, which are not groups. Suppose $(\phi_t)$ and $(\v_t)$ are topologically conjugated via the homeomorphism $f:\D\to \D$. 
\begin{enumerate}
\item If $W$ is a maximal contact arc for $(\phi_t)$, then $f$ extends to a homeomorphism from $W$ onto $f(W)$ and $f(W)$ is a maximal contact arc for $(\v_t)$. 
\item  Let $p\in \de \D$ be a boundary  fixed point for $(\phi_t)$.  Then the unrestricted limit
\[
f(p):=\lim_{z\to p}f(z)\in \de \D
\]
exists and $f(p)\in \de\D$ is a boundary  fixed point for $(\v_t)$. Moreover, 
\begin{enumerate}
\item if $p$ is a  boundary regular fixed point for $(\phi_t)$  then  $f(p)$ is a  boundary regular fixed point for $(\v_t)$, 
\item if  $p$ is a boundary super-repelling fixed point of first type (respectively  of second type) for $(\phi_t)$ then $f(p)$ is a a boundary super-repelling fixed point of first type (respectively  of second type) for $(\v_t)$.
\end{enumerate} 
\end{enumerate}
\end{proposition}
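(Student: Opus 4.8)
The plan is to reduce both assertions to their non-elliptic counterparts, already established in Propositions~\ref{corresp-arc} and \ref{corresp-punto}, using Proposition~\ref{estensionebuona_elliptic} in place of Proposition~\ref{estensionebuona} and the elliptic analogue of Lemma~\ref{taucon} stated above in place of Lemma~\ref{taucon}. The key simplification is that an elliptic semigroup which is not a group has no exceptional maximal contact arc, so that $E(\phi_t)=E(\v_t)=\emptyset$; hence Proposition~\ref{estensionebuona_elliptic} already yields a homeomorphic extension $f\colon\oD\to\oD$ satisfying $f\circ\phi_t=\v_t\circ f$ on all of $\oD$ and $T(p)=T(f(p))$ for every $p\in\de\D$.

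For part (1) I would follow the proof of Proposition~\ref{corresp-arc} line by line. Fix $p\in W$; then $T(f(p))=T(p)>0$, so by Theorem~\ref{arcomass} the set $\{\v_s(f(p)):0<s<T(p)\}=\{f(\phi_s(p)):0<s<T(p)\}$ is an open sub-arc of a maximal contact arc $W'\subseteq f(W)$ of $(\v_t)$. If some $q\in f(W)\setminus W'$, write $f^{-1}(q)=\phi_s(x_0)$ with $x_0\in W$ and $s>0$; then Theorem~\ref{arcomass} applied to $(\v_t)$, together with the functional equation, shows that $\{f(\phi_\sigma(x_0)):0<\sigma<T(x_0)\}$ is a contact arc strictly containing $W'$, contradicting maximality. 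Hence $f(W)=W'$ is a maximal contact arc for $(\v_t)$; no exceptionality hypothesis enters here, precisely because the elliptic case admits none.

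For part (2), the unrestricted limit $f(p)\in\de\D$ exists because $f$ is a homeomorphism of $\oD$ carrying $\de\D$ onto $\de\D$. Since $p$ is a boundary fixed point we have $T(p)=\infty$, hence $T(f(p))=\infty$, and because $(\v_t)$ has no exceptional maximal contact arc this forces $f(p)$ to be a boundary fixed point of $(\v_t)$. To determine its type I would mimic the proof of Proposition~\ref{corresp-punto}, using the elliptic lemma: let $\tilde{h}_j:=\theta_{\lambda_j}\circ h_j$, so that $(\C,\tilde{h}_j,z\mapsto e^{-t}z)$ is the radial topological model of the corresponding semigroup (as in Proposition~\ref{top1}(1)), and replace the set $A(\phi_t)$ of \eqref{setA} by its elliptic analogue $\cap_{t\ge0}e^{-t}\tilde{h}_1(\D)$, whose connected components are angular sectors $\{z:a\le\arg z\le b\}$, possibly degenerating to half-lines $\{tv:t\ge0\}$; the relevant component description is the ``similar statement'' recorded right after Proposition~\ref{compon-bound}. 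The conjugacy of the radial models is the map $\tau_0$ of the elliptic lemma: it is positively homogeneous of degree one (hence it commutes with the flow $z\mapsto e^{-t}z$), it carries each ray issuing from the origin homeomorphically onto a ray via the circle homeomorphism $u$, and it satisfies $\tau_0(\tilde{h}_1(\D))=\tilde{h}_2(\D)$. Consequently $\tau_0(\cap_{t\ge0}e^{-t}\tilde{h}_1(\D))=\cap_{t\ge0}e^{-t}\tilde{h}_2(\D)$, and $\tau_0$ sends each connected component of the former set onto one of the latter, preserving the dichotomy ``empty interior (a half-line)'' versus ``nonempty interior (a genuine sector)''. This immediately gives that regular boundary fixed points correspond to sector-components and super-repelling fixed points of first type to half-line components, proving (a) and the first-type case of (b); the second-type case then follows by applying (a) and the first-type conclusion to $f^{-1}$ at $f(p)$, exactly as in the proof of Proposition~\ref{corresp-punto}.

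The one place where I expect genuine care to be required is the coordinate bookkeeping in the last step: reconciling the polar expression $\tau=\theta_{\lambda_2}^{-1}\circ\tau_0\circ\theta_{\lambda_1}$ with the ``conjugation of models'' picture, and checking that $\tau_0$ really respects the angular-sector decomposition of $A(\phi_t)$ (in particular, that half-lines go to half-lines and that no sector collapses), together with transcribing to the elliptic setting the analogues of Proposition~\ref{compon-bound} and of the no-Koebe-arcs input used there. None of this is deep, but it is exactly where the elliptic case differs from the non-elliptic one and where the argument must be written out with some attention.
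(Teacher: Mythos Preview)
Your proposal is correct and follows exactly the route the paper intends: the paper does not give an independent proof of this proposition but merely writes ``And also,'' after Proposition~\ref{estensionebuona_elliptic}, having already explained that the results of Sections~\ref{estensionenon} and~\ref{tinvariants} are recovered in the elliptic case by replacing Lemma~\ref{taucon} with the elliptic lemma and mimicking the non-elliptic arguments. Your write-up carries this out explicitly, and your closing paragraph correctly identifies the only point requiring genuine bookkeeping (the polar/spiral dictionary under $\theta_\lambda$ and the elliptic analogue of Proposition~\ref{compon-bound}), which the paper likewise leaves to the reader.
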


We end this section showing that Proposition \ref{estensionebuona_elliptic} is no longer true for groups of elliptic automorphisms. 
\begin{example} Consider the group of automorphisms $(\phi_t)$ where $\phi_t(z)=e^{it}z$ for all $t\in \R$ and $z\in \D$ and the continuous function $f:\D\to \D$ given by $f(z)=z \exp(i \ln(1-|z|))$ for all $z\in \D$. It is clear that $f$ is an homeomorphism of the unit disc with inverse function  $f^{-1}(z)=z \exp(-i \ln(1-|z|))$ and 
$$
f(\phi_t(z))=\phi_t(f(z)) \quad t\geq 0, \ z\in \D.
$$
But $f$ has no continuous extension at any point of $\partial \D$.
\end{example}

\end{document}